\documentclass[a4paper,11pt]{article}

\usepackage{hyperref}

\usepackage{soul}
\usepackage{makeidx}
\usepackage[T1]{fontenc}
\usepackage[dvips]{graphicx}
\usepackage{amsmath,amscd,amsthm}
\usepackage{amssymb}
\usepackage{tabularx}
\usepackage[dvips]{graphics}
\usepackage[latin1]{inputenc}
\graphicspath{c:/MISDOC~1}\makeindex 
\textwidth 16cm
\textheight 22cm
\topmargin 0.5cm
\oddsidemargin 0.45cm 
\evensidemargin 0.45cm
\usepackage[dvipsnames,usenames]{color}

\newtheorem{prp}{Proposition}

\newtheorem{lmm}{Lemma}
\newtheorem{thr}{Theorem}
\newtheorem{dfn}{Definition}
\newtheorem{rmr}{Remark}
\newtheorem{crl}{Corollary}
\newtheorem{ejemplo}{Example}
\newtheorem{notation}{Notation}

\renewcommand{\min}{{\rm min}}
\newcommand{\N}{\mathbb{N}}
\newcommand{\Z}{\mathbb{Z}}

\newcommand{\R}{\mathbb{R}}
\newcommand{\C}{\mathbb{C}}

\newcommand{\fkg}{\mathfrak g}

\usepackage{color}

\newcommand{\loccit}{{\it loc.\,cit.}}
\newcommand{\HP}{{\rm HP}}

\newcommand{\supp}{{\rm Supp}}
\newcommand{\hp}{{\rm hp}}
\newcommand{\rank}{{\rm rank}}

\begin{document}
	
	\begin{center} 
		
		{\Large  \bf Bracket ideals and Hilbert polynomial of filiform Lie algebras}
		
	\end{center}
	
	\begin{center}
		{\bf F.J. Castro-Jim\'{e}nez and M. Ceballos}\footnote{ {\small FJCJ: Departamento de \'Algebra e IMUS. Facultad de Matem\'aticas, Universidad de Sevilla. C/ Tarfia s/n, 41012 Seville (Spain). {\bf castro@us.es} \\ MC: Dpto. de Ingenier\'{\i}a. Universidad Loyola Andaluc\'{\i}a, Av. de las Universidades 2, Dos Hermanas, 41704 Seville (Spain).  {\bf mceballos@uloyola.es}}} \end{center}

		\begin{abstract} 
			For a complex finite-dimensional filiform Lie algebra $\fkg$, we first study the bifiltration given by the bracket ideals $[C^k\fkg,C^\ell\fkg]$ and then the behavior of its associated bivariate Hilbert polynomial. This behavior depends in particular on two numerical invariants that measure, on one hand, certain properties of the centralizers in $\fkg$ of the ideals in the lower central sequence and, on the other hand, the dimension of the largest abelian ideal that appears in the lower central series. We give examples proving that the Hilbert polynomial can distinguish isomorphism classes of filiform Lie algebras that cannot be distinguished by the two aforementioned invariants.
		\end{abstract}
		
		\vspace{0.3cm}
		
		\noindent {\bf Keywords:} Filiform Lie algebra, Lie algebra invariants, bracket ideal, Hilbert polynomial.
		
		\noindent {\bf  2010  Mathematics Subject Classification:} 17B30, 17--08, 17B05, 13P10, 68W30. 
		
		\section*{Introduction} \label{intro}
		
		For a complex Lie algebra $\fkg$, with Lie bracket $[\, , ]$, and an integer $k\geq 1$, we denote by $C^k\fkg$ the $k$-th ideal in the {\em lower central series} of $\fkg$, i.e. $C^1\fkg = \fkg$ and $C^k\fkg = [C^{k-1}\fkg,\fkg]$ for $k\geq 2.$ One has $[C^{k}\fkg,C^\ell\fkg] \subseteq C^{k+\ell}\fkg$ for $k,\ell \in \N$, $k,\ell \geq 1$ and the family of {\em bracket ideals} $[C^{k}\fkg,C^\ell\fkg]$ is called the {\em bracket bifiltration} of $\fkg$,  indexed by the semigroup $\Z_{\geq 1} \times \Z_{\geq 1}$; see Section \ref{hp_subsection_definition}. 
		
		A Lie algebra $\fkg$ is said to be {\em filiform} (\cite[III.1]{VEthesis}, \cite[Sect. 1.5]{VE}) if $\fkg$ has finite dimension $n\geq 2$ and $\dim(C^k\fkg)=n-k$ for $k=2,\ldots,n$. In particular, any filiform Lie algebra is nilpotent, and its nilpotency class equals its dimension. The set of filiform Lie algebras of a given dimension is a nonempty Zariski open subset of the algebraic set of nilpotent Lie algebras of the same dimension (\cite[III.1]{VEthesis}. If $\{e_1,\ldots,e_n\}$ is a basis of $\C^n$, the equalities $[e_i,e_h] = e_{h-1}$ for $3\leq h\leq n$ define a Lie algebra structure on $\C^n$, the remaining brackets being 0. This Lie algebra is filiform (\cite[III.1]{VEthesis}, \cite[4.4]{VE}) and it is  denoted $\fkg_0$ (or $\fkg_0^n$ if one wants to emphasize its dimension). It is often called the {\it model}  filiform Lie algebra of the given dimension. For $n=2,3,4$ the only filiform (up to isomorphism) Lie algebra is $\fkg_0$ (see [III.6 {\loccit}]).

		In \cite[III.4]{VEthesis} the author proves that if $\fkg$ is a filiform Lie algebra of dimension $n\geq 4$ then the bracket ideal $[C^2\fkg, C^{n-3}\fkg]$ is $\{0\}$ if $n$ is odd (i.e. the inclusion $[C^2\fkg, C^{n-3}\fkg] \subset C^{n-1}\fkg$ is strict since the last vector space has dimension 1). In {\loccit} it is also proved that when $n$ is even there are non-isomorphic, $n$-dimensional  filiform Lie algebras $\fkg,\fkg'$ satisfying $[C^2\fkg, C^{n-3}\fkg] = C^{n-1}\fkg$ and $[C^2\fkg', C^{n-3}\fkg'] = \{0\}.$ In this regard, the reader can see 
		Sections \ref{(4,5,8)} and \ref{(5,7,10)}.
		
		The proofs of previous results use the existence of a so-called {\it adapted basis} for any filiform Lie algebra.  
		A basis $\{e_1,\ldots,e_n\}$ of  a  filiform Lie algebra $\mathfrak{g}$ is called {\em adapted}, see \cite[III.2]{VEthesis}, \cite[Sec. 4.2]{VE}, if the following relations hold\footnote{To simplify notations, the definition we give here is slightly different from,  although  equivalent to,  the original one given in {\loccit}
		}
		\begin{equation} \label{eq2}\begin{array}{ll}
				\ [e_1, e_h] = e_{h-1} & \,\,\, {\rm for} \,\,\, 3 \leq h \leq n, \\
				\ [e_2, e_h] = 0 & \,\,\, {\rm for} \,\,\, 1 \leq h \leq n, \\
				\ [e_3,e_h] = 0 & \,\,\,{\rm for} \,\,\, 2 \leq h \leq n.
			\end{array}
		\end{equation}
		As proved in {\loccit} any filiform Lie algebra admits an adapted basis (the proof uses that the base field is infinite).

		In the present paper, using some results in {\loccit} and in \cite{CNT}, \cite{EGN}, and \cite{ENR1}, we prove several results on the bifiltration given by the bracket ideals $[C^k\fkg, C^{\ell}\fkg]$. 
		This bifiltration is encoded in the Hilbert polynomial $\HP_\fkg(t,s)$ of the given Lie algebra $\fkg$, see Subsection \ref{hp_subsection_definition}.
		
		We also use two numerical invariants, modulo isomorphism of Lie algebras, introduced in \cite{ENR1}. Both are defined for non--model filiform Lie algebras, and then we assume $\dim \fkg \geq 5$. 
		
		The first numerical invariant $z_1=z_1(\fkg)$ is defined as
		\begin{equation}\label{defz1} 
			z_1 = {\rm max} \{k \in {\mathbb N} \,| \,C_{\mathfrak{g}}
			(C^{n-k+2}\fkg) \supsetneq C^2\fkg \}
		\end{equation}
		where $C_{\mathfrak{g}}
		(\mathfrak{h})$ is the centralizer of a given Lie subalgebra $\mathfrak{h}$ of $\mathfrak{g}$, that is, the set of elements in $\fkg$ whose bracket with any element of $\mathfrak{h}$ is zero.

		The invariant $z_2=z_2(\fkg)$  is defined as  
		\begin{equation}\label{defz2}
			z_2 = \, {\rm max} \, \{ k
			\in {\mathbb N} \,|\, C^{n-k+1}\fkg\,\, {\rm is} \,\,
			{\rm abelian}\}.
		\end{equation} By definition, $C^{n-z_2+1}\fkg$
		is the largest abelian ideal in the lower central series of $\fkg$. These two invariants satisfy the following relations, see \cite[Th. 15]{ENR1}:
		\begin{equation}\label{Bound}
			4 \leq z_1 \leq z_2 < n \leq 2z_2-2.
		\end{equation}
		
		Given a $n$-dimensional non--model filiform Lie algebra $\fkg$, we use the triple $(z_1,z_2,n)$ to summarize the information about both invariants and the dimension of $\mathfrak g$. We say that $(z_1,z_2,n)$ is the triple associated with $\mathfrak g$. 
		
		Algebras with different triples are non--isomorphic. If one fixes an adapted basis $\{e_1,\ldots,e_n\}$ of $\fkg$ then one has 
		\begin{equation}\label{z_1-z_2-equivalent-def}
			z_1 = \, \min
			\{k \geq 4 \, | \, [e_k,e_n] \neq 0\},  \ \ \ \  z_2 = \, \min \{k \geq 4 \, | \, [e_k,e_{k+1}] \neq 0\}.
		\end{equation}
		Moreover, with respect to an adapted basis, the  ideals of the lower central series are given by  $C^1\, \fkg=\fkg$ and
		\begin{equation}\label{coroderivada} C^k \, \fkg = \langle e_2, \ldots, e_{n-k+1}  \rangle   \end{equation} where $2 \leq k \leq n-1$ and the angle brackets mean the  $\C$--vector space generated by the corresponding vectors. This is proved in \cite[III.2. Proposition]{VEthesis}.

		The content of this paper is as follows. In Section \ref{previousresults} we recall Theorem \ref{leygeneral} from \cite{CCN} and \cite{ENR2}, which describes the brackets of non--model filiform Lie algebras of dimension $n\geq 5$ with fixed invariants $(z_1,z_2)$. Previous theorem generalizes \cite[Lemme 1]{Bra}. 
		
		The parameters appearing in Theorem \ref{leygeneral} are subject to the polynomial relations deduced from the Jacobi identities and to certain open conditions imposed by the definition of invariants $z_1, z_2$; that is, the open conditions that come from inequalities $[e_{z_1},e_n] \neq 0$ and $[e_{z_2}, e_{z_2+1}]\neq 0$.
		
		Section \ref{main-results} contains the main results of this paper. We first collect, in Sections \ref{hp_subsection_definition} and \ref{theta-vector}, several properties of the bracket ideals $[C^k\fkg, C^\ell \fkg]$, the Hilbert polynomial $\HP_\fkg(t,s)$, and the $\theta$--vector associated with a filiform Lie algebra $\fkg$.
		
		In Section \ref{homogeneidad} we study a homogeneity property of the structure constants of a filiform Lie algebra associated with a given triple $(z_1,z_2,n)$. We include its proof for the sake of completeness. 
		This homogeneity result is then used in Corollaries \ref{ley(n-2,n-2,n)normalizada} and \ref{ley(z_1,n-2-n)normalizada}.
		
		Section \ref{Description} is devoted to the study of filiform Lie algebras $\fkg$ with invariant $z_2(\fkg)$ equal to $n-2$. Theorem \ref{isomorfia(n-2,n-2,n)} proves that there only exist two isomorphism classes of non-model filiform Lie algebras with triple $(n-2,n-2,n)$ for $n \geq 6$. When $z_1(\fkg) < z_2(\fkg)=n-2$, Proposition \ref{caso(z_1,n-2,n)} makes Theorem \ref{leygeneral} more precise proving, in particular, the vanishing of certain structure constants of the law of $\fkg$ or equivalently the nullity of certain bracket ideal.
		
		The computation of the Hilbert polynomial of a $n$--dimensional filiform Lie algebra $\fkg$ with $z_2(\fkg)=n-2$ is done in Section \ref{hilbert_z2=n-2}. Theorem \ref{Hp(z_1,n-2,n)} and Corollary \ref{n-z1-p-1-classes} state that the Hilbert polynomial can distinguish among a finite number of isomorphism classes of filiform Lie algebra with triple $(z_1,n-2,n)$ and $z_1<n-2$. The number of these classes tends to infinity when $n-z_1$ does. 
		
		In Section \ref{sporadicHP} we compute the Hilbert polynomials of certain families of filiform Lie algebras of dimension $n=8,9, 10$, with $z_2=n-3$ in all cases. We show that for $n=8,10$ (resp. $n=9$) the Hilbert polynomial distinguishes (resp. does not distinguish) isomorphism classes of the corresponding filiform Lie algebras.
		
		Certain computations have been performed using computer algebra systems {\tt Maple}, {\tt Singular} and  {\tt Macaulay2}, \cite{maple, Singular, Macaulay2}. For basic concepts and the usual notation on Lie algebras, we have followed, without specific reference to, the references \cite{GK,JAC,Serre,Var98}.

\section{Previous results}\label{previousresults} 

Unless stated otherwise, all considered filiform Lie algebras are non-model. In \cite{CCN},  following \cite[Theorems 3 and 5]{ENR2}, the authors gave the law of any filiform Lie algebra associated with the triple $(z_1,z_2,n)$ with respect to an adapted basis. For the convenience of the reader, we now recall here that result:

	\begin{thr}{\rm {\cite[Theorem 2]{CCN}}} \label{leygeneral}  Let $\mathfrak{g}$ be a $n$-dimensional filiform Lie algebra with associated triple $(z_1,z_2,n)$ and let $\{e_1, \ldots, e_n\}$ be an adapted basis of $\fkg$. Then, there exist complex numbers $\alpha_i$, $\gamma_j$ and $\beta_{k \ell}$, with $1 \leq i \leq z_2-z_1+1$, $1 \leq j \leq n-z_2-1$, $2 \leq \ell \leq n-z_{2}$, and $1 \leq k < z_{2}-z_{1}+\ell$, such that
		\begin{align*} [e_1,e_h]&= e_{h-1} \mbox{ \rm{ for }} 3 \leq h \leq n, \\
			[e_{z_1+i},e_{z_2+1}]&=\alpha_1 e_{i+2}+\alpha_2 e_{i+1}+\cdots+\alpha_{i+1} e_2 \,\, \mbox{ \rm{ for }} 0 \leq i\leq z_2-z_1, \\
			[e_{z_1},e_{z_2+j}]&=\alpha_1 e_{j+1}+\gamma_1\, e_j+ \cdots+\gamma_{j-1}\,e_2 \,\,  \mbox{ \rm{ for }} 2 \leq j \leq n-z_2,  \\
			[e_{z_{1}+k},e_{z_{2}+\ell}]&= \sum_{h=2}^{k+\ell} P_h\left([e_{z_{1}+k-1},e_{z_{2}+\ell}] +[e_{z_{1}+k},e_{z_{2}+\ell-1}]\right) e_{h+1}+ \beta_{k \ell} \, e_2, \\ &\phantom{= }{\mbox{\rm{\ for }}} 2 \leq \ell \leq n-z_{2},\,\,  1 \leq k < z_{2}-z_{1}+\ell.\\
		\end{align*}	
	\end{thr}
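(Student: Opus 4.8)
The plan is to recover every bracket from four ingredients: the adapted-basis relations \eqref{eq2}; the description \eqref{coroderivada} of the lower central series, which in particular gives $C^2\fkg=[\fkg,\fkg]=\langle e_2,\ldots,e_{n-1}\rangle$; the characterizations \eqref{z_1-z_2-equivalent-def} of $z_1$ and $z_2$; and the Jacobi identity, used only with $e_1$ as one of the three arguments. The point is that $S:=\mathrm{ad}(e_1)$ acts on the adapted basis by $Se_h=e_{h-1}$ for $3\le h\le n$ and $Se_1=Se_2=0$, so that on $\langle e_2,\ldots,e_{n-1}\rangle$ it is a single nilpotent Jordan block with kernel $\langle e_2\rangle$; and the Jacobi identity for $(e_1,e_a,e_b)$ becomes the Leibniz rule
\begin{equation}\label{leib}
S[e_a,e_b]=[e_{a-1},e_b]+[e_a,e_{b-1}]\qquad(3\le a,b\le n).
\end{equation}
Hence, once $S[e_a,e_b]$ has been computed, $[e_a,e_b]$ is determined up to adding a scalar multiple of $e_2$ (its only freedom, since $[e_a,e_b]\in C^2\fkg$ has vanishing $e_1$- and $e_n$-components). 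As the first line of the theorem is part of \eqref{eq2}, it remains to determine $[e_a,e_b]$ for $a,b\ge 2$.

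First I would establish two vanishing statements. \textbf{(A)} $[e_a,e_b]=0$ for every $b$ whenever $2\le a\le z_1-1$: this is \eqref{eq2} when $a\in\{2,3\}$, and the full statement follows by descending induction on $b$ (for all such $a$ simultaneously), the base $b=n$ being \eqref{eq2} together with $[e_a,e_n]=0$ for $4\le a<z_1$ from \eqref{z_1-z_2-equivalent-def}, and the step coming from \eqref{leib}. \textbf{(B)} $[e_a,e_b]=0$ whenever $2\le a<b\le z_2$: induction on $d=b-a$, the case $d=1$ being \eqref{eq2} and \eqref{z_1-z_2-equivalent-def}, and the step obtained by applying \eqref{leib} to $[e_{a+1},e_{a+d}]$, which vanishes by the inductive hypothesis. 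By skew-symmetry, (A) and (B) show that the only brackets that can be nonzero are $[e_1,e_h]$ and those $[e_a,e_b]$ with $z_1\le a<b$ and $b\ge z_2+1$; these are exactly the ones occurring in the last three displayed formulas, which we must now prove.

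Next I would treat the two extreme brackets $[e_{z_2},e_{z_2+1}]$ and $[e_{z_1},e_n]$. Applying \eqref{leib} to $[e_{z_1},e_{z_2+1}]$ and invoking (A) and (B) gives $S[e_{z_1},e_{z_2+1}]=0$, so $[e_{z_1},e_{z_2+1}]=\alpha_1e_2$ for some $\alpha_1\in\C$. Iterating \eqref{leib} with (A) and (B) yields the ``sliding'' relations $[e_{z_1},e_{z_2+j}]=S^{\,n-z_2-j}[e_{z_1},e_n]$ for $1\le j\le n-z_2$ and $[e_{z_1+i},e_{z_2+1}]=S^{\,z_2-z_1-i}[e_{z_2},e_{z_2+1}]$ for $0\le i\le z_2-z_1$; specializing to $j=1$ and $i=0$ gives $S^{\,n-z_2-1}[e_{z_1},e_n]=\alpha_1e_2=S^{\,z_2-z_1}[e_{z_2},e_{z_2+1}]$. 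Reading these off in the basis forces $[e_{z_1},e_n]$ to be supported on $e_2,\ldots,e_{n-z_2+1}$ with coefficient of $e_{n-z_2+1}$ equal to $\alpha_1$, and $[e_{z_2},e_{z_2+1}]$ to be supported on $e_2,\ldots,e_{z_2-z_1+2}$ with leading coefficient $\alpha_1$. Naming the other coefficients $\gamma_1,\ldots,\gamma_{n-z_2-1}$, respectively $\alpha_2,\ldots,\alpha_{z_2-z_1+1}$, and applying the appropriate powers of $S$ then produces the second and third formulas of the theorem, and in passing shows that $[e_{z_1+i},e_{z_2+1}]$ is supported on $e_2,\ldots,e_{i+2}$ and $[e_{z_1},e_{z_2+j}]$ on $e_2,\ldots,e_{j+1}$.

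Finally, the fourth family follows by induction on $k+\ell$. For $k\ge 1$ and $\ell\ge 2$, \eqref{leib} reads
\begin{equation*}
S[e_{z_1+k},e_{z_2+\ell}]=[e_{z_1+k-1},e_{z_2+\ell}]+[e_{z_1+k},e_{z_2+\ell-1}],
\end{equation*}
and by the previous steps (the second and third families, skew-symmetry) and the induction hypothesis the right-hand side is a known vector, supported on $e_2,\ldots,e_{k+\ell}$. Since $[e_{z_1+k},e_{z_2+\ell}]\in C^2\fkg=\langle e_2,\ldots,e_{n-1}\rangle$, on which $\ker S=\langle e_2\rangle$, this bracket must be the unique $S$-preimage of that vector with zero $e_2$-coefficient, namely $\sum_{h=2}^{k+\ell}P_h\!\big([e_{z_1+k-1},e_{z_2+\ell}]+[e_{z_1+k},e_{z_2+\ell-1}]\big)e_{h+1}$, where $P_h$ extracts the coefficient of $e_h$, plus a free scalar $\beta_{k\ell}e_2$; this is the fourth formula, and it keeps $[e_{z_1+k},e_{z_2+\ell}]$ supported on $e_2,\ldots,e_{k+\ell+1}$, which closes the induction. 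I expect the real effort to lie in the bookkeeping of steps (A), (B) and the extreme-bracket analysis: at each use of \eqref{leib} one must check that the brackets appearing have already been shown to vanish or have already been computed, and one must verify that the supports produced by the $S$-relations are exactly the ones in the statement---in general strictly smaller than those coming from $[C^k\fkg,C^\ell\fkg]\subseteq C^{k+\ell}\fkg$ alone---so that the index ranges $1\le i\le z_2-z_1+1$, $1\le j\le n-z_2-1$, $2\le\ell\le n-z_2$ and $1\le k<z_2-z_1+\ell$ come out exactly rather than merely bounded.
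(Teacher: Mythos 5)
Your argument is correct, but note that the paper does not prove Theorem \ref{leygeneral} at all: it is imported verbatim from \cite[Theorem 2]{CCN} (following \cite[Theorems 3 and 5]{ENR2}), so there is no internal proof to compare against. What you have written is a sound self-contained reconstruction of the standard argument behind such laws (the one going back to Bratzlavsky's Lemme 1): treat $S=\mathrm{ad}(e_1)$ as a lowering operator whose restriction to $C^2\fkg=\langle e_2,\ldots,e_{n-1}\rangle$ is a single Jordan block with kernel $\langle e_2\rangle$, use the Jacobi identities $J(e_1,e_a,e_b)=0$ as a Leibniz rule, and determine each bracket from already-known ones up to the one-dimensional kernel ambiguity, which is exactly where the free parameters $\alpha_i,\gamma_j,\beta_{k\ell}$ come from. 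I checked the individual steps: (A) must of course be read for $b\geq 2$ (since $[e_3,e_1]=-e_2\neq 0$), which is how you use it, and (B) is just the abelianity of $\langle e_2,\ldots,e_{z_2}\rangle$ encoded in (\ref{defz2})/(\ref{z_1-z_2-equivalent-def}); the sliding relations and the support bookkeeping for $[e_{z_1},e_n]$ and $[e_{z_2},e_{z_2+1}]$ are right and do reproduce the exact index ranges of the statement; and in the induction for the fourth family the boundary cases $k=z_2-z_1+\ell-1$ (resp.\ $\ell=2$, $k=z_2-z_1+1$) make one of the two terms a self-bracket, hence zero, so the induction hypothesis is never invoked outside its range---you cover this implicitly with the appeal to skew-symmetry, but it deserves to be said explicitly. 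Compared with simply citing \cite{CCN}, your route has the small added value of also proving that all brackets not listed in the theorem vanish and that the supports are exactly those displayed, rather than merely bounded by $[C^k\fkg,C^\ell\fkg]\subseteq C^{k+\ell}\fkg$.
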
 
	
	

	\begin{rmr}\label{map-Ph} 
		The $\C$-linear map $P_h : \mathfrak{g} \rightarrow \C$, for $h=1,\ldots,n$, associates to each vector $u\in \fkg$, its $h$-th coordinate with respect to the basis $\{e_h\}_{h=1}^n$, 
		that is,  $$u = \sum_{h=1}^{n} P_h(u) e_h.$$
	\end{rmr}

	\begin{rmr}\label{Brat-lemme-1} 
		Notice on one hand that if $z_2(\fkg)=n-1$ then the only parameters appearing in Theorem \ref{leygeneral} are the $\alpha_i$ and so, in this case, that theorem is equivalent to \cite[Lemme 1]{Bra}. 
		On the other hand, in Theorem \ref{leygeneral}, and by the definition of $z_1$ and $z_2$, the brackets $[e_{z_1},e_n]$ and $[e_{z_2},e_{z_2+1}]$ are both non zero; see equalities (\ref{z_1-z_2-equivalent-def}).
	\end{rmr} 
	
	\begin{notation}\label{notacion-vectorial-parametros}
		In the sequel, we use the following notation for the parameters appearing in Theorem \ref{leygeneral}: $\underline{\alpha} =(\alpha_1,\ldots, \alpha_{z_2-z_1+1})$, $\underline{\gamma} =(\gamma_1,\ldots,\gamma_{n-z_2-1})$  and $\underline{\beta}$ for the vector with coordinates $\beta_{k,\ell}$ for $2 \leq \ell \leq n-z_{2},\,\,  1 \leq k < z_{2}-z_{1}+\ell,$ lexicographically ordered. Notice that parameters $\underline{\gamma}$  and $\underline{\beta}$ only appear when $z_2\leq n-2.$ With the notations as in Theorem \ref{leygeneral}, we say that the parameters $(\underline{\alpha},\underline{\gamma}, \underline{\beta})$ are associated with $\fkg$ with respect to the adapted basis $\{e_1,\ldots,e_n\}$. 
	\end{notation}

	\section{Main results}\label{main-results}
	

	

	\subsection{Bracket bifiltration and Hilbert polynomial} \label{hp_subsection_definition} 
	
	Let $\mathfrak{S}$ be the semigroup $\Z_{\geq 1}\times\Z_{\geq 1}$ endowed with its usual partial ordering. If $\fkg$ is a Lie algebra, we denote by $F_{(\bullet,\bullet)}(\fkg) $, or simply $F_{(\bullet, \bullet)}$, the bifiltration on $\fkg$ indexed by $\mathfrak{S}$ defined by the bracket ideals $F_{(k,\ell)}=F_{(k,\ell)}(\fkg):= [C^k\fkg, C^\ell\fkg]$ where $(k,\ell)\in \mathfrak{S}.$ 
	This {\em bracket bifiltration} is decreasing in the sense that one has $F_{(k',\ell')} \subseteq F_{(k,\ell)}$ whenever $(k,\ell) \preceq (k',\ell')$ where $\preceq$ is the natural partial ordering in $\mathfrak{S}$.
	
	
	\begin{dfn} \label{hp-def-para-nilpotentes}
		If $\fkg$ is a nilpotent Lie algebra of finite dimension, the Hilbert polynomial of $\fkg$ is the bivariate polynomial associated with the bracket bifiltration of $\fkg$, that is, the symmetric  polynomial 
		\begin{equation}\label{hp-def}
			\begin{aligned}
				\HP_\fkg	&= \HP_\fkg(t,s):= \sum_{k\geq 1,\ell\geq 1} \dim\,[C^k\fkg,C^\ell\fkg]\,t^k \,s^\ell \in \Z[t,s].
			\end{aligned}	
		\end{equation}
	\end{dfn}
	
	From now on, $\fkg$ is a filiform Lie algebra of dimension $n\geq 2$. Then one has  
	\begin{equation}\label{hp-def}
		\begin{aligned}
			\HP_\fkg	
			&=(n-2)t s \,+   \sum_{2\leq k\leq n-2} (n-k-1)(t^k \,s + t\, s^k)\\  &\phantom{xx} + \,\sum_{k\geq 2,\ell\geq 2} \dim\,[C^k\fkg,C^\ell\fkg]\,t^k \,s^\ell.\\
		\end{aligned}	
	\end{equation}
	
	We also denote  $$\HP^{(0)}_\fkg= \HP^{(0)}_\fkg(t,s)= (n-2)t s \,+ \sum_{2\leq k\leq n-2} (n-k-1)(t^k \,s + t\,s^k).$$ 
	
	
	The degree of $\HP_\fkg$ is $n-1$.

	
	\begin{notation}\label{coeff-hp-and-k*}
		We denote $\hp_{\fkg,k,\ell}= \dim\,[C^k\fkg,C^\ell\fkg]$ the coefficient of the monomial $t^ks^\ell$ of the Hilbert polynomial $\HP_\fkg.$  We write $$\HP^{(2)}_\fkg = \HP_\fkg - \HP_\fkg^{(0)}.$$ The super-index $2$ indicates that the non-zero monomials in $\HP_\fkg^{(2)}$ have degree $\geq 2$.  If $r\in \R$, we denote $r^*=n+1-r$. 
	\end{notation}

The Hilbert polynomial of $\fkg$ has a special shape.   
\begin{lmm}{\rm [Arrow shape of $\HP$]} \label{arrowshape} With previous notations one has that 
	$\hp_{\fkg,k',\ell'} \leq \hp_{\fkg,k,\ell}$ whenever $(k,\ell) \preceq (k',\ell')$ where $\preceq$ is the natural partial ordering in $\mathfrak{S}$. Moreover, $$[C^{z_2^*} \fkg,C^{z_2^*} \fkg]=\{0\}; \quad [C^k\fkg,C^{k}\fkg]=[C^k\fkg,C^{k+1}\fkg], \quad {\rm for} \,\, k \geq 2$$ and then the coefficients of the three monomials $t^ks^k,\,t^{k+1}s^k,\, t^ks^{k+1}$ in $\HP_\fkg$ are identical, that is, $\hp_{\fkg,k,k}=\hp_{\fkg,k+1,k}=\hp_{\fkg,k,k+1}$ for $k\geq 2$. 
\end{lmm}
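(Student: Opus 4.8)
The plan is to dispatch the three assertions separately, since they rest on quite different and short observations. For the monotonicity statement: if $(k,\ell)\preceq(k',\ell')$ in $\mathfrak{S}$ then $C^{k'}\fkg\subseteq C^k\fkg$ and $C^{\ell'}\fkg\subseteq C^\ell\fkg$, so (the bracket bifiltration being decreasing) $[C^{k'}\fkg,C^{\ell'}\fkg]\subseteq[C^k\fkg,C^\ell\fkg]$, and taking $\C$-dimensions gives $\hp_{\fkg,k',\ell'}\le\hp_{\fkg,k,\ell}$; this needs nothing beyond the definition. For the vanishing at $z_2^*$: since $z_2^*=n+1-z_2=n-z_2+1$, the ideal $C^{z_2^*}\fkg$ equals $C^{n-z_2+1}\fkg$, which by the definition (\ref{defz2}) of $z_2$ is the largest abelian ideal in the lower central series of $\fkg$; being abelian means exactly that the bracket of any two of its elements vanishes, i.e.\ $[C^{z_2^*}\fkg,C^{z_2^*}\fkg]=\{0\}$.

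The substantive assertion is $[C^k\fkg,C^k\fkg]=[C^k\fkg,C^{k+1}\fkg]$ for $k\ge 2$, the inclusion $\supseteq$ being clear from $C^{k+1}\fkg\subseteq C^k\fkg$. For $\subseteq$ I would use that consecutive terms of the lower central series of a filiform algebra differ by exactly one dimension: by (\ref{coroderivada}) (equivalently $\dim C^m\fkg=n-m$) there is a vector $v$ --- concretely $v=e_{n-k+1}$ in an adapted basis --- with $C^k\fkg=C^{k+1}\fkg+\C v$. Expanding $[\,C^{k+1}\fkg+\C v,\ C^{k+1}\fkg+\C v\,]$ bilinearly, discarding the term $[\C v,\C v]=\{0\}$, and merging $[C^{k+1}\fkg,\C v]$ with $[\C v,C^{k+1}\fkg]$ (which span the same subspace, by antisymmetry), one obtains
\[
[C^k\fkg,C^k\fkg]=[C^{k+1}\fkg,C^{k+1}\fkg]+[C^{k+1}\fkg,\C v]=[C^{k+1}\fkg,\ C^{k+1}\fkg+\C v]=[C^{k+1}\fkg,C^k\fkg];
\]
a further use of $[X,Y]=[Y,X]$ gives $[C^{k+1}\fkg,C^k\fkg]=[C^k\fkg,C^{k+1}\fkg]$, so taking dimensions yields $\hp_{\fkg,k,k}=\hp_{\fkg,k+1,k}=\hp_{\fkg,k,k+1}$. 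The extreme cases $k\ge n-1$ (where $C^{k+1}\fkg=\{0\}$, or the ideals involved reduce to $\langle e_2\rangle$) are immediate, all the relevant brackets being zero.

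I do not anticipate a real obstacle here: the only mildly non-formal point is the codimension-one remark used in the last step, and it is immediate from the dimension formula for the lower central series of a filiform Lie algebra; everything else is bookkeeping with bilinearity and antisymmetry of the bracket together with the definitions of $z_2$ and of the decreasing bracket bifiltration.
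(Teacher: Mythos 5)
Your proposal is correct and follows essentially the same route as the paper: monotonicity from the decreasing bracket bifiltration, the vanishing of $[C^{z_2^*}\fkg,C^{z_2^*}\fkg]$ from the definition (\ref{defz2}) of $z_2$, and the equality $[C^k\fkg,C^k\fkg]=[C^k\fkg,C^{k+1}\fkg]$ from the fact that, in an adapted basis, $C^k\fkg=\langle e_2,\ldots,e_{k^*}\rangle$ exceeds $C^{k+1}\fkg$ by the single vector $e_{k^*}=e_{n-k+1}$, whose self-bracket vanishes. Your bilinear expansion of $[C^{k+1}\fkg+\C v,\,C^{k+1}\fkg+\C v]$ just makes explicit what the paper's remark $(k+1)^*=k^*-1$ leaves to the reader.
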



\begin{proof} The monotone property $\hp_{\fkg,k',\ell'} \leq \hp_{\fkg,k,\ell}$ follows from the fact that the bracket filtration is decreasing.  The first equality follows from the definition of $z_2$, see equality (\ref{defz2}). For the second one, it is enough to prove it for $k \leq n-2$. Fix an adapted basis of $\fkg$, as in Equation (\ref{eq2}). We have $C^k\fkg = \langle e_2,\ldots,e_{k^*}\rangle $ for $k=2,\ldots,n-1$ and the proof follows since $(k+1)^*=n-k=k^*-1.$
	
\end{proof}

\subsection{The $\theta$ vector and the support of $\HP$}\label{theta-vector} Let $\fkg$ a nilpotent Lie algebra of dimension $n\geq 1$. For $k=1,\ldots,n-1$, we denote by $\theta_k(\fkg)$, or simply $\theta_k$,  the minimum $\ell$ such that $[C^{k}\fkg,C^{\ell} \fkg] =  \{0\}$. 
Thus, if $\fkg$ is filiform of dimension $n$, one has $\theta_1 = n-1$ and $\theta_{n-1}=1$. 


We call $\theta(\fkg):=(\theta_1(\fkg),\ldots,\theta_{n-1}(\fkg))$ the $\theta$--vector associated with $\fkg$. Two nilpotent Lie algebras with different $\theta$--vector are not isomorphic.

\begin{rmr}\label{z2*contraz2*-1-and-C2-C(z1*+1)} From the definition of $z_2=z_2(\fkg) $, see equation (\ref{defz2}), we have $[C^{z_2^*}\fkg,C^{z_2^*}\fkg]=\{0\} $ and  $[C^{z_2^*-1}\fkg,C^{z_2^*-1}\fkg]\not=\{0\} $. Then by Lemma \ref{arrowshape}, one has $[C^{z_2^*-1}\fkg, C^{z_2^*}\fkg]\not=\{0\} $ and so $\theta_{z_2^*}= z_2^*$.

Note that if $n\geq 4$, from inequalities (\ref{Bound}) we get $$\frac{n+2}{2} \leq z_2 \leq n-1.$$ Last inequalities are equivalent to $$2 \leq z_2^*\leq \frac{n}{2}.$$

One also has $[C^2\fkg,C^{n-2}\fkg] \subseteq C^n\fkg =\{0\}$. Moreover, from the very definition of $z_1=z_1(\fkg)$, see equation (\ref{defz1}), one has $[C^2\fkg, C^{z_1^*+1}\fkg]=\{0\}$ and so $\theta_2 \leq z_1^*+1$. Notice also that since $4\leq z_1 \leq n-1$, then $3 \leq z_1^*+1 \leq n-2$. In particular, $z_1^*+1=n-2$ is equivalent to $z_1=4$. See Corollary \ref{theta2_r0}  for more details on the values of $\theta_2$. 

\end{rmr} 



We denote $\N^* = \N\setminus \{0\}$ and $$E_\fkg= \bigcup_{k=1}^{n-1} \left((k,\theta_k)+\N^2\right).$$ The set $E_\fkg$ is stable by $\N^2$--translations, that is, $E_\fkg + \N^2 = E_\fkg$. 

We define $E^*_\fkg := (\N^*)^2 \setminus E_\fkg$; which is a finite set. Since the bracket bifiltration, see Subsection \ref{hp_subsection_definition}, is a decreasing filtration, one has $$E^*_\fkg = \supp (\HP_\fkg):=\{(k,\ell)\in \N^2\,\vert \, \hp_{\fkg,k,\ell}\neq 0\},$$ and then $\HP_\fkg$ is a fully dense bivariate polynomial, meaning that if $(k,\ell)\prec (k',\ell')$ as elements of the partially ordered semigroup $\mathfrak{S}$, and if the coefficient $\hp_{\fkg,k',\ell'} \not=0$ then $\hp_{\fkg,k,\ell} \not=0$.

If $\fkg$ and $\fkg'$ are two filiform Lie algebras and $E^*_\fkg \not= E^*_{\fkg'}$ then $\fkg$ and $\fkg'$ are non-isomorphic. 
In Subsection \ref{(4,5,8)} we prove that the $\HP$--invariant is finer than the $E$--invariant, and then also finer than the invariant $\theta$--vector. 



\subsection{Homogeneity of the structure constants} \label{homogeneidad}

First, we begin giving the following homogeneity result, which will be used in the next subsections; see 
Corollary \ref{ley(z_1,n-2-n)normalizada}. It gives a basis change over the general law indicated in Theorem \ref{leygeneral}. Recall that in this theorem the parameters $(\underline{\alpha},\underline{\gamma}, \underline{\beta})$ 
are associated with $\fkg$ relative to one of its adapted bases; see notation \ref{notacion-vectorial-parametros}.  

	
	

	
	



\begin{prp}\label{cambiobase}
	Let $\fkg$ be a $n$-dimensional filiform Lie algebra associated with the triple $(z_1,z_2,n)$. If the parameters $(\underline{\alpha},\underline{\gamma}, \underline{\beta})$ are associated with $\fkg$ with respect to an adapted basis, then, for any $\lambda \in \C^{*}=\C\setminus \{0\}$, the parameters $\lambda (\underline{\alpha},\underline{\gamma}, \underline{\beta})$ are associated with $\fkg$ with respect to a possible different adapted basis. 
\end{prp}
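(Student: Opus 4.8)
The plan is to exhibit a single, explicit rescaling of the given adapted basis that multiplies every structure constant of the non-trivial brackets by $\lambda$. Concretely, starting from an adapted basis $\{e_1,\ldots,e_n\}$ for which the parameters $(\underline{\alpha},\underline{\gamma},\underline{\beta})$ of Theorem \ref{leygeneral} are attained, I would set
\[
e'_1 := e_1, \qquad e'_i := \lambda\, e_i \quad (2 \le i \le n),
\]
and show that $\{e'_1,\ldots,e'_n\}$ is again an adapted basis of $\fkg$, now realizing the parameters $\lambda(\underline{\alpha},\underline{\gamma},\underline{\beta})$.

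First I would verify that $\{e'_i\}$ is adapted, i.e. satisfies the three families of relations in (\ref{eq2}). For $3\le h\le n$ one has $[e'_1,e'_h]=[e_1,\lambda e_h]=\lambda e_{h-1}=e'_{h-1}$; here it is essential that $e_1$ is left unscaled while both $e_h$ and $e_{h-1}$ are scaled by $\lambda$ (note $h-1\ge 2$). The relations $[e'_2,e'_h]=\lambda^2[e_2,e_h]=0$ and $[e'_3,e'_h]=\lambda^2[e_3,e_h]=0$ are immediate from the corresponding relations for $\{e_i\}$. Since $\{e'_i\}$ is obtained from $\{e_i\}$ by a mere rescaling, $[e'_k,e'_n]=\lambda^2[e_k,e_n]$ and $[e'_k,e'_{k+1}]=\lambda^2[e_k,e_{k+1}]$, so by the characterization (\ref{z_1-z_2-equivalent-def}) the new basis yields the same invariants $z_1,z_2$; thus Theorem \ref{leygeneral} applies verbatim to $\{e'_i\}$ and produces parameters $(\underline{\alpha}',\underline{\gamma}',\underline{\beta}')$ associated with $\fkg$ relative to $\{e'_i\}$, which it then remains to identify.

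The core step is to track how the coordinate functions change. For $2\le a,b\le n$ one has $[e'_a,e'_b]=\lambda^2[e_a,e_b]$, and every bracket lies in $C^2\fkg=\langle e_2,\ldots,e_{n-1}\rangle=\langle \lambda^{-1}e'_2,\ldots,\lambda^{-1}e'_{n-1}\rangle$ by (\ref{coroderivada}); hence, writing $P'_h$ for the coordinate functions of Remark \ref{map-Ph} relative to $\{e'_i\}$, one gets $P'_h(v)=\lambda^{-1}P_h(v)$ for all $v\in C^2\fkg$ and all $h\ge 2$. Substituting into the three families of brackets of Theorem \ref{leygeneral}, each right-hand side acquires a global factor $\lambda^2$ from the rescaling of $e_a,e_b$ and is then re-expressed in $\{e'_i\}$ at the cost of a factor $\lambda^{-1}$ per coordinate, so each $\alpha_i$, $\gamma_j$ and $\beta_{k\ell}$ ends up multiplied by $\lambda^2\cdot\lambda^{-1}=\lambda$. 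The one point to watch is the recursive family: there one also replaces $w:=[e_{z_1+k-1},e_{z_2+\ell}]+[e_{z_1+k},e_{z_2+\ell-1}]$ by $w':=[e'_{z_1+k-1},e'_{z_2+\ell}]+[e'_{z_1+k},e'_{z_2+\ell-1}]=\lambda^2 w$, so that $P'_h(w')=\lambda^{2}P'_h(w)=\lambda\,P_h(w)$, which is exactly what makes the identity $[e'_{z_1+k},e'_{z_2+\ell}]=\sum_{h=2}^{k+\ell}P'_h(w')\,e'_{h+1}+\lambda\beta_{k\ell}\,e'_2$ hold with the prescribed shape. Reading off the coefficients would give $(\underline{\alpha}',\underline{\gamma}',\underline{\beta}')=\lambda(\underline{\alpha},\underline{\gamma},\underline{\beta})$, proving the proposition; I expect no genuine obstacle, the only slightly delicate bookkeeping being the passage from $P_h$ to $P'_h$ in the recursive family, and since $\lambda\in\C^*$ is arbitrary every scalar multiple of the parameter vector is reached.
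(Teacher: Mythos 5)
Your proposal is correct and is essentially the paper's own proof: the paper analyzes general diagonal rescalings $f_i=\lambda_i e_i$ and then specializes to $\lambda_1=1$, $\lambda_i=\lambda$ for $i\geq 2$, which is exactly the rescaling you posit, and the subsequent bookkeeping with the coordinate maps $P_h$ versus $P'_h$ (including the recursive family) matches the paper's computation.
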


\begin{proof}
	Let $\{e_i\}_{i=1}^n$ be an adapted basis of $\fkg$ as in Theorem \ref{leygeneral}.
	We consider $f_i=\lambda_i e_i$, with $\lambda_i \in \C^{*}$\, for $i=1, \ldots, n$.         
	Note that $\{f_i\}_{i=1}^n$ is an adapted basis of $\fkg$ if and only if $\lambda_1 \, \lambda_h=\lambda_{h-1}$, $\forall 3 \leq h \leq n$ (see Equation (\ref{eq2})). This is equivalent to the condition
	\begin{equation} \label{condicion-sobre-lambda_h}
		\lambda_h=\frac{\lambda_2}{\lambda_1^{h-2}}, \quad {\rm for}\,\, 3 \leq h \leq n.
	\end{equation}
	Equality (\ref{condicion-sobre-lambda_h}) holds, in particular for $\lambda_1=1$ and $\lambda_2=\lambda_h\in \C^*$ for $h=3,\ldots,n$ and we assume previous equalities in the sequel. Write $\lambda=\lambda_2$. We now prove that the parameters $\lambda(\underline{\alpha}, \underline{\gamma}, \underline{\beta})$ are associated with $\fkg$ with respect to the adapted basis $\{f_1,\ldots,f_n\}$.
	
	
	By definition of the invariant $z_2=z_2(\fkg)$, see equalities (\ref{z_1-z_2-equivalent-def}), one has  $[f_j,f_k]=0$, for $2 \leq j < k \leq z_2$ since $\langle f_2, \ldots, f_{z_2} \rangle=
	\langle \lambda\,e_2, \ldots, \lambda\,e_{z_2} \rangle = C^{z_2^*}\fkg$.  
	Notice that the map $P_h: \fkg \rightarrow \C$ depends on the fixed basis; see Remark \ref{map-Ph}. 
	concretely, 
	If $\overline{P}_h$ is the map associated with the basis $\{f_i\}_{i=1}^n$, then,  for  $h=1,\ldots,n$ and $u\in \fkg$, we have    
	$$\overline{P}_h(u)=\lambda_h \,P(u).$$
	Bearing that in mind and according to Theorem \ref{leygeneral},
	we have 
	
	\begin{align*}
		[f_{z_1+i},f_{z_2+1}]=&\, \lambda^2\,[e_{z_1+i},e_{z_2+1}]=\lambda^2\,(\alpha_1\,e_{i+2}+\cdots+\alpha_{i+1}\,e_2)= \lambda^2\,\left(\frac{\alpha_1}{\lambda}\,f_{i+2}+\cdots+\frac{\alpha_{i+1}}{\lambda}\,f_2\right)\\
		=& \,\lambda \,\left(\alpha_1\,f_{i+2}+\cdots+\alpha_{i+1}\,f_2\right), \,\, {\rm for} \,\, 0 \leq i \leq z_2-z_1. \\
		[f_{z_1},f_{z_2+j}]=&  \,\lambda^2\,[e_{z_1},e_{z_2+j}]=\lambda^2 \,(\alpha_1 e_{j+1}+\gamma_1\, e_j+ \cdots+\gamma_{j-1}\,e_2)\\
		=& \,\lambda^2\,\left( \frac{\alpha_{1}}{\lambda}\,f_{j+1}+\frac{\gamma_1}{\lambda}\,f_j+ \cdots+\frac{\gamma_{j-1}}{\lambda}f_2  \right)=\lambda\,\left( \alpha_{1}\,f_{j+1}+\gamma_1\,f_j+ \cdots+\gamma_{j-1}\,f_2  \right), \\
		&{\rm for} \,\,\,\, 2 \leq j \leq n-z_2. \\
		[f_{z_1+k},f_{z_2+\ell}]=& \,\lambda^2\,[e_{z_1+k},e_{z_2+\ell}]\\ =& \,\lambda^2 \,\left(\sum_{h=2}^{k+\ell} P_h\left([e_{z_{1}+k-1},e_{z_2+\ell}] +[e_{z_{1}+k},e_{z_2+\ell-1}]\right) e_{h+1} \right)+\lambda^2\, \beta_{k, \ell} \, e_2 \\
		=& \,\lambda^2
		\sum_{h=2}^{k+\ell} P_h\Big( \frac{1}{\lambda^2} [f_{z_1+k-1},f_{z_2+\ell}]+\frac{1}{\lambda^2}[f_{z_1+k},f_{z_2+\ell-1}] \Big)\frac{1}{\lambda} f_{h+1}  + \lambda^2
		\frac{\beta_{k,\ell}}{\lambda} f_2 \\
		=& \,			
		\sum_{h=2}^{k+\ell} \overline{P}_h\left([f_{z_{1}+k-1},f_{z_{2}+\ell}] +[f_{z_{1}+k},f_{z_{2}+\ell-1}]\right) f_{h+1} +\lambda\,\beta_{k,\ell}f_2, \\
		& \,\, {\rm for} \,\, 2 \leq \ell \leq n-z_{2},\,\,  1 \leq k < z_{2}-z_{1}+\ell.
	\end{align*}
\end{proof}

	

\begin{rmr}\label{remarkcambiobase}
	Under the assumptions of Proposition \ref{cambiobase} or Theorem \ref{leygeneral}, $[e_{z_1},e_n]$ and $[e_{z_2},e_{z_2+1}]$ cannot be zero. Therefore, we have two possible cases. The first case occurs when $\alpha_1 \neq 0$. The second case arises when $\alpha_1=0$ and then there exist $m, q$ such that $1 < m \leq z_2-z_1$, $1 \leq q \leq n-z_2-1$, $\alpha_{m} \neq 0$, and $\gamma_{q} \neq 0$. 
	
	First, we assume that $\alpha_1 \neq 0$. Then, if we consider $\lambda=\frac{1}{\alpha_1}$ in the proof of Proposition \ref{cambiobase}, we obtain the law 
	\begin{align*} [f_1,f_h]&= f_{h-1} \mbox{ \rm{ for }} 3 \leq h \leq n, \\
		[f_{z_1+i},f_{z_2+1}]&= f_{i+2}+\frac{\alpha_2}{\alpha_1} f_{i+1}+\cdots+\frac{\alpha_{i+1}}{\alpha_1} e_2 \,\, \mbox{ \rm{ for }} 0 \leq i\leq z_2-z_1, \\
		[f_{z_1},f_{z_2+j}]&= f_{j+1}+\frac{\gamma_1}{\alpha_1}\, f_j+ \cdots+\frac{\gamma_{j-1}}{\alpha_1}\,e_2 \,\,  \mbox{ \rm{ for }} 2 \leq j \leq n-z_2,  \\
		[f_{z_{1}+k},f_{z_{2}+\ell}]&= \sum_{h=2}^{k+\ell} P_h\left([f_{z_{1}+k-1},f_{z_{2}+\ell}] +[f_{z_{1}+k},f_{z_{2}+\ell-1}]\right) \,f_{h+1}+ \frac{\beta_{k \ell}}{\alpha_1} \, f_2, \\ &\phantom{= }{\mbox{\rm{\ for }}} 2 \leq \ell \leq n-z_{2},\,\,  1 \leq k < z_{2}-z_{1}+\ell.\\
	\end{align*}
	In case that $\alpha_1=0$, we choose $\lambda=\frac{1}{\gamma_q}$ in the proof of Proposition \ref{cambiobase}. Consequently, the following brackets are obtained: 
	\begin{align*} [f_1,f_h]&= f_{h-1} \mbox{ \rm{ for }} 3 \leq h \leq n, \\
		[f_{z_1+i},f_{z_2+1}]&=\frac{\alpha_2}{\gamma_q} f_{i+1}+\cdots+
		\frac{\alpha_{i+1}}{\gamma_q} f_{2}\,\, \mbox{ \rm{ for }} 0 \leq i\leq z_2-z_1, \\
		[f_{z_1},f_{z_2+j}]&=\frac{\gamma_1}{\gamma_q}\, f_j+  \cdots+\frac{\gamma_{q-1}}{\gamma_q}\,f_{j-q+2}+f_{j-q+1}+
		\frac{\gamma_{q+1}}{\gamma_q}\,f_{j-q}+
		\cdots+ \frac{\gamma_{j-1}}{\gamma_q} \,e_2 \\
		&  \mbox{ \rm{ for }} 2 \leq j \leq n-z_2,  \\
		[f_{z_{1}+k},f_{z_{2}+\ell}]&= \sum_{h=2}^{k+\ell} P_h\left([f_{z_{1}+k-1},f_{z_{2}+\ell}] +[f_{z_{1}+k},f_{z_{2}+\ell-1}]\right) \,f_{h+1}+ \frac{\beta_{k \ell}}{\gamma_q} \, f_2, \\ &\phantom{= }{\mbox{\rm{\ for }}} 2 \leq \ell \leq n-z_{2},\,\,  1 \leq k < z_{2}-z_{1}+\ell.\\
	\end{align*}		
	
\end{rmr}








\subsection{Description of filiform Lie algebras $\fkg$ with $z_2(\fkg)=n-2$}
\label{Description}

In this Section, we describe filiform Lie algebras $\fkg$ with $z_2(\fkg)=n-2$ and $n=\dim \fkg \geq 6$. 
Our results in this section generalize the case $z_2=n-1$ studied by Bratzlavsky \cite{Bra}.

\begin{rmr}\label{z_2yz_1estrella} 
By definition of $z_2$, 
and since $z_2=n-2$, one has $z_2^*=3$ and $C^3\fkg$ is abelian, but $C^{2}\fkg$ is not (see Equality (\ref{defz2})). 

\end{rmr}

\subsubsection{Filiform Lie algebras associated with $(n-2,n-2,n)$} \label{(n-2,n-2,n)}

We begin studying the filiform Lie algebras $\fkg$ with $z_1(\fkg)=z_2(\fkg)=n-2$. 
%


\begin{prp}\label{caso(n-2,n-2,n)new}
Assume 
$\fkg$ is a filiform Lie algebra associated with the triple $(n-2,n-2,n)$.  Then there exists an adapted basis $\{e_i\}_{i=1}^n$ of  $\fkg$ and complex parameters $\alpha,\gamma,\beta$ such that 
\begin{align*} \begin{split} \label{law(n-2,n-2,n)} [e_1,e_h]&= e_{h-1} \mbox{ \rm{ for }} 3 \leq h \leq n, \\ [e_{n-2},e_{n-1}]&=\alpha\, e_2, \\ [e_{n-2},e_n]&=\alpha\,e_3+\gamma\,e_2, \\ 
		[e_{n-1},e_n]&=\alpha\,e_4+\gamma\,e_3+\beta\, e_2. \end{split}
\end{align*} There are no closed restrictions for the parameters. The unique open restriction comes from the fact that $[C^2 \fkg, C^3 \fkg] \neq \{0\}$, that is, $\alpha \neq 0$. 
\end{prp}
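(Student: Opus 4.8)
The plan is to read off the law by specialising Theorem~\ref{leygeneral} to $z_1=z_2=n-2$, and then to determine precisely which triples $(\alpha,\gamma,\beta)$ occur. Substituting $z_1=z_2=n-2$ in the index ranges of Theorem~\ref{leygeneral} gives $z_2-z_1+1=1$ and $n-z_2-1=1$, while the conditions $2\le\ell\le n-z_2=2$ and $1\le k<z_2-z_1+\ell=2$ force $\ell=2$ and $k=1$. Hence only the three parameters $\alpha:=\alpha_1$, $\gamma:=\gamma_1$ and $\beta:=\beta_{1,2}$ survive; the first three families of brackets become $[e_1,e_h]=e_{h-1}$ for $3\le h\le n$, $[e_{n-2},e_{n-1}]=\alpha e_2$ (the instance $i=0$), and $[e_{n-2},e_n]=\alpha e_3+\gamma e_2$ (the instance $j=2$). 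For the last family I take $(k,\ell)=(1,2)$, note that $[e_{z_1+1},e_{z_2+1}]=[e_{n-1},e_{n-1}]=0$, and substitute the line just obtained, so that, by the definition of the coordinate maps $P_h$ (Remark~\ref{map-Ph}),
\[ [e_{n-1},e_n]=\sum_{h=2}^{3}P_h\bigl(\alpha e_3+\gamma e_2\bigr)e_{h+1}+\beta e_2=\gamma e_3+\alpha e_4+\beta e_2, \]
which is exactly the stated law.

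Next I would show that the only restriction on $(\alpha,\gamma,\beta)$ is the open condition $\alpha\neq0$. That $\alpha\neq0$ is forced follows from Remark~\ref{Brat-lemme-1} (equivalently from~(\ref{z_1-z_2-equivalent-def}) with $z_2=n-2$), since $[e_{z_2},e_{z_2+1}]=[e_{n-2},e_{n-1}]=\alpha e_2\neq0$; alternatively, by Remark~\ref{z_2yz_1estrella} together with Lemma~\ref{arrowshape} one has $[C^2\fkg,C^3\fkg]=[C^2\fkg,C^2\fkg]\neq\{0\}$, and, using~(\ref{coroderivada}), a glance at the law shows $[C^2\fkg,C^3\fkg]=\langle\alpha e_2\rangle$, so again $\alpha\neq0$. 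Conversely, for an arbitrary $\alpha\in\C^{*}$ and arbitrary $\gamma,\beta\in\C$ the displayed brackets satisfy the adapted-basis relations~(\ref{eq2}); an induction based on $[e_{n-k+1},e_1]=-e_{n-k}$ shows $C^k\fkg=\langle e_2,\ldots,e_{n-k+1}\rangle$, so $\dim C^k\fkg=n-k$ and $\fkg$ is filiform; and since $[e_k,e_n]=0=[e_k,e_{k+1}]$ for $4\le k<n-2$ while $[e_{n-2},e_n]\neq0$ and $[e_{n-2},e_{n-1}]\neq0$, the characterisation~(\ref{z_1-z_2-equivalent-def}) gives $z_1(\fkg)=z_2(\fkg)=n-2$. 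Thus, for $\alpha\neq0$, the displayed brackets produce exactly the filiform Lie algebras with triple $(n-2,n-2,n)$, and no polynomial equality among the parameters is imposed.

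What remains — and the only step requiring an actual computation — is to check that the Jacobi identity holds for every $(\alpha,\gamma,\beta)$, so that the polynomial relations attached to Theorem~\ref{leygeneral} are vacuous in this case. I would carry this out by a finite case analysis on basis triples, sorted by how many arguments equal $e_1$. Since any $[e_i,e_j]$ with $i,j\ge2$ lies in $\langle e_2,e_3,e_4\rangle$, since $\mathrm{ad}\,e_2$ and $\mathrm{ad}\,e_3$ kill every $e_k$ with $k\ge2$, and since the $e_4$-component of $[e_i,e_j]$ is nonzero only for $\{i,j\}=\{n-1,n\}$ — in which case the third basis vector lies in $\langle e_2,\ldots,e_{n-2}\rangle$, on which $\mathrm{ad}\,e_4$ vanishes — one gets $\bigl[[e_i,e_j],e_k\bigr]=0$ for all $i,j,k\ge2$, so every Jacobi triple with no argument equal to $e_1$ contributes zero. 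For a triple $(e_i,e_j,e_1)$ with $i,j\ge2$, the Jacobi sum can be nonzero only when one of $\{i,j\}$, $\{i-1,j\}$, $\{j-1,i\}$ is one of the special pairs $\{n-2,n-1\}$, $\{n-2,n\}$, $\{n-1,n\}$, which forces $\{i,j\}$ itself to be one of these three; a direct check then shows the sum cancels in each — trivially when $\{i,j\}=\{n-2,n-1\}$, as $-\alpha e_2$ against $\alpha e_2$ when $\{i,j\}=\{n-2,n\}$, and as $-(\alpha e_3+\gamma e_2)$ against $\alpha e_3+\gamma e_2$ when $\{i,j\}=\{n-1,n\}$. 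Triples in which $e_1$ occurs at least twice vanish trivially, which completes the verification. The main obstacle is therefore the bookkeeping of this last case distinction, which is mechanical.
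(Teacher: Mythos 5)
Your proposal is correct and follows essentially the same route as the paper: specialize Theorem \ref{leygeneral} to $z_1=z_2=n-2$ to obtain the three-parameter law, observe that the Jacobi identities impose no polynomial relations, and extract the open condition $\alpha\neq 0$ from the non-vanishing of $[e_{z_2},e_{z_2+1}]$, equivalently of $[C^2\fkg,C^3\fkg]$. The only difference is thoroughness: the paper imposes just the single identity $J(e_{n-2},e_{n-1},e_n)=0$, whereas you verify all Jacobi triples and also the converse realization (that every $\alpha\neq 0$, $\gamma$, $\beta$ does yield a filiform algebra with triple $(n-2,n-2,n)$), a point the paper leaves implicit.
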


%
\begin{proof} The law of $\fkg$ is deduced from Theorem \ref{leygeneral} writing $\alpha=\alpha_1$, $\gamma=\gamma_1$ and $\beta=\beta_{1,2}$. Recall that we assume $n=\dim \fkg \geq 6$.
The unique Jacobi identity that we must impose is $J(e_{n-2},e_{n-1},e_n)=0$, which is given by
$$J(e_{n-2},e_{n-1},e_n)=[[e_{n-2},e_{n-1}],e_n]+[[e_{n-1},e_{n}],e_{n-2}]+[[e_{n},e_{n-2}],e_{n-1}]=
$$ $$[\alpha \, e_2,e_n]+
[\alpha\,e_4 + \gamma\,e_3+\beta \,e_2,e_{n-2}]-[\alpha \,e_3+\gamma \, e_2,e_{n-1}]=0.$$ 
Consequently, there are no closed restrictions for the coefficients $\{\alpha, \gamma, \beta\}$. We know that $C^3 \fkg$ is abelian and $C^2 \fkg$ is not. According to Lemma \ref{arrowshape} and Remark \ref{z_1-z_2-equivalent-def}, \,$\{0\} \neq [C^2 \fkg, C^2 \fkg]=[C^2 \fkg, C^3 \fkg]$.  Moreover,
$$[C^2 \fkg, C^3 \fkg]=[\langle e_2, \ldots, e_{n-1} \rangle, \langle e_2, \ldots, e_{n-2}  \rangle]=\C \cdot [e_{n-2},e_{n-1}] =\C \cdot \alpha\, e_2.$$ \end{proof}

\begin{ejemplo}\label{example_z1=z2=n-2}
Let us assume that $z_1=z_2=n-2$. Then, one has $[C^2\fkg, C^4\fkg] =\{0\}$ due to the definition of $z_1$, see {\rm (\ref{defz1})}, and since $z_1^*=3$.
According to Proposition \ref{caso(n-2,n-2,n)new}, $[C^2\fkg, C^{z_1^{*}} \fkg] = [C^2\fkg,C^3\fkg]$ has dimension 1. In particular, $\theta_2=z_1^{*}+1=4$, see also Section \ref{theta-vector}. In Section \ref{hilbert_z2=n-2} we compute the Hilbert polynomial of these algebras.
\end{ejemplo}



\begin{crl}\label{ley(n-2,n-2,n)normalizada}
Let $\fkg$ be a filiform Lie algebras associated with the triple $(n-2,n-2,n)$. Then, there exists an adapted basis $\{ f_i\}_{i=1}^n$ such that the law of $\fkg$ is given by
$$[f_1,f_h]=f_{h-1}, \,\, {\rm for}\,\, 3 \leq h \leq n; \quad [f_{n-2},f_{n-1}]= f_2;\quad [f_{n-2},f_n]=f_3+\gamma\,'\,f_2;$$
$$[f_{n-1},f_n]=f_4+\gamma\,'\,f_3+\beta '\,f_2,$$ for some $\gamma\,', \beta' \in \C$.

\end{crl}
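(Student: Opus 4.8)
The plan is to start from the adapted basis $\{e_i\}_{i=1}^n$ and parameters $(\alpha,\gamma,\beta)$ provided by Proposition \ref{caso(n-2,n-2,n)new}, where $\alpha\neq 0$ is the only constraint, and then apply the rescaling of Proposition \ref{cambiobase} (equivalently, the first case of Remark \ref{remarkcambiobase}) with $\lambda = 1/\alpha$. Since the triple is $(z_1,z_2,n)=(n-2,n-2,n)$, in Notation \ref{notacion-vectorial-parametros} the vectors collapse to $\underline{\alpha}=(\alpha)$, $\underline{\gamma}=(\gamma)$, $\underline{\beta}=(\beta)$ (one checks the index ranges: $z_2-z_1+1=1$, $n-z_2-1=1$, and for $\underline\beta$ only $(k,\ell)=(1,2)$ survives). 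So Proposition \ref{cambiobase} says that for any $\lambda\in\C^*$ the parameters $\lambda(\alpha,\gamma,\beta)=(\lambda\alpha,\lambda\gamma,\lambda\beta)$ are the parameters of $\fkg$ with respect to a suitable adapted basis $\{f_i\}_{i=1}^n$.

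Concretely, I would take $f_i=\lambda_i e_i$ with $\lambda_1=1$, $\lambda_2=\cdots=\lambda_n=\lambda=1/\alpha$, which by Equation (\ref{condicion-sobre-lambda_h}) is indeed an adapted basis. Substituting into the three nontrivial brackets of Proposition \ref{caso(n-2,n-2,n)new} and using $[f_i,f_j]=\lambda^2[e_i,e_j]$ together with $e_k = \lambda^{-1} f_k$ for $k\geq 2$, one gets $[f_{n-2},f_{n-1}]=\lambda^2\alpha\,e_2=\lambda\alpha\,f_2=f_2$, then $[f_{n-2},f_n]=\lambda^2(\alpha e_3+\gamma e_2)=\lambda\alpha f_3+\lambda\gamma f_2 = f_3+(\gamma/\alpha)f_2$, and similarly $[f_{n-1},f_n]=f_4+(\gamma/\alpha)f_3+(\beta/\alpha)f_2$. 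Setting $\gamma'=\gamma/\alpha$ and $\beta'=\beta/\alpha$ gives exactly the asserted law, and the relations $[f_1,f_h]=f_{h-1}$ are preserved because the basis is adapted.

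There is essentially no obstacle here: the statement is a direct specialization of Proposition \ref{cambiobase} / Remark \ref{remarkcambiobase} to the triple $(n-2,n-2,n)$, and the only thing to be careful about is bookkeeping — confirming that the index ranges in Theorem \ref{leygeneral} really do reduce to the three brackets listed (so that no other parameters need rescaling) and that $\alpha\neq 0$, guaranteed by Proposition \ref{caso(n-2,n-2,n)new}, makes the choice $\lambda=1/\alpha$ legitimate. One may also note that the Jacobi identity need not be re-examined, since $\{f_i\}$ is obtained from $\{e_i\}$ by an algebra automorphism (a diagonal change of basis), so the bracket defined by the new structure constants is automatically a Lie bracket. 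Hence the proof is just the short computation above.

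\begin{proof}
By Proposition \ref{caso(n-2,n-2,n)new} there is an adapted basis $\{e_i\}_{i=1}^n$ of $\fkg$ and complex numbers $\alpha,\gamma,\beta$ with $\alpha\neq 0$ such that $[e_1,e_h]=e_{h-1}$ for $3\leq h\leq n$, $[e_{n-2},e_{n-1}]=\alpha e_2$, $[e_{n-2},e_n]=\alpha e_3+\gamma e_2$ and $[e_{n-1},e_n]=\alpha e_4+\gamma e_3+\beta e_2$. For the triple $(z_1,z_2,n)=(n-2,n-2,n)$ these are the parameters $(\underline{\alpha},\underline{\gamma},\underline{\beta})=(\alpha,\gamma,\beta)$ associated with $\fkg$ relative to $\{e_i\}$ in the sense of Notation \ref{notacion-vectorial-parametros}. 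Apply Proposition \ref{cambiobase} with $\lambda=1/\alpha\in\C^*$: the parameters $\tfrac{1}{\alpha}(\alpha,\gamma,\beta)=(1,\gamma/\alpha,\beta/\alpha)$ are associated with $\fkg$ with respect to some adapted basis $\{f_i\}_{i=1}^n$. Writing $\gamma'=\gamma/\alpha$ and $\beta'=\beta/\alpha$, and unwinding Notation \ref{notacion-vectorial-parametros} together with the law in Proposition \ref{caso(n-2,n-2,n)new}, this means
\[
[f_1,f_h]=f_{h-1}\ \text{ for } 3\leq h\leq n,\quad [f_{n-2},f_{n-1}]=f_2,\quad [f_{n-2},f_n]=f_3+\gamma' f_2,
\]
\[
[f_{n-1},f_n]=f_4+\gamma' f_3+\beta' f_2,
\]
which is the claimed law.
\end{proof}
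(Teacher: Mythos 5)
Your proof is correct and follows exactly the paper's argument: apply Proposition \ref{caso(n-2,n-2,n)new} to obtain parameters $(\alpha,\gamma,\beta)$ with $\alpha\neq 0$, then rescale via Proposition \ref{cambiobase} with $\lambda=\alpha^{-1}$ and set $(\gamma',\beta')=\alpha^{-1}(\gamma,\beta)$. The extra bookkeeping you include (checking the parameter ranges collapse and the explicit bracket computations) is just an expanded version of the same two-step argument the paper gives.
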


\begin{proof} 
We first apply Proposition \ref{caso(n-2,n-2,n)new}. Then, we apply Proposition \ref{cambiobase} with $\lambda=\alpha^{-1}$ and write $(\gamma\,',\beta')=\alpha^{-1}(\gamma,\beta)$.

\end{proof}

Moreover, we have also studied the different isomorphism classes for this family of filiform Lie algebras associated with the triple $(n-2,n-2,n)$.  

\begin{thr}\label{isomorfia(n-2,n-2,n)}
There exist only two non-isomorphic filiform Lie algebras associated with the triple $(n-2,n-2,n)$ for $n \geq 6$. These two algebras are given by the following non-zero brackets:
\begin{itemize}
	\item[1)] $[e_1,e_h]=e_{h-1}, \,\, h=3,\ldots,n; \,\, [e_{n-2},e_{n-1}]=e_2; \,\, [e_{n-2},e_n]=e_3+e_2; \,\, [e_{n-1},e_n]=e_4+e_3$. 
	\item[2)] $[e_1,e_h]=e_{h-1}, \, h=3,\ldots,n; \,\, [e_{n-2},e_{n-1}]=e_2; \,\, [e_{n-2},e_n]=e_3; \,\, [e_{n-1},e_n]=e_4.$
\end{itemize}
\end{thr}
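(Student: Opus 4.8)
The plan is to start from the normalized law given in Corollary \ref{ley(n-2,n-2,n)normalizada}: for a filiform Lie algebra $\fkg$ with triple $(n-2,n-2,n)$ there is an adapted basis $\{f_i\}$ in which the only nonzero brackets are $[f_1,f_h]=f_{h-1}$ for $3\leq h\leq n$, together with $[f_{n-2},f_{n-1}]=f_2$, $[f_{n-2},f_n]=f_3+\gamma'f_2$, and $[f_{n-1},f_n]=f_4+\gamma'f_3+\beta'f_2$, for some $\gamma',\beta'\in\C$. So every such algebra is captured by the pair of parameters $(\gamma',\beta')$, and it suffices to show that (i) every $(\gamma',\beta')$ is isomorphic to either $(1,1)$-type or $(0,0)$-type — i.e. to one of the two algebras in the statement, after possibly renaming $f_i$ as $e_i$ — and (ii) the two listed algebras are genuinely non-isomorphic. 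Note that algebra 1) corresponds, up to scaling of the basis, to any law with $\gamma'\neq 0$, while algebra 2) is exactly $\gamma'=\beta'=0$ (after checking this is consistent with the Jacobi identity, which imposes no closed relations by Proposition \ref{caso(n-2,n-2,n)new}).

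For step (i), the main work is to exhibit, for an algebra with a given $(\gamma',\beta')$, an adapted basis change landing on a canonical pair. The first reduction is to kill $\beta'$: I would look for a basis change of the form $e_i = f_i + (\text{correction terms in lower-index }f_j)$, adjusting $e_n$ (and the induced changes on $e_{n-1},e_{n-2},\dots$ via the adaptedness relations $[e_1,e_h]=e_{h-1}$) by a multiple of some $f_j$ so that the $e_2$-coefficient in $[e_{n-1},e_n]$ is shifted. Concretely, replacing $f_n$ by $f_n + c f_3$ changes $f_{n-1}\mapsto f_{n-1}+cf_2$, hence $[f_{n-1},f_n]$ picks up $c[f_2,f_n] + c[f_{n-1},f_3] + \dots = 0$ to leading order, so one must instead use a correction that interacts with $[f_{n-2},\cdot]$ or $[f_{n-1},\cdot]$ nontrivially; the right move is to modify $f_{n-1}$ or $f_{n-2}$ by a multiple of $f_n$ (which is not a central direction), producing a genuine change in $\beta'$. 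After $\beta'$ is removed, a similar targeted change should remove $\gamma'$ when possible, or else rescale it to $1$ using Proposition \ref{cambiobase} (the $\lambda$-rescaling multiplies $\gamma'$ by $\lambda$), which forces the dichotomy $\gamma'=0$ versus $\gamma'=1$. One must check at each stage that the modified basis is still adapted, i.e. still satisfies \eqref{eq2}; this is where the bookkeeping lives, but it is routine because the corrections are by vectors annihilated by $\mathrm{ad}(e_2)$ and $\mathrm{ad}(e_3)$.

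For step (ii), non-isomorphism of algebras 1) and 2), I would use an isomorphism-invariant that separates them. The cleanest candidate is a dimension count of a canonically defined subspace: for instance, compare $\dim [C^2\fkg, C^3\fkg]$ with the structure of how $C^4\fkg$ sits inside brackets, or compute the $\theta$-vector or the Hilbert polynomial $\HP_\fkg$; however, since both algebras share the same triple, the same $z_1,z_2$, and likely the same $\HP$, these coarse invariants may not suffice, so I would instead look at a finer invariant such as the dimension of the space of brackets $[x,x']$ with $x,x'$ ranging over a complement of $C^2\fkg$, or the rank of a suitable "commutator form" on $\fkg/C^2\fkg$ or on $\fkg/C^3\fkg$. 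In algebra 2) the bracket $[f_{n-2},f_n]$ lies in $C^{n-2}\fkg$ whereas in algebra 1) it has a nonzero $e_2$-component, and this should translate into a different value of some numerical invariant attached to the filtration; alternatively one can argue directly that any adapted-basis isomorphism $\fkg\to\fkg'$ must send $e_1\mapsto \mu e_1 + (\text{lower})$ and track the induced transformation on $(\gamma',\beta')$, showing the orbit of $(0,0)$ under all admissible base changes does not meet the orbit of $(1,1)$.

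The hard part will be step (i): organizing the admissible adapted-basis changes into a clean normal form and verifying that they realize exactly the two claimed orbits — in particular ruling out that some intermediate value of $\gamma'$ or $\beta'$ gives a third class. This requires understanding precisely which base changes preserve adaptedness (Proposition \ref{cambiobase} gives the diagonal ones, but the ``unipotent'' ones adding lower-index corrections also preserve \eqref{eq2} and are needed to clear $\beta'$), and then computing how the pair $(\gamma',\beta')$ transforms under the full group they generate. The second hardest point is choosing an invariant in step (ii) that is provably preserved and provably distinguishes the two algebras, since the obvious filtration invariants coincide by construction.
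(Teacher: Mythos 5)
Your skeleton is the same as the paper's: normalize via Proposition \ref{caso(n-2,n-2,n)new} and Corollary \ref{ley(n-2,n-2,n)normalizada}, reduce the pair $(\gamma',\beta')$ to two models by explicit adapted-basis changes, and then show the two models are non-isomorphic (the paper, too, only asserts this last step as ``a straightforward computation''). The problem is that the one step carrying all the weight --- the change of basis that clears $\beta'$ --- is not actually produced, and the specific move you single out as ``the right move'' cannot work. Modifying $f_{n-1}$ (resp. $f_{n-2}$) by a multiple of $f_n$ is incompatible with adaptedness: by (\ref{coroderivada}), for \emph{any} adapted basis one has $C^2\fkg=\langle e_2,\ldots,e_{n-1}\rangle$ and $C^3\fkg=\langle e_2,\ldots,e_{n-2}\rangle$, these ideals are intrinsic, and $f_n$ lies in neither of them; so $f_{n-1}+cf_n$ or $f_{n-2}+cf_n$ with $c\neq0$ can never occupy the $(n-1)$-st or $(n-2)$-nd slot of an adapted basis. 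The correction that works goes in the opposite direction, adding \emph{lower}-index vectors: with $e'_k=e_k+c\,e_{k-2}$ for $4\le k\le n$ and $e'_i=e_i$ for $i\le 3$ (which is adapted), one computes $[e'_{n-1},e'_n]=e'_4+\gamma e'_3+(\beta-2c)e'_2$ while the other brackets keep their form, so $c=\beta/2$ kills $\beta$; this is exactly the paper's map $\phi$ in the case $\gamma=0$, and (twisted by powers of $\gamma$) its map $\varphi$ in the case $\gamma\neq0$.

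A second concrete flaw is the normalization of $\gamma'$: you propose to rescale it to $1$ ``using Proposition \ref{cambiobase}'', but that proposition multiplies the \emph{whole} vector $(\underline{\alpha},\underline{\gamma},\underline{\beta})$ by the same $\lambda$, so once the leading coefficient has been normalized to $1$ by Corollary \ref{ley(n-2,n-2,n)normalizada} it cannot move $\gamma'$ without destroying that normalization. What is needed is the general diagonal adapted change with $\lambda_1\neq1$ (in the paper, $\varphi(e_1)=\gamma e_1$, $\varphi(e_2)=\gamma^{2n-7}e_2$, etc.), under which $\alpha,\gamma,\beta$ scale with the \emph{different} weights $\lambda_2\lambda_1^{-(2n-7)}$, $\lambda_2\lambda_1^{-(2n-6)}$, $\lambda_2\lambda_1^{-(2n-5)}$, which is precisely what allows $\alpha=\gamma=1$ simultaneously when $\gamma\neq0$ and confirms that $\gamma=0$ versus $\gamma\neq0$ are the only two orbits. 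Your step (ii) is a list of candidate invariants rather than an argument; since the triple and $\HP$ coincide for the two models (Remark \ref{hilbertpolynomials(n-2,n-2,n)}), the viable option is the direct computation you mention last (tracking how admissible adapted changes act on $(\gamma',\beta')$ and seeing that the $\gamma'=0$ locus is preserved), which is also all the paper offers. So the route is right, but as written the two key reductions rest on moves that either break adaptedness or do not act on the parameters as claimed.
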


\begin{proof} 
In order to simplify the notation, we will denote by $\alpha, \gamma,\beta$ the paramenters $\alpha_1,\gamma_1,\beta_{1,2}$, respectively.
Let $\fkg_{\alpha,\gamma,\beta}$ be a filiform Lie algebra associated with the triple $(n-2,n-2,n)$ and with adapted basis $\{ e_h\}_{h=1}^n$. The law of $\fkg_{\alpha,\gamma,\beta}$ is given by Proposition \ref{caso(n-2,n-2,n)new}. Notice that, as proved in \loccit, $\alpha \neq 0$. 
Moreover and according to Corollary \ref{ley(n-2,n-2,n)normalizada}, we can suppose that $\alpha=1$. Under those assumptions, we distinguish two cases. 

First, if $\gamma \neq 0$, then $\fkg_{1,\gamma,\beta}$ is isomorphic to the algebra $\fkg_{1,1,0}$ via the Lie algebra isomorphism
$\varphi:\fkg_{1,\gamma,\beta} \rightarrow  \fkg_{1,1,0} $ given by 
$$\varphi(e_1)=e'_1=\gamma e_1; \quad  \varphi(e_2)=e'_2=\gamma^{2n-7} \, e_2; \quad \varphi(e_3)=e'_3=\gamma^{2n-8} e_3;$$
$$\varphi(e_k)=e'_k=
\dfrac{\gamma^{2n-k-5} \, \beta}{2} \, e_{k-2}+\gamma^{2n-k-5} \, e_k, \quad {\rm for} \,\, 4\leq k \leq n$$
where $\{e'_h\}_{h=1}^n$ is the adapted basis of $\fkg_{1,1,0}$ given in Proposition \ref{caso(n-2,n-2,n)new}. Under these conditions, let us prove that we obtain the law given in $1)$.
$$[e'_2,e'_i]=0=[e'_3,e'_{j}]=[e'_k,e'_{\ell}], \quad \forall \, 1 \leq i \leq n, \,\, \forall \, 2 \leq j \leq n, \quad \forall \, 4 \leq k, \ell \leq n-2;$$
$$[e'_1,e'_3]=\gamma^{2n-7}\,e_2=e'_2, \quad [e'_1,e'_4]=\gamma^{2n-8}\,e_3=e'_3$$
$$[e'_1,e'_h]=\left[\gamma \, e_1, \dfrac{\gamma^{2n-h-5}\,\beta}{2}\,e_{h-2}+\gamma^{2n-h-5}\, e_h \right]=
\dfrac{\gamma^{2n-h-4}\,\beta}{2}\,e_{h-3}+\gamma^{2n-h-4}\, e_{h-1}$$
$$=e'_{h-1}, \quad {\rm for} \,\, 5 \leq h \leq n;$$
$$[e'_{n-2},e'_{n-1}]=\left[\dfrac{\gamma^{n-3}\,\beta}{2}\,e_{n-4}+\gamma^{n-3}\,e_{n-2},\dfrac{\gamma^{n-4}\,\beta}{2}\,e_{n-3}+\gamma^{n-4}\, e_{n-1}  \right]=\gamma^{2n-7}e_{2}=e'_2$$
$$[e'_{n-2},e'_{n}]=\left[\dfrac{\gamma^{n-3}\,\beta}{2}\,e_{n-4}+\gamma^{n-3}\,e_{n-2},\dfrac{\gamma^{n-5}\,\beta}{2}\,e_{n-2}+\gamma^{n-5}\, e_{n}  \right]=\gamma^{2n-8}(e_3+\gamma \, e_{2})=e'_3+e'_2$$
$$[e'_{n-1},e'_{n}]=\left[\dfrac{\gamma^{n-4}\,\beta}{2}\,e_{n-3}+\gamma^{n-4}\,e_{n-1},\dfrac{\gamma^{n-5}\,\beta}{2}\,e_{n-2}+\gamma^{n-5}\, e_{n}  \right]$$
$$=-\dfrac{\gamma^{2n-9} \beta}{2}\,e_2+\gamma^{2n-9}(e_4+\gamma\,e_3+\beta\,e_2)
=\gamma^{2n-9}e_4+\gamma^{2n-8}\,e_3+\dfrac{\gamma^{2n-9} \, \beta}{2}\,e_2=e'_4+e'_3.$$

In case that $\gamma = 0$, then $\fkg_{1,0,\beta}$ is isomorphic to the algebra $\fkg_{1,0,0}$ via the Lie algebra isomorphism
$\phi:\fkg_{1,0,\beta} \rightarrow  \fkg_{1,0,0} $ given by 
$$\phi(e_i)=e'_i=e_i, \,\, {\rm for}\,\, 1\leq i \leq 3; \quad  \phi(e_k)=e'_k=
\frac{\beta}{2} \, e_{k-2}+ e_k, \quad {\rm for} \,\, 4\leq k \leq n$$
where $\{e'_h\}_{h=1}^n$ is the basis of $\fkg_{1,0,0}$.  Under these conditions we obtain the law given in $2)$.
$$[e'_2,e'_i]=0=[e'_3,e'_{j}]=[e'_k,e'_{\ell}], \quad \forall \, 1 \leq i \leq n, \,\, \forall \, 2 \leq j \leq n, \quad \forall \, 4 \leq k, \ell \leq n-2;$$
$$[e'_1,e'_3]=e_2=e'_2, \quad [e'_1,e'_h]=\left[e_1, \frac{\beta}{2}\,e_{h-2}+ e_h \right]=
\frac{\beta}{2}\,e_{h-3}+ e_{h-1}=e'_{h-1}, \, \forall \, 4 \leq h \leq n;$$
$$[e'_{n-2},e'_{n-1}]=[\frac{\beta}{2}\,e_{n-4}+e_{n-2},\frac{\beta}{2}\,e_{n-3}+e_{n-1}]=[e_{n-2},e_{n-1}]=e_{2}=e'_2$$
$$[e'_{n-2},e'_{n}]=[\frac{\beta}{2}\,e_{n-4}+e_{n-2},\frac{\beta}{2}\,e_{n-2}+e_{n}]=[e_{n-2},e_{n}]=e_{3}=e'_3$$
$$[e'_{n-1},e'_{n}]=[\frac{\beta}{2}\,e_{n-3}+e_{n-1},\frac{\beta}{2}\,e_{n-2}+e_{n}]=-\frac{\beta}{2}e_2+e_4+\beta e_2=e_4+\frac{\beta}{2} e_2=e'_4.$$
Finally, a straightforward computation shows that no isomorphism exists between $\fkg_{1,1,0}$	and $\fkg_{1,0,0}$.

\end{proof}

\subsubsection{Filiform Lie algebras associated with $(z_1,n-2,n)$ where $z_1<n-2$} \label{(z1,n-2,n)z1<n-2}


Now, we continue analyzing the case where $z_1(\fkg)<z_2(\fkg)=n-2$. Since $z_1(\fkg)\geq 4$ we are assuming in this subsection $n=\dim(\fkg)\geq 7$. 
In the following Lemma we prove that a term in a family of Jacobi identities is always zero. 

\begin{lmm}\label{lemaJacobi}
Let $\fkg$ be a filiform Lie algebra associated with the triple $(z_1,n-2,n)$, 
with $z_1 < n-2$. 
Then, for $0 \leq i \leq n-2-z_1$ the Jacobi identity
$J(e_{z_1+i},e_{n-1},e_n)=0$  
is given by
$$[[e_{z_1+i},e_{n-1}],e_n]+[[e_{n},e_{z_1+i}],e_{n-1}]=0.$$
Moreover, both terms in the previous expression are null in case that $z_1 >4$ and $1 \leq i \leq z_1-4$. 
\end{lmm}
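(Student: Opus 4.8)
The plan is to write the Jacobi identity $J(e_{z_1+i},e_{n-1},e_n)=0$ in its full three-term form, that is, $[[e_{z_1+i},e_{n-1}],e_n]+[[e_{n-1},e_n],e_{z_1+i}]+[[e_n,e_{z_1+i}],e_{n-1}]=0$, and to prove the first assertion by showing that the summand $[[e_{n-1},e_n],e_{z_1+i}]$ vanishes for $0\le i\le n-2-z_1$; the ``moreover'' part will then follow from a finer analysis of the two remaining summands.

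For the first assertion I would use that $C^3\fkg$ is abelian: since $z_2=n-2$ we have $z_2^*=3$, so this is Remark \ref{z_2yz_1estrella}. On one hand $[e_{n-1},e_n]\in[C^2\fkg,C^1\fkg]\subseteq C^3\fkg$, because $e_{n-1}\in C^2\fkg$ and $C^1\fkg=\fkg$. On the other hand, by the description (\ref{coroderivada}) of the lower central series in an adapted basis, $e_{z_1+i}\in C^k\fkg$ exactly when $k\le n-z_1-i+1$, and the hypothesis $i\le n-2-z_1$ gives $n-z_1-i+1\ge 3$, so $e_{z_1+i}\in C^3\fkg$ as well. As both elements lie in the abelian ideal $C^3\fkg$, their bracket is $0$, which shows that $J(e_{z_1+i},e_{n-1},e_n)=0$ reduces to the stated two-term identity.

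For the ``moreover'' part, assume $z_1>4$ and $1\le i\le z_1-4$, still with $i\le n-2-z_1$ so that all the relations of Theorem \ref{leygeneral} are available. First I would bound the supports of the inner brackets. With $z_2=n-2$ one has $n-z_2=2$, so the second relation of Theorem \ref{leygeneral} gives $[e_{z_1+i},e_{n-1}]=[e_{z_1+i},e_{z_2+1}]\in\langle e_2,\ldots,e_{i+2}\rangle$, and the fourth relation, taken with $\ell=2$ and $k=i$, gives $[e_{z_1+i},e_n]=[e_{z_1+i},e_{z_2+2}]\in\langle e_2,\ldots,e_{i+3}\rangle$ (the actual values of the coefficients $P_h(\cdots)$ are irrelevant for this support bound). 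Since $i\le z_1-4$, both supports lie inside $\langle e_2,\ldots,e_{z_1-1}\rangle$. It then suffices to check that $[e_m,e_n]=0$ and $[e_m,e_{n-1}]=0$ for every $2\le m\le z_1-1$: the cases $m=2,3$ are part of the adapted-basis relations (\ref{eq2}); the vanishing $[e_m,e_n]=0$ for $4\le m\le z_1-1$ is precisely the definition (\ref{z_1-z_2-equivalent-def}) of $z_1$; and $[e_m,e_{n-1}]=0$ for $2\le m\le z_1-1$ holds because $e_m\in C^{z_1^*+1}\fkg=\langle e_2,\ldots,e_{z_1-1}\rangle$ while $e_{n-1}\in C^2\fkg$, so that $[e_m,e_{n-1}]\in[C^2\fkg,C^{z_1^*+1}\fkg]=\{0\}$ by Remark \ref{z2*contraz2*-1-and-C2-C(z1*+1)}. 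Expanding $[[e_{z_1+i},e_{n-1}],e_n]$ and $[[e_n,e_{z_1+i}],e_{n-1}]$ by linearity along these supports then makes both summands vanish.

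There is no real obstacle here beyond bookkeeping: the two points to watch are that the index $i$ remains in the ranges for which Theorem \ref{leygeneral} is stated (guaranteed by $i\le n-2-z_1$, since this yields $z_1+i\le n-2$ and $i\le n-z_1-1$), and the identification $C^{z_1^*+1}\fkg=\langle e_2,\ldots,e_{z_1-1}\rangle$, namely the largest ideal in the lower central series of $\fkg$ whose bracket with $C^2\fkg$ is already known to vanish.
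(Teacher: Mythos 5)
Your proof is correct and follows essentially the same route as the paper's: the middle Jacobi term $[[e_{n-1},e_n],e_{z_1+i}]$ vanishes because everything in sight lies where brackets vanish (the paper bounds the support of $[e_{n-1},e_n]$ via the explicit law of Theorem \ref{leygeneral}, you note both factors lie in the abelian ideal $C^3\fkg$), and for $z_1>4$, $1\le i\le z_1-4$ both remaining terms die because the inner brackets are supported on $e_2,\ldots,e_{z_1-1}$, whose brackets with $e_{n-1}$ and $e_n$ vanish by the adapted-basis relations and the definition of $z_1$. The only difference is presentational: you argue by support bounds and filtration membership where the paper expands the coefficients explicitly, which changes nothing essential.
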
 

\begin{proof}
According to Theorem \ref{leygeneral}, there are complex numbers $\alpha_i$, $\gamma_1$ and $\beta_{k,2}$ such that the law of $\fkg$ is given by
\begin{equation}\label{law(z_1,n-2,n)} 
	\begin{aligned}    [e_1,e_h]&= e_{h-1} \mbox{ \rm{ for }} 3 \leq h \leq n, \\
		[e_{z_1+i},e_{n-1}]&=\alpha_1 e_{i+2}+\alpha_2 e_{i+1}+\cdots+\alpha_{i+1} e_2
		\mbox{ \rm{ for }} 0 \leq i \leq n-2-z_1,  \\
		[e_{z_1},e_{n}]&=\alpha_1 e_{3}+\gamma_1\, e_2,  \\
		[e_{z_{1}+k},e_{n}]&= \sum_{h=2}^{k+2} P_h\left([e_{z_{1}+k-1},e_{n}] +[e_{z_{1}+k},e_{n-1}]\right) e_{h+1}+ \beta_{k, 2} \, e_2, \\ &\phantom{= }{\mbox{\rm{\ for }}} 1 \leq k < n-z_{1}.
	\end{aligned}
\end{equation}
Consequently, $[e_{n-1},e_n]=[e_{z_1+n-z_1-1},e_n] \in \langle e_2, \ldots, e_{n-z_1+2} \rangle$. 
Note that $n-z_1+2 \leq  n-2$ since $z_1 \geq 4$. Therefore, for $0 \leq i \leq n-2-z_1$, one has $[[e_{n-1},e_n],e_{z_1+i}]=0$ due to the fact that $[e_{z_1+i},e_{q}]=0$, for $2\leq  q \leq  n-2$. 


Next, we assume that $4<z_1<n-2$ 
and we will prove that $$[[e_{z_1+i},e_{n-1}],e_n]=[[e_n,e_{z_1+i}],e_{n-1}]=0$$ for $1 \leq i \leq z_1-4$. 
First, one has
$$[[e_{z_1+i},e_{n-1}],e_n]=\alpha_1[e_{i+2},e_n]+\cdots+
\alpha_{i+1}[e_2,e_n]=\alpha_1 [e_{i+2},e_n]+\cdots+\alpha_{3-z_1+i}[e_{z_1},e_n]$$
due to the definition of the invariant $z_1$, see Equality (\ref{z_1-z_2-equivalent-def}). Moreover, if $3-z_1+i<1$, that is, if $i \leq z_1-3$, then $[[e_{z_1+i},e_{n-1}],e_n]=0$. Consequently, $[[e_{z_1+i},e_{n-1}],e_n]=0$ if $1 \leq i \leq z_1-4$. This proves the first assertion. 
On the other hand,
$$[[e_n,e_{z_1+i}],e_{n-1}]=-\displaystyle \sum_{h=2}^{i+2} P_h([e_{z_1},e_n]+[e_{z_1+i},e_{n-1}])\,[e_{h+1},e_{n-1}] - \beta_{i,2} \, [e_2,e_{n-1}]=$$
$$-\displaystyle \sum_{h=z_1-1}^{i+2} P_h(\alpha_1 e_3+\gamma_1 e_2+\alpha_1 e_{i+2}+\cdots+
\alpha_{i+1}e_2 )\,[e_{h+1},e_{n-1}].$$
In case that $z_1-1 > i+2$, i.e. $i \leq z_1-4$, then the previous expression is null. 
\end{proof}

Now, we continue studying filiform Lie algebras associated with the triple $(z_1,n-2,n)$ with $z_1 < n-2$. 
The following result characterizes the law of these algebras by describing its restrictions relative to the general law; see Theorem \ref{leygeneral}. 
This characterization will be used in Section \ref{hilbert_z2=n-2} to determine the Hilbert polynomial corresponding to this family of filiform Lie algebras.


\begin{notation}\label{p}
With previous notations, we write $p=p(\fkg)=\lfloor \frac{n-z_1-1}{2} \rfloor$ where $\lfloor\,\, \rfloor$ denotes the floor function. 
\end{notation}
\begin{prp}\label{caso(z_1,n-2,n)}
Let $\fkg$ be a filiform Lie algebra associated with the triple $(z_1,n-2,n)$, where $z_1 < n-2$. Then the law of $\fkg$ is defined by Theorem \ref{leygeneral} with closed restrictions $\{\alpha_i=0\}$ 
for $1 \leq i \leq p$ and open restrictions $\gamma_1\neq 0$ and 
$(\alpha_{p+1},\ldots,\alpha_{n-z_1-1})\neq (0, \ldots, 0).$ 

\end{prp}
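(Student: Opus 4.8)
The plan is to start from the explicit law of $\fkg$ written in Equation (\ref{law(z_1,n-2,n)}) inside the proof of Lemma \ref{lemaJacobi}, and to impose the Jacobi identities $J(e_{z_1+i},e_{n-1},e_n)=0$ for $0\leq i\leq n-2-z_1$, since these are the only Jacobi identities that can produce new relations among the parameters $\alpha_i,\gamma_1,\beta_{k,2}$ (all brackets among the $e_h$ with indices below $z_1$, or between two vectors of the abelian ideal $C^3\fkg=\langle e_2,\dots,e_{n-2}\rangle$, already vanish). By Lemma \ref{lemaJacobi} the first term $[[e_{n-1},e_n],e_{z_1+i}]$ drops out, so the identity reduces to
\begin{equation*}
	[[e_{z_1+i},e_{n-1}],e_n] + [[e_n,e_{z_1+i}],e_{n-1}] = 0 .
\end{equation*}
First I would expand both remaining brackets using the law (\ref{law(z_1,n-2,n)}) and the fact that $[e_q,e_{n-1}]=[e_q,e_n]=0$ for $2\le q\le n-2$, keeping careful track, via the coordinate maps $P_h$ (Remark \ref{map-Ph}), of which $e_{h+1}$ terms survive. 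The outcome of this expansion should be, for each $i$, a single scalar equation; collecting them for $i=0,1,2,\dots$ in increasing order should give a triangular system in the $\alpha_i$'s.

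The key computation is the base case and the induction step. For $i=0$ the identity $J(e_{z_1},e_{n-1},e_n)=0$ should force $\alpha_1=0$ (this also matches Remark \ref{remarkcambiobase}, where the case $\alpha_1\ne 0$ versus $\alpha_1=0$ is already foreshadowed, and here $z_1<z_2$ rules out the first case — actually I expect the identity at level $i=0$ or $i=1$ to kill $\alpha_1$ directly). Then, proceeding inductively on $i$, the identity $J(e_{z_1+i},e_{n-1},e_n)=0$ should read (modulo terms already known to vanish) $c\,\alpha_{i+1} + (\text{expression in }\alpha_1,\dots,\alpha_i) = 0$ with a nonzero numerical coefficient $c$; since the inductive hypothesis gives $\alpha_1=\dots=\alpha_i=0$, this yields $\alpha_{i+1}=0$. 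The range of $i$ for which this forced vanishing occurs is exactly $i\le z_1-4$ by Lemma \ref{lemaJacobi} together with the bookkeeping on indices; but one must push the argument a little further. The more delicate point is to see that the vanishing propagates precisely up to $\alpha_p$ with $p=\lfloor(n-z_1-1)/2\rfloor$ and no further: here one uses that for larger $i$ the term $[[e_{z_1+i},e_{n-1}],e_n]$ starts to involve $[e_{z_1},e_n]=\alpha_1 e_3+\gamma_1 e_2$ and hence $\gamma_1$, so the Jacobi relation no longer reads off a single $\alpha_{i+1}$ but instead becomes automatically satisfied once $\alpha_1=\dots=\alpha_p=0$, leaving $\alpha_{p+1},\dots,\alpha_{n-z_1-1}$ free. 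I would verify the exact index $p=\lfloor(n-z_1-1)/2\rfloor$ by writing out when the two indices $i+2$ (from $[e_{z_1+i},e_{n-1}]$) and $z_1-1$ (the threshold below which brackets with $e_n$ vanish) cross, combined with the symmetric threshold coming from $[[e_n,e_{z_1+i}],e_{n-1}]$; the floor appears because the surviving sum in the second bracket runs over $h$ from roughly $z_1-1$ to $i+2$, non-empty exactly when $2i \gtrsim n-z_1$.

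Once the closed restrictions $\alpha_1=\dots=\alpha_p=0$ are established, the open restrictions are immediate from the definition of the invariants: $\gamma_1\ne 0$ because $[e_{z_1},e_n]=\alpha_1 e_3+\gamma_1 e_2 = \gamma_1 e_2$ must be nonzero (that is the content of $z_1=\min\{k\ge 4\mid [e_k,e_n]\ne 0\}$, Equation (\ref{z_1-z_2-equivalent-def})); and $(\alpha_{p+1},\dots,\alpha_{n-z_1-1})\ne(0,\dots,0)$ because $[e_{z_2},e_{z_2+1}]=[e_{n-2},e_{n-1}]=[e_{z_1+(n-2-z_1)},e_{n-1}] = \alpha_1 e_{n-z_1}+\dots+\alpha_{n-z_1-1}e_2$ must be nonzero, again by (\ref{z_1-z_2-equivalent-def}), and all the $\alpha_i$ with $i\le p$ among these coefficients are zero. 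The main obstacle I anticipate is the precise index bookkeeping in the inductive step — in particular pinning down exactly why the cascade of forced vanishings stops at $p=\lfloor(n-z_1-1)/2\rfloor$ rather than one index earlier or later — so I would treat the cases $n-z_1$ even and odd separately if a unified argument gets unwieldy, and cross-check small cases ($z_1=4$, small $n$) against the computer-algebra computations mentioned in the introduction.
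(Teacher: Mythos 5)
Your skeleton coincides with the paper's proof (same starting law from the proof of Lemma \ref{lemaJacobi}, reduction to the identities $J(e_{z_1+i},e_{n-1},e_n)=0$, induction on $i$, and the open restrictions obtained exactly as in Remark \ref{aclaracionPropo3}), but the core inductive mechanism you describe is not what these identities actually give, and as stated it would derail the proof. First, you locate the forced vanishings in the range $i\le z_1-4$; by Lemma \ref{lemaJacobi} that is precisely the range where both surviving terms of the identity are identically zero, so those identities impose nothing --- the restrictions come from larger $i$ (for general $z_1$, the first one killing $\alpha_1$ is at $i=z_1-3$, not at $i=0$ or $i=1$). Second, the restrictive identities are not of the linear--triangular form $c\,\alpha_{i+1}+(\text{terms in }\alpha_1,\dots,\alpha_i)=0$: after substituting the vanishings already obtained, each one collapses to a perfect square of a single parameter, e.g.\ for $z_1=4$ one finds $J(e_{z_1+1},e_{n-1},e_n)=-2\alpha_1^2e_2$, $J(e_{z_1+3},e_{n-1},e_n)=-3\alpha_2^2e_2$, and at the general step $J(e_{z_1+2k+1},e_{n-1},e_n)=-\alpha_{k+1}^2e_2$. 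If each identity really determined $\alpha_{i+1}$ with a nonzero linear coefficient, the $n-2-z_1$ nontrivial identities would annihilate far more parameters than the $p$ allowed by the statement (already for the triple $(4,6,8)$ one has $p=1$ and $\alpha_2,\alpha_3$ must survive, subject only to the open condition), so your proposed mechanism is inconsistent with the proposition you are proving.

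Relatedly, the cascade does not stop because a terminal block of identities "becomes automatically satisfied": the identities of one parity (odd $i$ when $z_1=4$, shifted by $z_1-4$ in general) are restrictive and each kills one new parameter $\alpha_{k+1}$ (note the killed index is roughly $i/2$, not $i+1$), while those of the other parity are identically satisfied, and the two kinds alternate all the way up to $i=n-2-z_1$; the value $p=\lfloor\frac{n-z_1-1}{2}\rfloor$ is exactly the count of restrictive identities in that range. The bulk of the paper's proof is the inductive verification that the even-parity identity $J(e_{z_1+2k},e_{n-1},e_n)$ is trivially satisfied --- the explicit cancellation of the two sums (\ref{jac-1a-parte}) and (\ref{jac-2a-parte}) --- and it is precisely this cancellation that makes the next odd-parity identity reduce to the single square $-\alpha_{k+1}^2e_2=0$. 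Your plan omits this step entirely, and without it neither the square nor the correct value of $p$ can be extracted. The final part of your plan is fine: once $\alpha_1=\dots=\alpha_p=0$, the open restrictions $\gamma_1\neq 0$ and $(\alpha_{p+1},\dots,\alpha_{n-z_1-1})\neq(0,\dots,0)$ follow from $[e_{z_1},e_n]\neq0$ and $[e_{n-2},e_{n-1}]\neq0$ via (\ref{z_1-z_2-equivalent-def}), exactly as in Remark \ref{aclaracionPropo3}.
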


\begin{rmr}\label{aclaracionPropo3}
The open restrictions of Proposition \ref{caso(z_1,n-2,n)} are deduced from the definition of invariants $z_1(\fkg)$ and $z_2(\fkg)$, while the closed restrictions are equivalent to  $[C^{n-z_1-p+2}\fkg, C^{n-z_2}\fkg]=0$.
\end{rmr}


\begin{proof} (of Proposition \ref{caso(z_1,n-2,n)})
First, the law of $\fkg$ is given at the beginning of the proof of Lemma \ref{lemaJacobi}. Since $z_2=n-2$, then $C^3\fkg=\langle e_2, \ldots,e_{n-2} \rangle$ is an abelian ideal and $C^2 \fkg$ is not. 
Consequently, the Jacobi identities 
$J(e_{q},e_r,e_u)=0$ are trivially satisfied (that is, we do not obtain closed restrictions from those identities) in the case $q,r,u \in \{1, \ldots, n-2\}$. Now, we have to impose the Jacobi identities $J(e_{z_1+i},e_{n-1},e_n)=0$ for $0 \leq i \leq n-2-z_1$. According to Lemma \ref{lemaJacobi}, $$J(e_{z_1+i},e_{n-1},e_n)=[[e_{z_1+i},e_{n-1}],e_n]+[[e_n,e_{z_1+i}],e_{n-1}].$$ 
Next, we distinguish several cases depending on the value of $i$:
\begin{itemize}
	\item 
	When $i=0$, we obtain 
	$$[[e_{z_1},e_{n-1}],e_n]+[[e_n,e_{z_1}],e_{n-1}]=[\alpha_1 e_2,e_n]-[\alpha_1 e_3+\gamma_1 e_2,e_{n-1}]\equiv 0.$$ 
	\item	In case that $i=1$, it is satisfied that $[[e_{z_1+1},e_{n-1}],e_n]=[\alpha_1 e_3+\alpha_2 e_2,e_n]=0$ and 
	\begin{align*}
		[[e_n,e_{z_1+1}],e_{n-1}]&=-\displaystyle \sum_{h=2}^3 P_h([e_{z_1},e_n]+[e_{z_1+1},e_{n-1}])\,[e_{h+1},e_{n-1}] - \beta_{1,2}[e_2,e_{n-1}] \\ &=
		-\displaystyle \sum_{h=2}^3 P_h(\alpha_1 e_3+\gamma_1 e_2+\alpha_1 e_3+\alpha_2 e_2)\,[e_{h+1},e_{n-1}] \\ &=-2\alpha_1 [e_4,e_{n-1}]-(\gamma_1+\alpha_2)[e_3,e_{n-1}] \\ &= -2\alpha_1 [e_4,e_{n-1}].
	\end{align*} 
	Since $[e_{z_1},e_{n-1}]=\alpha_1\,e_2$,
	the Jacobi identity $J(e_{z_1+1},e_{n-1},e_n)$ 
	equals $-2\alpha_1^2 e_2$  if $z_1=4$ and it is identically zero otherwise. 
	In this way, we conclude that $\alpha_1=0$ if $z_1=4$ and there are no closed restrictions when $z_1 > 4$.
	\item If $i=2$, we have that $$[[e_{z_1+2},e_{n-1}],e_n]=[\alpha_1\,e_4+\alpha_2\, e_3+\alpha_3 e_2,e_n]=\alpha_1 [e_4,e_n].$$ This last expression is zero when $z_1 >4$ due to the definition of $z_1$. If $z_1=4$ we also obtain that the previous term is null according to the restriction $\alpha_1=0$ from the case $i=1$. For the last term in the Jacobi identity $J(e_{z_1+2}, e_{n-1}, e_n)=0$, we obtain
	\begin{align*}
		[[e_n,e_{z_1+2}],e_{n-1}] &= -\displaystyle \sum_{h=2}^4 P_h([e_{z_1+1},e_n]+[e_{z_1+2},e_{n-1}])\,[e_{h+1},e_{n-1}] - \beta_{2,2}[e_2,e_{n-1}] \\ &= 
		-\displaystyle \sum_{h=2}^4 P_h\left(3\alpha_1 e_4+(\gamma_1+2\alpha_2) e_3+(\alpha_3+\beta_{1,2}) e_2\right)\,[e_{h+1},e_{n-1}] \\ &= 
		-3\alpha_1 [e_5,e_{n-1}]-(\gamma_1+2\alpha_2)[e_4,e_{n-1}] - (\alpha_3+\beta_{1,2}) [e_3,e_{n-1}].
	\end{align*}		
	
	So, the Jacobi expression $J(e_{z_1+2},e_{n-1},e_n)$ 
	equals $-3\alpha_1^2 \,e_2$ when $z_1=5$ and it vanishes if $z_1 > 5$. Hence, we deduce that $\alpha_1 = 0$ in the case $z_1 = 5$, while no additional restrictions arise when $z_1 > 5$.

\end{itemize}

Let us note that the restrictions obtained from the Jacobi identities depends on the value of the invariant $z_1$. From now on, we assume that $z_1=4$. Then, we know that $\alpha_1=0$ due the case $i=1$ and there are no closed restrictions from the case $i=2$.


When $i=3$, the Jacobi identity $J(e_{z_1+3},e_{n-1},e_n)=0$ must be fulfilled. In this case,
\begin{equation} \label{Jacobi-z1+3-(1)}
[[e_{z_1+3},e_{n-1}],e_n]=[\alpha_2 e_4+\alpha_3 e_3+\alpha_4 e_2,e_n]=\alpha_2 \gamma_1 e_2 
\end{equation} and
\begin{equation}\label{Jacobi-z1+3-(2)}
\begin{aligned}
	[[e_n,e_{z_1+3}],e_{n-1}]& =-\displaystyle \sum_{h=2}^5 P_h([e_{z_1+2},e_n]+[e_{z_1+3},e_{n-1}])\,[e_{h+1},e_{n-1}] - \beta_{3,2}[e_2,e_{n-1}]\\  
	&= -\displaystyle \sum_{h=2}^5 P_h\left((\gamma_1+3\alpha_2)\,e_4+(2\alpha_3+\beta_{1,2})\,e_3+(\alpha_4+\beta_{2,2})\,e_2\right)\,[e_{h+1},e_{n-1}]\\ &=  -(\gamma_1+3\alpha_2)\, [e_5,e_{n-1}]-(2\alpha_3+\beta_{1,2})\,[e_4,e_{n-1}]=
	-(\gamma_1+3\alpha_2)\alpha_2 \, e_2.
\end{aligned}
\end{equation}
Then, from expressions (\ref{Jacobi-z1+3-(1)}) and (\ref{Jacobi-z1+3-(2)}), we have $$J(e_{z_1+3},e_{n-1},e_n)=\alpha_2 \gamma_1 \, e_2 - (\gamma_1+3\alpha_2)\alpha_2 \, e_2=-3\alpha_2^2\, e_2$$ and we conclude that $\alpha_2=0$.

After these first cases, we continue the proof by induction on $i$,  which also corresponds to an induction on $k$, since $i$ can be either $2k$ in the even case or $2k-1$ in the odd case. Therefore, it is also an induction on $k$. 
In fact, we have a double induction hypothesis: first, we assume that for $i=2k-1$, imposing $J(e_{z_1+j},e_{n-1},e_n)=0$, for $0 \leq j \leq 2k-1$ leads to
$\alpha_1=\alpha_2=\cdots=\alpha_k=0$. Second, we suppose that the Jacobi identities $J(e_{z_1+\ell},e_{n-1},e_n)=0$ are trivially satisfied for even values of $\ell$, where $0 \leq \ell < i$.

In order to complete the proof for case $z_1=4$, we first prove that $J(e_{z_1+2k},e_{n-1},e_n)=0$ is trivially satisfied and second that, when imposing $J(e_{z_1+2k+1},e_{n-1},e_n)=0$, we obtain condition $\alpha_{k+1}=0$.

According to the induction hypothesis, $\alpha_1=\alpha_2=\ldots=\alpha_k=0$. Then, the law of $\fkg$ is given by 
\begin{align*} [e_1,e_h]&= e_{h-1}, \mbox{ \rm{ for }} 3 \leq h \leq n, \\
[e_{z_1+i},e_{n-1}]&=\alpha_{k+1} e_{i+2-k}+\cdots+\alpha_{i+1} e_2
\mbox{ \rm{ for }} k \leq i \leq n-2-z_1,  \\
[e_{z_1},e_{n}]&=\gamma_1\, e_2,  \\
[e_{z_1+1},e_{n}]&=\gamma_1\, e_3+\beta_{1,2}\,e_2,  \\
[e_{z_1+2},e_{n}]&=\gamma_1\, e_4+\beta_{1,2}\,e_3+\beta_{2,2}\,e_2,  \\
&  \phantom{x}\vdots\\	
[e_{z_{1}+t-1},e_{n}]&=\gamma_1\, e_{t+1}+\beta_{1,2}\,e_t+\beta_{2,2}\,e_{t-1}+\cdots+\beta_{t-1,2}\,e_2, \mbox{ \rm{ for }} 2 \leq t \leq n-z_1.
\end{align*}
Now, we prove that $J(e_{z_1+2k},e_{n-1},e_n)=0$ is trivially satisfied. Let us note that, obviously, $2k < n-z_1$. In this case, by Lemma \ref{lemaJacobi}, the first term of $J(e_{z_1+2k},e_{n-1},e_n)$ is
\begin{equation}\label{jac-1a-parte}
\begin{split}
	[[e_{z_1+2k},e_{n-1}],e_n]& =\alpha_{k+1}[e_{k+2},e_n]+\cdots+\alpha_{2k+3-z_1}[e_{z_1},e_n]\\ &=\alpha_{2k+3-z_1}\gamma_1 \,e_2\,+ \alpha_{2k+2-z_1}(\gamma_1 \,e_3+\beta_{1,2}\,e_2) \\ &+\alpha_{2k+1-z_1}(\gamma_1 e_4+\beta_{1,2}\,e_3+\beta_{2,2}\,e_2)+ \cdots \\ & 
	+\alpha_{k+1}(\gamma_1 \,e_{k+4-z_1}+\beta_{1,2}\,e_{k+3-z_1}+\cdots+\beta_{k+2-z_1,2}\,e_2).
\end{split}
\end{equation}
Concerning the second term of that Jacobi identity, we have
$$[[e_n,e_{z_1+2k}],e_{n-1}]=-\displaystyle \sum_{h=z_1+k-1}^{2k+2} P_h([e_{z_1+2k-1},e_n]+[e_{z_1+2k},e_{n-1}])[e_{h+1},e_{n-1}],\,\,{\rm where}$$
$$[e_{z_1+2k},e_{n-1}]=\alpha_{k+1}e_{k+2}+\cdots+\alpha_{2k+1}e_2$$ and 
$$[e_{z_1+2k-1},e_n]=\gamma_1 \, e_{2k+1}+\beta_{1,2}\, e_{2k}+\cdots+\beta_{2k-1,2}\,e_2.$$
Let us note that  \, $P_h([e_{z_1+2k},e_{n-1}])=0$ \, since \, $z_1+k-1 \leq h \leq 2k+2$ \, and \, $k+2 < z_1+k-1$ due to the fact that $z_1 \geq 4$. Moreover, \, $P_{2k+2}([e_{z_1+2k-1},e_n])=0$ \, since \, $[e_{z_1+2k-1},e_n] \in \langle e_2,\ldots,e_{2k+1} \rangle$.
Then, 
\begin{equation}\label{jac-2a-parte}
\begin{aligned}
	[[e_n,e_{z_1+2k}],e_{n-1}]& =-\beta_{k+2-z_1,2}\,\alpha_{k+1} \, e_2 - \beta_{k-z_1+1,2} \,(\alpha_{k+1}\,e_3+\alpha_{k+2}\,e_2)  \\
	& -\beta_{k-z_1,2} \,(\alpha_{k+1}\,e_4+\alpha_{k+2}\,e_3+\alpha_{k+3}\,e_2) - \cdots  \\
	&-\beta_{1,2}\,(\alpha_{k+1}\,e_{k+2-z_1}+\cdots+\alpha_{2k+2-z_1}\,e_2)  \\ 
	&-\gamma_1(\alpha_{k+1}\,e_{k+4-z_1}+\cdots+ \alpha_{2k+3-z_1}\,e_2).
\end{aligned}
\end{equation}
Therefore, the Jacobi identity $J(e_{z_1+2k},e_{n-1},e_n)=0$ is trivially satisfied since adding (\ref{jac-1a-parte}) and (\ref{jac-2a-parte}) one obtains $0$. 
Next, we prove that, by imposing $J(e_{z_1+2k+1},e_{n-1},e_n)=0$, we obtain condition $\alpha_{k+1}=0$. The first term on this Jacobi identity is
$$[[e_{z_1+2k+1},e_{n-1}],e_n]=\alpha_{k+1}[e_{k+3},e_n]+\cdots+\alpha_{2k+4-z_1}[e_{z_1},e_n]=\alpha_{2k+4-z_1}\gamma_1 \,e_2+$$
$$\alpha_{2k+3-z_1}(\gamma_1 \,e_3+\beta_{1,2}\,e_2)+\alpha_{2k+2-z_1}(\gamma_1 e_4+\beta_{1,2}e_3+\beta_{2,2}e_2)+ \cdots $$
$$+\alpha_{k+1}(\gamma_1 e_{k+5-z_1}+\beta_{1,2}e_{k+4-z_1}+\cdots+\beta_{k+3-z_1,2}e_2).$$
Regarding the other non-zero term of this Jacobi identity, we have
$$[[e_n,e_{z_1+2k+1}],e_{n-1}]=-\displaystyle \sum_{h=z_1+k-1}^{2k+3} P_h([e_{z_1+2k},e_n]+[e_{z_1+2k+1},e_{n-1}])[e_{h+1},e_{n-1}]$$\,\,{\rm where}
$$[e_{z_1+2k+1},e_{n-1}]=\alpha_{k+1}e_{k+3}+\cdots+\alpha_{2k+2}e_2$$ \, and \,
$$[e_{z_1+2k},e_n]=\gamma_1 e_{2k+2}+\beta_{1,2}e_{2k+1}+\cdots+\beta_{2k,2}e_2.$$
Since $z_1=4$, $P_h([e_{z_1+2k+1},e_{n-1}])=\alpha_{k+1}$ if $h=z_1+k-1$ and
$$[[e_n,e_{z_1+2k+1}],e_{n-1}]=-\displaystyle \sum_{h=z_1+k-1}^{2k+3} P_h([e_{z_1+2k},e_n]) [e_{h+1},e_{n-1}]-\alpha_{k+1}[e_{z_1+k},e_{n-1}].$$
Following a similar reasoning to the one used for the Jacobi identity $J(e_{z_1+2k},e_{n-1},e_n)=0$, 
one proves that  
$$[[e_{z_1+2k+1},e_{n-1}],e_n]=\displaystyle \sum_{h=z_1+k-1}^{2k+3} P_h([e_{z_1+2k},e_n])[e_{h+1},e_{n-1}].$$
Therefore, the Jacobi identity $J(e_{z_1+2k+1},e_{n-1},e_n)=0$ is given by
$$-\alpha_{k+1}[e_{z_1+k},e_{n-1}]=-\alpha_{k+1}^2 e_2=0,$$
and we conclude that $\alpha_{k+1}=0$.

Finally, we assume that $z_1 >4$. In that case, and according to Lemma \ref{lemaJacobi}, we know that $J(e_{z_1+i},e_{n-1},e_n)=0$ is trivially satisfied for $1 \leq i \leq z_1-4$. Following an analogous procedure  to the one used in this proof for the case $z_1=4$, it is straightforward 
to prove that, when imposing $J(e_{2z_1-4+j},e_{n-1},e_n)=0$ for $1 \leq j \leq n-2z_1+2$ and $j$ odd, we obtain $\alpha_j=0$. 

\end{proof}


\begin{ejemplo}
Regarding the proof of Proposition \ref{caso(z_1,n-2,n)}, 
in the following diagram we show as an example the restrictions obtained when $4 \leq z_1 \leq 8$ depending on the value of $i$, for $0\leq i\leq n-z_1-2$,  where the symbol $-$ means that no closed restrictions are obtained from the corresponding Jacobi identity and $k = \lfloor \frac{n-z_1-2}{2} \rfloor$.  


{\rm $$\begin{tabular}{|c|c|c|c|c|c|}
\hline
$i \backslash z_1$ & 4 &5  & 6 & 7 & 8 \\
\hline
1& $\alpha_1=0$ & --  & --  & --  & --  \\
\hline
2& -- & $\alpha_1=0$  & -- & -- & -- \\
\hline
3& $\alpha_2=0$ & -- &  $\alpha_1=0$ & -- & --  \\
\hline
4& -- & $\alpha_2=0$ & -- &  $\alpha_1=0$ & -- \\
\hline
5& $\alpha_3=0$ & -- & $\alpha_2=0$ & -- &  $\alpha_1=0$ \\
\hline
6& --  & $\alpha_3=0$ & -- & $\alpha_2=0$ & -- \\
\hline
7& $\alpha_4=0$ & -- & $\alpha_3=0$ & -- & $\alpha_2=0$ \\
\hline
8& -- & $\alpha_4=0$ & -- & $\alpha_3=0$ & -- \\
\hline
\vdots&  \vdots & \vdots & \vdots &\vdots  & \vdots  \\
\hline
$2 k$& -- & $\alpha_k=0$ & -- & $\alpha_{k-1}=0$ & --  \\
\hline
$2 k+1$& $\alpha_{k+1}=0$ & -- & $\alpha_k=0$ & --  & $\alpha_{k-1}=0$ \\
\hline
\end{tabular}$$} \vspace{0.2cm}

\end{ejemplo}






Using Proposition \ref{cambiobase} and Remark \ref{remarkcambiobase}, it is possible to reduce the number of parameters in the law of a filiform Lie algebra fulfilling the hypothesis of Proposition \ref{caso(z_1,n-2,n)}.

\begin{crl}\label{ley(z_1,n-2-n)normalizada} 
Under the hypothesis of Proposition \ref{caso(z_1,n-2,n)},
there exists an adapted basis $\{ f_i\}_{i=1}^n$ such that the law of $\fkg$ is given by
\begin{align*} [f_1,f_h]&= f_{h-1} \mbox{ \rm{ for }} 3 \leq h \leq n, \\
[f_{z_1},f_{n}]&= f_2,  \\
[f_{z_1+i},f_{n-1}]&=\frac{\alpha_{p+1}}{\gamma_1} f_{i-p+2}+\frac{\alpha_{p+2}}{\gamma_1} f_{i-p+1}+\ldots+\frac{\alpha_{i+1}}{\gamma_1} f_2
\mbox{ \rm{ for }} p \leq i \leq n-2-z_1,  \\
[f_{z_{1}+k},f_{n}]&= \sum_{h=2}^{k+2} P_h\left([f_{z_{1}+k-1},f_{n}] +[f_{z_{1}+k},f_{n-1}]\right) \gamma_1 f_{h+1}+ \frac{\beta_{k,2}}{\gamma_1} \, f_2, \\ &\phantom{= }{\mbox{\rm{\ for }}} 1 \leq k \leq n-z_{1}-1.
\end{align*}
\end{crl}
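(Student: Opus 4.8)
The plan is to derive Corollary \ref{ley(z_1,n-2-n)normalizada} as a direct consequence of Proposition \ref{caso(z_1,n-2,n)} combined with the basis-change mechanism of Proposition \ref{cambiobase} (equivalently, the explicit rescaling recorded in Remark \ref{remarkcambiobase}). By Proposition \ref{caso(z_1,n-2,n)} we may start from an adapted basis $\{e_i\}_{i=1}^n$ for which the law of $\fkg$ is the one written at the beginning of the proof of Lemma \ref{lemaJacobi}, namely Equation (\ref{law(z_1,n-2,n)}), but now with the extra vanishing $\alpha_i=0$ for $1\leq i\leq p$ and with $\gamma_1\neq 0$ (and $(\alpha_{p+1},\dots,\alpha_{n-z_1-1})\neq 0$). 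In particular $\alpha_1=0$, so we are in the second case of Remark \ref{remarkcambiobase}; there the distinguished nonzero $\gamma$-parameter is $\gamma_1$ itself (i.e. $q=1$ in the notation of that remark), since $[e_{z_1},e_n]=\alpha_1 e_3+\gamma_1 e_2=\gamma_1 e_2$.

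The key step is then simply to apply Proposition \ref{cambiobase} with $\lambda=\gamma_1^{-1}$: this produces a new adapted basis $\{f_i\}_{i=1}^n$ (explicitly $f_1=e_1$, $f_i=\gamma_1^{-1}e_i$ for $i\geq 2$, as in Equation (\ref{condicion-sobre-lambda_h}) with $\lambda_1=1$) with respect to which the parameters associated with $\fkg$ are $\gamma_1^{-1}(\underline\alpha,\underline\gamma,\underline\beta)$. Plugging these rescaled parameters into the general law of Theorem \ref{leygeneral} specialized to $z_2=n-2$ (i.e. into Equation (\ref{law(z_1,n-2,n)})) gives exactly the displayed formulas: the bracket $[f_{z_1},f_n]$ becomes $\tfrac{\alpha_1}{\gamma_1}f_3+\tfrac{\gamma_1}{\gamma_1}f_2=f_2$ because $\alpha_1=0$; the brackets $[f_{z_1+i},f_{n-1}]$ become $\tfrac{\alpha_{p+1}}{\gamma_1}f_{i-p+2}+\cdots+\tfrac{\alpha_{i+1}}{\gamma_1}f_2$ for $i\geq p$ (and vanish for $i<p$, since those $\alpha$'s are zero), re-indexing the surviving coefficients starting from $\alpha_{p+1}$; and the recursive brackets $[f_{z_1+k},f_n]$ retain the shape of Theorem \ref{leygeneral}, with the $P_h$-coefficients now expressed through the $f$-basis coordinates $\overline P_h$ — which, by the identity $\overline P_h(u)=\lambda_h P_h(u)$ used in the proof of Proposition \ref{cambiobase}, introduces exactly the factor $\gamma_1$ (from $\lambda_h^{-1}=\gamma_1$ for $h\geq 2$) multiplying the sum, and the constant term becomes $\tfrac{\beta_{k,2}}{\gamma_1}f_2$.

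Concretely the proof would read: ``We first apply Proposition \ref{caso(z_1,n-2,n)} to fix an adapted basis $\{e_i\}$ realizing the law (\ref{law(z_1,n-2,n)}) with $\alpha_1=\cdots=\alpha_p=0$ and $\gamma_1\neq 0$. We then apply Proposition \ref{cambiobase} with $\lambda=\gamma_1^{-1}$, obtaining an adapted basis $\{f_i\}$ with associated parameters $\gamma_1^{-1}(\underline\alpha,\underline\gamma,\underline\beta)$. Substituting into (\ref{law(z_1,n-2,n)}) and using $\alpha_i=0$ for $i\leq p$ yields the claimed brackets,'' followed by the one-line verification of $[f_{z_1},f_n]=f_2$ and a remark that the $\gamma_1$ appearing in front of the $P_h$-sum in the last family comes from rewriting $P_h$ relative to $\{f_i\}$ via $\overline P_h=\lambda_h P_h$, exactly as in the last displayed computation of the proof of Proposition \ref{cambiobase}.

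The only mildly delicate point — the one I would flag as the main obstacle, though it is more bookkeeping than mathematics — is tracking the powers of $\lambda=\gamma_1^{-1}$ through the recursive family $[f_{z_1+k},f_n]$: each bracket on the left scales by $\lambda^2$, each $f_{h+1}$ contributes $\lambda^{-1}$ relative to $e_{h+1}$, and the arguments $[e_{z_1+k-1},e_n]+[e_{z_1+k},e_{n-1}]$ inside $P_h$ scale by $\lambda^{-2}$ when rewritten in the $f$-basis, so that the net effect is a single factor $\gamma_1$ in front of the $P_h$-sum and a single $\gamma_1^{-1}$ in front of $\beta_{k,2}$ — precisely the normalization already carried out in full generality inside the proof of Proposition \ref{cambiobase}, so here it suffices to cite that computation rather than repeat it. The re-indexing of the $\alpha$'s in the $[f_{z_1+i},f_{n-1}]$ family (from $\alpha_{p+1},\dots,\alpha_{i+1}$ in the shifted positions $f_{i-p+2},\dots,f_2$) is the other place to be careful, and it follows immediately from setting $\alpha_1=\cdots=\alpha_p=0$ in the formula $\alpha_1 e_{i+2}+\alpha_2 e_{i+1}+\cdots+\alpha_{i+1}e_2$ of Theorem \ref{leygeneral}.
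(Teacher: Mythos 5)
Your proposal is correct and is essentially the paper's own argument: the paper proves the corollary in one line by taking $z_1<z_2=n-2$, $\alpha_1=0$ and $q=1$ in Proposition \ref{cambiobase} and Remark \ref{remarkcambiobase}, i.e.\ exactly your rescaling with $\lambda=\gamma_1^{-1}$ applied to the law given by Proposition \ref{caso(z_1,n-2,n)}. Your extra bookkeeping (the re-indexing of the $\alpha$'s and the single factor $\gamma_1$ coming from rewriting the coordinate maps in the new basis, since $\overline{P}_h=\lambda_h^{-1}P_h$ for $h\geq 2$) is just the detail the paper leaves to Proposition \ref{cambiobase}'s computation, so no gap remains.
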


\begin{proof}
Take $z_1<z_2=n-2$, $\alpha_1= 0,$ and $ q=1$ in Proposition \ref{cambiobase} and Remark \ref{remarkcambiobase}.	
\end{proof}

\begin{rmr} \label{nulidad_c2cz1*}
Notice that, by Theorem \ref{leygeneral}, if $z_2=n-2$ then  $$[C^2 \fkg, C^{z_1^{*}} \fkg]=[\langle e_2, \ldots, e_{n-1} \rangle, \langle e_2, \ldots, e_{z_1}  \rangle]=\C \cdot [e_{z_1},e_{n-1}] =\C \cdot \alpha_1 e_2 $$ for some $\alpha_1\in \C$. If $z_1=z_2=n-2$, then $[C^2 \fkg, C^{z_1^{*}} \fkg] \neq \{0\}$ due to Proposition \ref{caso(n-2,n-2,n)new}. In case that $z_1 < n-2$, due to Corollary \ref{ley(z_1,n-2-n)normalizada}, one has that  $[C^2 \fkg, C^{z_1^{*}} \fkg]=\{0\}$. See Section \ref{hilbert_z2=n-2}.



\end{rmr}




\begin{crl}\label{sobre_theta2}
Under the hypothesis of Proposition \ref{caso(z_1,n-2,n)}, one has   
$$  [C^2 \fkg, C^{z_1^{*}-p+1} \fkg] = \{0\}.$$
Then, $\theta_2(\fkg) \leq z_1^{*}-p+1$ and,   therefore, $[C^2 \fkg, C^{z_1^{*}-q} \fkg]=\{0\}$, for $0 \leq q \leq p-1.$ 

\end{crl}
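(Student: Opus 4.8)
The plan is to make everything explicit in the adapted basis $\{e_i\}_{i=1}^n$ provided by Proposition \ref{caso(z_1,n-2,n)}, using the description $C^k\fkg=\langle e_2,\dots,e_{k^*}\rangle$ for $2\le k\le n-1$ together with the hypothesis $z_2=n-2$, which by Remark \ref{z_2yz_1estrella} means that $C^3\fkg=\langle e_2,\dots,e_{n-2}\rangle$ is abelian while $C^2\fkg=\langle e_2,\dots,e_{n-1}\rangle$ is not. First I would translate the target index: since $r^*=n+1-r$, one computes $(z_1^*-p+1)^*=z_1+p-1$, so that $C^{z_1^*-p+1}\fkg=\langle e_2,\dots,e_{z_1+p-1}\rangle$. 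Using $p=\lfloor\frac{n-z_1-1}{2}\rfloor\le\frac{n-z_1-1}{2}$ and $z_1\le n-3$ (from $z_1<n-2$), one checks the two inequalities $z_1+p-1\le n-3$ and $4\le z_1^*-p+1\le n-2$; the first says all generators $e_2,\dots,e_{z_1+p-1}$ lie inside the abelian ideal $C^3\fkg$, and the second says $C^{z_1^*-p+1}\fkg$ is a genuine term of the lower central series with index in $\{2,\dots,n-1\}$. (Note also $p\ge 1$ here, so the closed restriction $\alpha_1=\cdots=\alpha_p=0$ is nonvacuous.)

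The key step is to show that $[e_j,e_{n-1}]=0$ for every $j$ with $2\le j\le z_1+p-1$. Inspecting the law of Theorem \ref{leygeneral} with $e_{n-1}=e_{z_2+1}$, the only basis vectors $e_j$ with $j\ge 2$ that can have nonzero bracket with $e_{n-1}$ are those with $z_1\le j\le z_2=n-2$, appearing through $[e_{z_1+i},e_{z_2+1}]=\alpha_1 e_{i+2}+\alpha_2 e_{i+1}+\cdots+\alpha_{i+1}e_2$ for $0\le i\le z_2-z_1$; in particular $[e_j,e_{n-1}]=0$ for $2\le j\le z_1-1$. Now Proposition \ref{caso(z_1,n-2,n)} gives $\alpha_1=\cdots=\alpha_p=0$, so the displayed bracket reduces to $\sum_{m\ge p+1}\alpha_m e_{i+3-m}$, which is an empty sum precisely when $i+1\le p$, i.e. for $0\le i\le p-1$. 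Hence $[e_{z_1+i},e_{n-1}]=0$ for $0\le i\le p-1$, that is $[e_j,e_{n-1}]=0$ for $z_1\le j\le z_1+p-1$, and combining with the previous range yields the claim for all $2\le j\le z_1+p-1$.

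The corollary now falls out. The ideal $[C^2\fkg,C^{z_1^*-p+1}\fkg]$ is spanned by the brackets $[e_i,e_j]$ with $2\le i\le n-1$ and $2\le j\le z_1+p-1$. If $i\le n-2$, then both $e_i$ and $e_j$ lie in the abelian ideal $C^3\fkg$, so $[e_i,e_j]=0$; and if $i=n-1$, then $[e_{n-1},e_j]=0$ by the previous paragraph. Therefore $[C^2\fkg,C^{z_1^*-p+1}\fkg]=\{0\}$, and by the very definition of $\theta_2$ (Subsection \ref{theta-vector}) this gives $\theta_2(\fkg)\le z_1^*-p+1$. Finally, for $0\le q\le p-1$ one has $z_1^*-p+1\le z_1^*-q$, so since the bracket bifiltration is decreasing (Subsection \ref{hp_subsection_definition}, Lemma \ref{arrowshape}) we get $[C^2\fkg,C^{z_1^*-q}\fkg]\subseteq[C^2\fkg,C^{z_1^*-p+1}\fkg]=\{0\}$, which is the last assertion (and strengthens Remark \ref{nulidad_c2cz1*}, recovered for $q=0$).

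The only point requiring care is the index bookkeeping — verifying $z_1+p-1\le n-3$ and $z_1^*-p+1\ge 4$ so that the "abelian $C^3\fkg$" argument applies and $C^{z_1^*-p+1}\fkg$ is a legitimate term, and reading off from $\alpha_1=\cdots=\alpha_p=0$ exactly which brackets $[e_j,e_{n-1}]$ vanish; beyond this there is no real obstacle, everything being a direct consequence of the normalized law in Proposition \ref{caso(z_1,n-2,n)} and the monotonicity of the bifiltration.
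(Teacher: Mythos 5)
Your proof is correct and follows essentially the same route as the paper's: write $C^{z_1^*-p+1}\fkg=\langle e_2,\ldots,e_{z_1+p-1}\rangle$, reduce the bracket ideal to the span of the brackets $[e_{z_1+i},e_{n-1}]$ for $0\le i\le p-1$ (everything else dies because both entries lie in the abelian ideal $C^3\fkg$ or because the law gives $[e_j,e_{n-1}]=0$ for $j<z_1$), and annihilate those generators via the closed restrictions $\alpha_1=\cdots=\alpha_p=0$ of Proposition \ref{caso(z_1,n-2,n)}, the remaining claims following, exactly as you say, from the definition of $\theta_2$ and the monotonicity of the bracket bifiltration. One immaterial slip: $e_n$ can also bracket nontrivially with $e_{n-1}$, so your aside that only the $e_j$ with $z_1\le j\le n-2$ can do so is not literally exhaustive, but since your indices never exceed $z_1+p-1\le n-3$ this does not affect the argument.
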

\begin{proof} One has 
\begin{align*}
[C^2 \fkg, C^{z_1^{*}-p+1} \fkg]& =[\langle e_2, \ldots, e_{n-1} \rangle,\langle e_2, \ldots, e_{z_1+p-1} \rangle] \\ &= \langle [e_{z_1},e_{n-1}],\ldots ,[e_{z_1+p-1},e_{n-1}]\rangle \\ &=
\langle \alpha_1 \, e_2,\,  \alpha_1 \,e_3+\alpha_2\, e_2, \, \ldots, \alpha_1\, e_{p+1}\,+\,\alpha_2 \,e_p + \cdots+\alpha_p \,e_2  \rangle \\ &=\{0\}.
\end{align*}    
\end{proof}

Next, we show the following result concerning an isomorphism that allows us to reduce one parameter in the family of filiform Lie algebras associated with the triple $(n-3,n-2,n)$. See also Proposition \ref{isomorfia(n-r,n-2,n)}. 

\begin{prp}\label{isomorfia(n-3,n-2,n)} 

Let $\fkg$ be a  filiform Lie algebra associated with the triple $(n-3,n-2,n)$ for $n \geq 7$. Then this algebra is isomorphic to the filiform Lie algebra with non-zero brackets:
$$[e_1,e_h]=e_{h-1}, \,\, h=3,\ldots,n; \,\, [e_{n-3},e_{n}]=e_2; \,\, [e_{n-2},e_{n-1}]=\alpha\,e_2; \,\,$$ 
$$ [e_{n-2},e_n]=(1+\alpha)\,e_3+\beta\,e_2; \,\,[e_{n-1},e_n]=(1+\alpha)\,e_4+\beta\,e_3,$$
where $\alpha, \beta  \in \C$ and $0 \neq \alpha$.
\end{prp}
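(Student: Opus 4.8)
The plan is to put the law of $\fkg$ into the asserted form in two moves: first exploit the structural results of Subsection~\ref{(z1,n-2,n)z1<n-2} for the triple $(z_1,n-2,n)$, and then perform a single explicit change of adapted basis that removes the last redundant parameter.

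Since $z_1(\fkg)=n-3<n-2=z_2(\fkg)$, Proposition~\ref{caso(z_1,n-2,n)} applies with $p=p(\fkg)=\lfloor\frac{n-z_1-1}{2}\rfloor=\lfloor\frac{2}{2}\rfloor=1$. Hence its closed restriction reads $\alpha_1=0$, while the open ones are $\gamma_1\neq 0$ and (since $n-z_1-1=2$) $\alpha_2\neq 0$. Substituting $\alpha_1=0$ into Theorem~\ref{leygeneral} with $(z_1,z_2)=(n-3,n-2)$, and using that $C^3\fkg=\langle e_2,\dots,e_{n-2}\rangle$ is abelian (because $z_2^*=3$; see Remark~\ref{z_2yz_1estrella} and~(\ref{coroderivada})), one finds that the only non-zero brackets of $\fkg$ are $[e_1,e_h]=e_{h-1}$ for $3\le h\le n$, $[e_{n-3},e_n]=\gamma_1 e_2$, $[e_{n-2},e_{n-1}]=\alpha_2 e_2$, $[e_{n-2},e_n]=(\gamma_1+\alpha_2)e_3+\beta_{1,2}e_2$ and $[e_{n-1},e_n]=(\gamma_1+\alpha_2)e_4+\beta_{1,2}e_3+\beta_{2,2}e_2$. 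Now rescale as in Proposition~\ref{cambiobase} with $\lambda=\gamma_1^{-1}$ (cf.\ Corollary~\ref{ley(z_1,n-2-n)normalizada}, Remark~\ref{remarkcambiobase}): this yields an adapted basis $\{f_i\}$ in which $[f_1,f_h]=f_{h-1}$, $[f_{n-3},f_n]=f_2$, $[f_{n-2},f_{n-1}]=\alpha f_2$, $[f_{n-2},f_n]=(1+\alpha)f_3+\beta f_2$, $[f_{n-1},f_n]=(1+\alpha)f_4+\beta f_3+\delta f_2$, where $\alpha=\alpha_2/\gamma_1\neq 0$, $\beta=\beta_{1,2}/\gamma_1$, $\delta=\beta_{2,2}/\gamma_1$, all remaining brackets being zero.

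It remains to kill $\delta$. Set $\mu=\delta/(2\alpha)$ --- legitimate since $\alpha\neq 0$ --- and define $g_i=f_i$ for $1\le i\le 3$ and $g_i=f_i+\mu f_{i-2}$ for $4\le i\le n$. The transition matrix is lower triangular with unit diagonal, so $\{g_i\}$ is a basis, and a short check shows it is adapted. When one recomputes brackets in the $g$-basis, every extra term $\mu[f_a,f_b]$ or $\mu^2[f_a,f_b]$ that could appear vanishes: a term with both indices in $\{2,\dots,n-2\}$ because $C^3\fkg$ is abelian; a term $[f_k,f_n]$ with $k<z_1=n-3$ by definition of $z_1$; and the term $[f_{n-4},f_{n-1}]$ because $[C^2\fkg,C^4\fkg]=\{0\}$, as $z_1^*=4$ (Remark~\ref{nulidad_c2cz1*}, Corollary~\ref{sobre_theta2}). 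Thus $[g_{n-3},g_n]=g_2$, $[g_{n-2},g_{n-1}]=\alpha g_2$ and $[g_{n-2},g_n]=(1+\alpha)g_3+\beta g_2$ are unchanged, whereas $[g_{n-1},g_n]=(1+\alpha)f_4+\beta f_3+\bigl(\delta+\mu(1-\alpha)\bigr)f_2$; comparing with $(1+\alpha)g_4+\beta g_3=(1+\alpha)f_4+\beta f_3+(1+\alpha)\mu f_2$ and using $\delta+\mu(1-\alpha)=(1+\alpha)\mu$ (equivalently $\delta=2\alpha\mu$) gives $[g_{n-1},g_n]=(1+\alpha)g_4+\beta g_3$. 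Relabelling $g_i$ as $e_i$ produces exactly the law in the statement, with $\alpha\neq 0$.

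The only point requiring care is this last step. One must choose $\mu$ so that the shift cancels the coefficient of $f_2$ in $[f_{n-1},f_n]$ without perturbing the other three distinguished brackets, and the fact that there is no interference rests precisely on $C^3\fkg$ being abelian and on $[C^2\fkg,C^4\fkg]=\{0\}$ (i.e.\ $z_1^*=4$); the solvability for $\mu$ is exactly why the hypothesis $\alpha\neq 0$ (equivalently $\gamma_1\neq 0$ and $\alpha_2\neq 0$) is indispensable.
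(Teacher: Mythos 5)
Your proposal is correct and follows essentially the same route as the paper: normalize via Proposition \ref{caso(z_1,n-2,n)} (so $\alpha_1=0$, $\alpha_2\neq 0$, $\gamma_1\neq 0$), rescale with Proposition \ref{cambiobase}/Remark \ref{remarkcambiobase} to make $\gamma_1=1$, and then apply the shear $e_j\mapsto e_j+\tfrac{\beta_{2,2}}{2\alpha_2}e_{j-2}$ ($j\geq 4$), which is exactly your map $g_i=f_i+\mu f_{i-2}$ with $\mu=\delta/(2\alpha)$. The only difference is presentational: the paper verifies all brackets explicitly, while you justify the vanishing of the cross terms by the structural facts ($C^3\fkg$ abelian, the definition of $z_1$, and $[C^2\fkg,C^4\fkg]=\{0\}$), which is equally valid.
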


\begin{proof}
According to Theorem \ref{leygeneral}, the law of $\fkg$ is given by
$$[e_1,e_h]=e_{h-1}, \,\, \forall 3 \leq h \leq n; \quad
[e_{n-3},e_{n-1}]=\alpha_1\,e_2;\quad
[e_{n-2},e_{n-1}]=\alpha_1\,e_3+\alpha_2\,e_2; \,\,$$ 
$$[e_{n-3},e_{n}]=\alpha_1\, e_3+\gamma_1 \,e_2; \quad
[e_{n-2},e_n]=2\alpha_1\,e_4+(\gamma_1+\alpha_2)\,e_3+\beta_{1,2}\,e_2;$$ 
$$[e_{n-1},e_n]=2\alpha_1\,e_5+(\gamma_1+\alpha_2)\,e_4+\beta_{1,2}\,e_3+\beta_{2,2}\,e_2.$$

From Propositions \ref{cambiobase} and \ref{caso(z_1,n-2,n)} and Remark \ref{remarkcambiobase}, we can assume  $\alpha_1=0, \alpha_2 \neq 0$ and $\gamma_1=1$. Now, we denote by $\fkg_{\alpha_2,1,\beta_{1,2},\beta_{2,2}}$ that filiform Lie algebra.  
Then, this algebra is isomorphic to the filiform Lie algebra $\fkg_{\alpha_2,1,\beta_{1,2},0}$ 
via the Lie algebra isomorphism
$\varphi:\fkg_{\alpha_2,1,\beta_{1,2},\beta_{2,2}} \rightarrow  
\fkg_{\alpha_2,1,\beta_{1,2},0}$ given by 
$$\varphi(e_i)=e'_i=e_i, \quad \forall 1 \leq i \leq 3; \quad 
\varphi(e_j)=e'_j=\frac{\beta_{2,2}}{2\alpha_2} \, e_{j-2}+e_j, \quad \forall 4 \leq j \leq n$$
where $\{e'_h\}_{h=1}^n$ is an adapted basis of 
$\fkg_{\alpha_2,1,\beta_{1,2},0}$. 
First, it is obvious that
$$[e'_2,e'_i]=0=[e'_3,e'_{j}]=[e'_k,e'_{\ell}], \quad \forall \, 1 \leq i \leq n, \,\, \forall \, 2 \leq j \leq n, \quad \forall \, 4 \leq k, \ell \leq n-2;$$
Next,
$$[e'_1,e'_h]=[e_1,e_h]=e_{h-1}=e'_{h-1}, \quad 
[e'_{n-3},e'_{n}]=\left[\frac{\beta_{2,2}}{2\alpha_2}\,e_{n-5}+e_{n-3},
\frac{\beta_{2,2}}{2\alpha_2}\,e_{n-2}+e_{n}\right]=$$ 
$$[e_{n-3},e_n]=e_2=e'_2, \quad [e'_{n-2},e'_{n-1}]=\left[\frac{\beta_{2,2}}{2\alpha_2}\,e_{n-4}+e_{n-2},
\frac{\beta_{2,2}}{2\alpha_2}\,e_{n-3}+e_{n-1}\right]=[e_{n-2},e_{n-1}]=$$
$$\alpha_2\,e_2=\alpha_2\,e'_2; \quad 
[e'_{n-2},e'_{n}]=\left[\frac{\beta_{2,2}}{2\alpha_2}\,e_{n-4}+e_{n-2},
\frac{\beta_{2,2}}{2\alpha_2}\,e_{n-2}+e_{n}\right]=[e_{n-2},e_{n}]=$$
$$(1+\alpha_2)\,e_3+\beta_{1,2}\,e_2=(1+\alpha_2)\,e'_3+\beta_{1,2}\,e'_2;\,  
[e'_{n-1},e'_{n}]=\left[\frac{\beta_{2,2}}{2\alpha_2}\,e_{n-3}+e_{n-1},
\frac{\beta_{2,2}}{2\alpha_2}\,e_{n-2}+e_{n}\right]=$$
$$\frac{\beta_{2,2}}{2\alpha_2}\,[e_{n-3},e_n]-
\frac{\beta_{2,2}}{2\alpha_2}\,[e_{n-2},e_{n-1}]+[e_{n-1},e_n]=
\frac{\beta_{2,2}}{2\alpha_2}\,e_2-
\frac{\beta_{2,2}}{2\alpha_2}\,\alpha_2\,e_2+(1+\alpha_2)\,e_4+\beta_{1,2}\,e_3+\beta_{2,2}\,e_2$$
$$=(1+\alpha_2)(e_4+\frac{\beta_{2,2}}{2\alpha_2}\,e_2)+\beta_{1,2}\,e_3=
(1+\alpha_2)\,e'_4+\beta_{1,2}\,e'_3.$$

\end{proof}

In fact, Proposition \ref{isomorfia(n-3,n-2,n)} can be generalized as follows:

\begin{prp}\label{isomorfia(n-r,n-2,n)}
Let $\fkg$ be a filiform Lie algebra associated with the triple $(n-q,n-2,n)$ for $3 \leq q \leq n-4$ \,and $n\geq 7$. Then this algebra is isomorphic to the filiform Lie algebra with brackets:
$$[e_1,e_h]=e_{h-1}, \,\, \forall 3 \leq h \leq n; \,\,[e_{n-q+i},e_{n-1}]=\alpha_2\,e_{i+1}+\ldots+\alpha_{i+1}\,e_2, \,\,\forall \,\, 0\leq i \leq q-2;$$ 
$$ [e_{n-q},e_{n}]=e_2; \,\, 
[e_{n-q+k},e_n]= \sum_{h=2}^{k+2} P_h\left([e_{n-q+k-1},e_{n}] +[e_{n-q+k},e_{n-1}]\right) e_{h+1}+ \beta_{k,2} \, e_2, \,\, \forall \, 1 \leq k \leq q-2;$$
$$[e_{n-1},e_n]= \sum_{h=2}^{q+1} P_h\left([e_{n-2},e_{n}]\right) e_{h+1}, \quad {\rm where} \,\, (\alpha_2,\ldots,\alpha_{q-1}) \neq (0, \ldots, 0).$$
\end{prp}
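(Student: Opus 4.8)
The plan is to follow the pattern of Proposition \ref{isomorfia(n-3,n-2,n)}, which is the case $q=3$: first normalize the law of Theorem \ref{leygeneral} by the basis changes provided by Proposition \ref{cambiobase}, thereby reducing it to the displayed law up to a single remaining structure constant, and then remove that constant by an explicit shear.

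\emph{Normalization.} Since $q\geq 3$ we have $z_1=n-q<n-2=z_2$, so Proposition \ref{caso(z_1,n-2,n)} applies: relative to a suitable adapted basis $\{e_i\}$ we may assume $\alpha_1=\dots=\alpha_p=0$ with $p=\lfloor (q-1)/2\rfloor$ (Notation \ref{p}), $\gamma_1\neq 0$, and $(\alpha_{p+1},\dots,\alpha_{q-1})\neq(0,\dots,0)$; in particular $\alpha_1=0$ and $(\alpha_2,\dots,\alpha_{q-1})\neq(0,\dots,0)$. By Corollary \ref{ley(z_1,n-2-n)normalizada} (that is, Proposition \ref{cambiobase} with $\lambda=\gamma_1^{-1}$) we may further assume $\gamma_1=1$. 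Writing out the law of Theorem \ref{leygeneral} for the triple $(n-q,n-2,n)$ with these normalizations, the brackets $[e_{n-q+i},e_{n-1}]$ for $0\leq i\leq q-2$, $[e_{n-q},e_n]$, and $[e_{n-q+k},e_n]$ for $1\leq k\leq q-2$ are exactly those displayed in the statement; and since $[e_{n-1},e_n]=[e_{z_1+q-1},e_{z_2+2}]$ with $[e_{z_1+q-1},e_{z_2+1}]=[e_{n-1},e_{n-1}]=0$, Theorem \ref{leygeneral} gives $[e_{n-1},e_n]=\sum_{h=2}^{q+1}P_h([e_{n-2},e_n])\,e_{h+1}+\beta_{q-1,2}\,e_2$. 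Thus the statement reduces to exhibiting an isomorphism that annihilates the single coefficient $\beta_{q-1,2}$.

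\emph{The shear.} Since $z_2=n-2$, the ideal $C^3\fkg=\langle e_2,\dots,e_{n-2}\rangle$ is abelian (Remark \ref{z_2yz_1estrella}). I would look for $\varphi$ of the form $\varphi(e_i)=e_i$ for $i<T$ and $\varphi(e_j)=e_j+c\,e_{j-d}$ for $j\geq T$, with $d\geq 2$ and $c\in\C$ to be chosen; the requirement that $\{\varphi(e_i)\}$ be again an adapted basis (compatibility with $[e_1,e_h]=e_{h-1}$) forces $T=d+2$. In the expansion of $[\varphi(e_a),\varphi(e_b)]$ the terms $[e_{a-d},e_{b-d}]$ vanish, as both arguments lie in the abelian ideal $C^3\fkg$; a term $[e_{a-d},e_b]$ or $[e_a,e_{b-d}]$ can be nonzero only if the unshifted entry is $e_{n-1}$ or $e_n$, and even then it vanishes when the shifted index lands on $e_{z_1+j}$ with $j\leq r-2$, because $\alpha_1=\dots=\alpha_{r-1}=0$, where $r:=\min\{i:\alpha_i\neq 0\}$, $p+1\leq r\leq q-1$. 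Taking $d=q-r+1$ (which is $d=2$ for $q=3$, recovering the shear of Proposition \ref{isomorfia(n-3,n-2,n)}), one checks by direct computation that $\varphi$ sends $\fkg$ again to an algebra with a law of the form of Theorem \ref{leygeneral} for the triple $(n-q,n-2,n)$, the only nontrivial change being in the top bracket $[e_{n-1},e_n]$: there the cross term $c\,[e_{n-1},e_{n-d}]=-c\,\alpha_r\,e_2$, together with $c\,[e_{n-1-d},e_n]$ and the corrections arising when the remaining summands of $[e_{n-1},e_n]$ are rewritten in the basis $\{\varphi(e_i)\}$, makes the coefficient of $e_2$ equal to $\beta_{q-1,2}$ plus $c$ times a nonzero scalar proportional to $\alpha_r$. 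Choosing $c$ to cancel this coefficient, $\varphi$ maps $\fkg$ onto the algebra of the statement; the inequality $(\alpha_2,\dots,\alpha_{q-1})\neq(0,\dots,0)$ is preserved, since these constants are untouched.

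\emph{Main obstacle.} The delicate step is the shear: one must calibrate the shift $d$ to the location $r$ of the first nonvanishing $\alpha_r$ --- which itself depends on the parity of $q$ through Proposition \ref{caso(z_1,n-2,n)} --- and then carry out, case by case on the indices, the verification that $\varphi$ preserves every bracket except the top one and that the coefficient multiplying $c$ in the new $\beta_{q-1,2}$ is nonzero. This is a lengthy but essentially routine bookkeeping, parallel to the explicit computation in the proof of Proposition \ref{isomorfia(n-3,n-2,n)}, and it relies throughout on the abelianness of $C^3\fkg$ and on the vanishing $\alpha_1=\dots=\alpha_{r-1}=0$.
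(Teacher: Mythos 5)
Your proposal is correct in outline and follows the same two--step strategy as the paper: normalize via Proposition \ref{caso(z_1,n-2,n)} together with Proposition \ref{cambiobase}/Remark \ref{remarkcambiobase} (so $\gamma_1=1$, $\alpha_1=\cdots=\alpha_p=0$, and the only constant left to remove is $\beta_{q-1,2}$ in $[e_{n-1},e_n]$), and then apply one shear $e_j\mapsto e_j+c\,e_{j-d}$ fixing the low indices. The only real divergence is the choice of shear: the paper takes the fixed shift $d=q-1$ with $c=\beta_{q-1,2}/\bigl((q-1)\alpha_2\bigr)$, a verbatim generalization of the $q=3$ map of Proposition \ref{isomorfia(n-3,n-2,n)}, whereas you calibrate $d=q-r+1$ to the first nonvanishing $\alpha_r$. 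The two coincide precisely when $\alpha_2\neq 0$; since Proposition \ref{caso(z_1,n-2,n)} forces $\alpha_2=0$ whenever $p=\lfloor (q-1)/2\rfloor\geq 2$ (so for every $q\geq 5$, and also in the subcase $\alpha_2=0,\ \alpha_3\neq0$ of $q=4$), your calibrated shift is the form of the map that works uniformly in $q$, and the paper's displayed coefficient has to be read with $(q-1)\alpha_2$ replaced by $(q-r+1)\alpha_r$ in those cases (for $q=3$, $r=2$ this is the paper's $2\alpha_2$; for, say, $q=5$, $r=3$ the $e_2$--coefficient picked up by the top bracket is $-3c\,\alpha_3$). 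So your refinement buys robustness rather than a genuinely different proof. One local inaccuracy to fix in the write--up: the assertion that a cross term $[e_{a-d},e_n]$ vanishes whenever $a-d=z_1+j$ with $j\leq r-2$ is false, because brackets against $e_n$ have a leading term coming from $\gamma_1=1$ (e.g.\ $[e_{z_1},e_n]=e_2$); consequently, for $r\geq 3$ the brackets $[\varphi(e_{z_1+k}),\varphi(e_n)]$ with $k\geq d$ do acquire nonzero cross terms, and the constants $\beta_{k,2}$ ($k\leq q-2$) are preserved only because these terms are exactly absorbed when rewriting the leading vector in the new basis (e.g.\ $e_{k+2}=\varphi(e_{k+2})-c\,e_{k+2-d}$). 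This absorption, together with the nonvanishing of the factor multiplying $c$ in the $e_2$--coordinate of $[\varphi(e_{n-1}),\varphi(e_n)]$, is precisely the deferred bookkeeping; it does go through, so your conclusion stands, but the vanishing argument you give for it is not the right reason.
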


\begin{proof}
The proof is similar to the one of Proposition \ref{isomorfia(n-3,n-2,n)}, using again Proposition \ref{caso(z_1,n-2,n)}, but considering the isomorphism 
$$\varphi:\fkg_{\alpha_2,\ldots,\alpha_{q-1},1,\beta_{1,2},\ldots,\beta_{q-1,2}} \longrightarrow  
\fkg_{\alpha_2,\ldots,\alpha_{q-1},1,\beta_{1,2},\ldots,\beta_{q-2,2},0}$$ given by 
$$\varphi(e_i)=e'_i=e_i, \quad \forall 1 \leq i \leq q; \quad 
\varphi(e_j)=e'_j=\frac{\beta_{q-1,2}}{(q-1)\alpha_2} \, e_{j-(q-1)}+e_j, \quad \forall q+1 \leq j \leq n.$$

\end{proof}

\subsection{Hilbert polynomial of a filiform Lie algebra with $z_2(\fkg)=n-2$}\label{hilbert_z2=n-2}

In this section, we compute the Hilbert polynomial associated with a $n$-dimensional filiform Lie algebra with $z_2(\fkg)=n-2$, that is, the algebras that were analyzed in Section \ref{Description}. First, we consider the case $z_1(\fkg)=n-2.$






\begin{prp}\label{Hp(n-2,n-2,n)}
Let $\fkg$ be an $n$-dimensional filiform Lie algebra associated with the triple $(n-2,n-2,n)$. Then, the Hilbert polynomial of $\fkg$ is given by
$$\HP_\fkg=\HP_\fkg^{(0)} + t^2\, s^3 + t^3\, s^2 + t^2\, s^2. $$

\end{prp}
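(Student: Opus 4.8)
The plan is to compute each coefficient $\hp_{\fkg,k,\ell} = \dim[C^k\fkg, C^\ell\fkg]$ of $\HP_\fkg$ directly from the explicit law, using the normalized basis. By Corollary \ref{ley(n-2,n-2,n)normalizada} we may fix an adapted basis $\{f_i\}_{i=1}^n$ in which the only nonzero brackets are $[f_1,f_h]=f_{h-1}$ for $3\le h\le n$, together with $[f_{n-2},f_{n-1}]=f_2$, $[f_{n-2},f_n]=f_3+\gamma' f_2$, and $[f_{n-1},f_n]=f_4+\gamma' f_3+\beta' f_2$. Recall from Equation (\ref{coroderivada}) that $C^k\fkg=\langle f_2,\ldots,f_{k^*}\rangle$ for $2\le k\le n-1$ with $k^*=n+1-k$, that $C^1\fkg=\fkg$ and $C^n\fkg=\{0\}$. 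Since $z_2=n-2$, Remark \ref{z_2yz_1estrella} gives $z_2^*=3$, so $C^3\fkg=\langle f_2,\ldots,f_{n-2}\rangle$ is abelian while $C^2\fkg=\langle f_2,\ldots,f_{n-1}\rangle$ is not.

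First I would record the degree-$\le 2$ part, which is already fixed: $\HP_\fkg^{(0)}$ accounts for all monomials $ts$, $t^k s$, $t s^k$, i.e. for the ideals $[C^k\fkg,\fkg]=C^{k+1}\fkg$ and $[\fkg,C^\ell\fkg]$; this is built into formula (\ref{hp-def}) and needs no further work. So the task reduces to computing $\HP_\fkg^{(2)}=\sum_{k\ge2,\ell\ge2}\hp_{\fkg,k,\ell}t^ks^\ell$. Next I would use the arrow-shape and vanishing facts from Lemma \ref{arrowshape}: $[C^{z_2^*}\fkg,C^{z_2^*}\fkg]=[C^3\fkg,C^3\fkg]=\{0\}$, hence $\hp_{\fkg,k,\ell}=0$ whenever $k\ge3$ and $\ell\ge3$; and by the monotonicity $\hp_{\fkg,k',\ell'}\le\hp_{\fkg,k,\ell}$ for $(k,\ell)\preceq(k',\ell')$, the only monomials with $k,\ell\ge2$ that can be nonzero are those at $(2,2)$, $(2,3)$, $(3,2)$ (since $(2,4)$, $(4,2)$ would force looking at $[C^2\fkg,C^4\fkg]$, which I must show vanishes). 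Lemma \ref{arrowshape} also gives $\hp_{\fkg,2,2}=\hp_{\fkg,3,2}=\hp_{\fkg,2,3}$, so it suffices to compute the single number $\hp_{\fkg,2,2}=\dim[C^2\fkg,C^2\fkg]$ and to verify $[C^2\fkg,C^4\fkg]=\{0\}$.

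Then I would compute $[C^2\fkg,C^2\fkg]=[\langle f_2,\ldots,f_{n-1}\rangle,\langle f_2,\ldots,f_{n-1}\rangle]$. Among all brackets $[f_i,f_j]$ with $2\le i<j\le n-1$, the law shows $[f_i,f_j]=0$ unless $\{i,j\}=\{n-2,n-1\}$, for which $[f_{n-2},f_{n-1}]=f_2\ne0$ (here I use that $n\ge6$, so $n-2\ge4$ and none of these indices equals $1$). Hence $[C^2\fkg,C^2\fkg]=\C\cdot f_2$ has dimension $1$, exactly as already noted in the proof of Proposition \ref{caso(n-2,n-2,n)new}. Similarly $[C^2\fkg,C^4\fkg]=[\langle f_2,\ldots,f_{n-1}\rangle,\langle f_2,\ldots,f_{n-3}\rangle]$; every generator $[f_i,f_j]$ here has $j\le n-3$ and $i\le n-1$, and the law gives $0$ unless possibly $j=n-1$, which is excluded, or a bracket involving index $1$, which is also excluded since both index sets start at $2$ — so this ideal is $\{0\}$, and with it all $\hp_{\fkg,k,\ell}$ for $k\ge2,\ell\ge4$ or $k\ge4,\ell\ge2$ vanish by symmetry and monotonicity. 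Combining, $\HP_\fkg^{(2)}=\hp_{\fkg,2,2}(t^2s^2+t^2s^3+t^3s^2)=t^2s^2+t^2s^3+t^3s^2$, which is the claim.

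I do not expect a serious obstacle here: the argument is essentially bookkeeping with the explicit law plus the structural Lemma \ref{arrowshape}. The one point requiring a little care is making sure the vanishing of $[C^2\fkg,C^4\fkg]$ (equivalently $\theta_2=4$, as in Example \ref{example_z1=z2=n-2}) is handled cleanly so that no stray monomials $t^2s^4$, etc., appear — but this follows immediately from $C^4\fkg=\langle f_2,\ldots,f_{n-3}\rangle$ not containing $f_{n-1}$ together with the fact that the only bracket among $\{f_2,\ldots,f_{n-1}\}$ that is nonzero is $[f_{n-2},f_{n-1}]$. A minor subtlety worth a sentence is that the computation must be invariant under the choice of adapted basis; but since we are computing dimensions of the intrinsically defined ideals $[C^k\fkg,C^\ell\fkg]$, the normalized basis of Corollary \ref{ley(n-2,n-2,n)normalizada} may be used without loss of generality.
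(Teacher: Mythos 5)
Your argument is correct and takes essentially the same route as the paper's proof: everything is reduced to $\dim\,[C^2\fkg,C^2\fkg]=1$ together with the vanishings $[C^3\fkg,C^3\fkg]=[C^2\fkg,C^4\fkg]=\{0\}$, and then Lemma \ref{arrowshape} gives $\hp_{\fkg,2,2}=\hp_{\fkg,2,3}=\hp_{\fkg,3,2}=1$. The only cosmetic differences are that you use the normalized basis of Corollary \ref{ley(n-2,n-2,n)normalizada} and verify $[C^2\fkg,C^4\fkg]=\{0\}$ directly from the law, while the paper works with Proposition \ref{caso(n-2,n-2,n)new} and gets that vanishing from the definition of $z_1$ via Remark \ref{z2*contraz2*-1-and-C2-C(z1*+1)}.
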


\begin{proof} 
We have $z_1^*=z_2^*=3$. Then, by Remark \ref{z2*contraz2*-1-and-C2-C(z1*+1)}, one has $$[C^3\fkg,C^3\fkg]= [C^2\fkg, C^4\fkg]=\{0\}. $$ Moreover, by Proposition 
\ref{caso(n-2,n-2,n)new}, one has  
$$[C^2 \fkg, C^2 \fkg]=[\langle e_2, \ldots, e_{n-1} \rangle,
\langle e_2, \ldots, e_{n-1} \rangle]=\langle [e_{n-2},e_{n-1}] \rangle=\langle \alpha_1 e_2 \rangle \neq \{0\}.$$ In virtue of Lemma \ref{arrowshape}, $[C^2 \fkg, C^3 \fkg]=[C^3 \fkg, C^2 \fkg]=[C^2 \fkg, C^2 \fkg]$ and  $$\hp_{\fkg,2,2}= \hp_{\fkg,2,3} = \hp_{\fkg,3,2}=1.$$ 

\end{proof}

\begin{rmr} \label{hilbertpolynomials(n-2,n-2,n)}
Hilbert polynomials do not allow us to distinguish isomorphism classes within the family  of filiform Lie algebras associated with the triple $(n-2,n-2,n)$. However, see Theorem \ref{isomorfia(n-2,n-2,n)}.
\end{rmr}



\begin{prp}\label{dim-de-C2-C2} 
Let $\fkg$ be an $n$-dimensional filiform Lie algebra associated with the triple $(z_1,n-2,n)$, where $z_1 < n-2$. For $p=\lfloor \frac{n-z_1-1}{2} \rfloor$, we have 
\begin{align*} 
[C^2 \fkg, C^2 \fkg]&=
\langle \alpha_{p+1}\,e_2,\,
\alpha_{p+1}\,e_3+\alpha_{p+2}\,e_2, \ldots,\, 
\alpha_{p+1}\,e_{n-z_1-p} + \cdots + \alpha_{n-z_1-1}\,e_2 \rangle \neq \{0\}.\\
\end{align*}
And, therefore,
$$\dim([C^2\fkg,C^2\fkg])=\begin{cases} 
z_1^{*}-p-2 & \text{if } \, \alpha_{p+1} \neq 0 \\
z_1^{*}-p-3 & \text{if } \, \alpha_{p+1} =0,\, \alpha_{p+2} \neq 0 \\
\vdots & \vdots \\
1 & \text{if } \, \alpha_{p+1} = \alpha_{p+2} = \cdots = \alpha_{n-z_1-2}=0,\, \alpha_{n-z_1-1} \neq 0. 
\end{cases}
$$
\end{prp}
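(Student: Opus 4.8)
The plan is to read off $[C^2\fkg,C^2\fkg]$ directly from the law of $\fkg$ supplied by Theorem \ref{leygeneral} together with the vanishing of structure constants established in Proposition \ref{caso(z_1,n-2,n)}. First I would fix an adapted basis $\{e_i\}_{i=1}^n$ and recall that by Equation (\ref{coroderivada}) one has $C^2\fkg=\langle e_2,\ldots,e_{n-1}\rangle$, while since $z_2(\fkg)=n-2$ the ideal $C^3\fkg=\langle e_2,\ldots,e_{n-2}\rangle$ is abelian (Remark \ref{z_2yz_1estrella}). Hence every bracket of two basis vectors of $C^2\fkg$ lying in $\langle e_2,\ldots,e_{n-2}\rangle\times\langle e_2,\ldots,e_{n-2}\rangle$ vanishes, so by bilinearity $[C^2\fkg,C^2\fkg]$ is the $\C$-span of the vectors $[e_j,e_{n-1}]$ with $2\le j\le n-2$. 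From Theorem \ref{leygeneral} with $z_2=n-2$ (so $z_2+1=n-1$) one gets $[e_j,e_{n-1}]=0$ for $2\le j<z_1$ and $[e_{z_1+i},e_{n-1}]=\alpha_1 e_{i+2}+\alpha_2 e_{i+1}+\cdots+\alpha_{i+1}e_2$ for $0\le i\le n-2-z_1$; thus $[C^2\fkg,C^2\fkg]=\langle [e_{z_1},e_{n-1}],\ldots,[e_{n-2},e_{n-1}]\rangle$.

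Next I would invoke Proposition \ref{caso(z_1,n-2,n)}: its closed restrictions give $\alpha_1=\cdots=\alpha_p=0$ with $p=\lfloor\frac{n-z_1-1}{2}\rfloor$. Substituting, $[e_{z_1+i},e_{n-1}]=0$ for $0\le i\le p-1$, and for $p\le i\le n-2-z_1$ one has $[e_{z_1+i},e_{n-1}]=\alpha_{p+1}e_{i-p+2}+\alpha_{p+2}e_{i-p+1}+\cdots+\alpha_{i+1}e_2$; the case $i=p$ yields $\alpha_{p+1}e_2$ and the case $i=n-2-z_1$ yields $\alpha_{p+1}e_{n-z_1-p}+\cdots+\alpha_{n-z_1-1}e_2$, so the surviving generators are exactly the list displayed in the statement. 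That this span is nonzero is precisely the open restriction $(\alpha_{p+1},\ldots,\alpha_{n-z_1-1})\neq(0,\ldots,0)$, also part of Proposition \ref{caso(z_1,n-2,n)}.

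For the dimension formula I would set $v_i=\alpha_{p+1}e_{i-p+2}+\cdots+\alpha_{i+1}e_2$ for $p\le i\le n-2-z_1$, and let $j\ge 0$ be minimal with $\alpha_{p+j+1}\neq 0$ (it exists by the open restriction, and $0\le j\le n-z_1-p-2$). Then $v_p=\cdots=v_{p+j-1}=0$, whereas $v_{p+j},v_{p+j+1},\ldots,v_{n-2-z_1}$ form an echelon system: $v_{p+j+m}$ has leading term $\alpha_{p+j+1}e_{m+2}$ with all of $v_{p+j},\ldots,v_{p+j+m-1}$ supported in $\langle e_2,\ldots,e_{m+1}\rangle$, so they are linearly independent. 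Counting them gives $\dim[C^2\fkg,C^2\fkg]=(n-2-z_1)-(p+j)+1=z_1^{*}-p-2-j$, which as $j$ runs from $0$ through $n-z_1-p-2$ produces exactly the values $z_1^{*}-p-2,\,z_1^{*}-p-3,\,\ldots,\,1$ paired with the vanishing conditions on $\alpha_{p+1},\alpha_{p+2},\ldots,\alpha_{n-z_1-1}$ listed in the proposition. The argument is essentially bookkeeping; the only points demanding care are the correct re-indexing of the $\alpha$'s after imposing $\alpha_1=\cdots=\alpha_p=0$ and the verification that no brackets other than the $[e_j,e_{n-1}]$ contribute — both of which are settled by the abelianness of $C^3\fkg$ and by Theorem \ref{leygeneral}.
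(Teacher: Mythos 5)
Your proposal is correct and follows essentially the same route as the paper: both read off the generators of $[C^2\fkg,C^2\fkg]$ from the law given by Theorem \ref{leygeneral} after imposing the closed restrictions $\alpha_1=\cdots=\alpha_p=0$ of Proposition \ref{caso(z_1,n-2,n)}, and get non-vanishing from the open restriction $(\alpha_{p+1},\ldots,\alpha_{n-z_1-1})\neq(0,\ldots,0)$. Your explicit echelon count of the dimension is just a spelled-out version of the triangular structure the paper leaves implicit (and uses later in Proposition \ref{saltos-de-altura-1}), so there is no essential difference.
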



\begin{proof}
According to Theorem \ref{leygeneral} and Proposition \ref{caso(z_1,n-2,n)}, the law of $\fkg$ is given by
\begin{equation}\label{law(z_1,n-2,n)_propo3} 
\begin{aligned}  [e_1,e_h]&= e_{h-1}, \mbox{ \rm{ for }} 3 \leq h \leq n; \\
[e_{z_1+i},e_{n-1}]&=\alpha_{p+1} e_{i-p+2}+\cdots+\alpha_{i+1} e_2, \,\, \mbox{ \rm{ for }} p \leq i\leq n-2-z_1; \\
[e_{z_1},e_{n}]&=\gamma_1\, e_2; \\
[e_{z_{1}+k},e_n]&= \sum_{h=2}^{k+2} P_h\left([e_{z_1+k-1},e_n]+[e_{z_{1}+k},e_{n-1}]\right) e_{h+1}+ \beta_{k2} \, e_2, \, {\rm for}\, 1 \leq k < n-z_{1}; 
\end{aligned} 
\end{equation} 
where \begin{equation}\label{alpha-distinto-0}
(\alpha_{p+1}, \ldots, \alpha_{n-z_1-1}) \neq (0, \ldots, 0) 
\end{equation} 
and $\alpha_i,\, \gamma_1,\, \beta_{k,2} \in \C$. Then, the following equality holds
\begin{align*} 
[C^2 \fkg, C^2 \fkg]&=
[\langle e_2, \ldots, e_{n-1} \rangle,
\langle e_2, \ldots, e_{n-1} \rangle]=
\langle \{[e_{z_1+i},e_{n-1}]\,\vert \,  p \leq i \leq n-2-z_1  \} \rangle \\
&=  
\langle \alpha_{p+1}\,e_2,\,
\alpha_{p+1}\,e_3+\alpha_{p+2}\,e_2, \ldots,\, 
\alpha_{p+1}\,e_{n-z_1-p} + \cdots + \alpha_{n-z_1-1}\,e_2 \rangle.
\end{align*}
The previous bracket ideal is non-zero due to inequality (\ref{alpha-distinto-0}).

\end{proof}


\begin{prp}\label{saltos-de-altura-1}
Under the hypothesis of Proposition \ref{dim-de-C2-C2}, we have the following equality
$$\hp_{\fkg,2,\ell+1} = \dim [C^2 \fkg, C^{\ell+1}\fkg] = \dim [C^2 \fkg, C^{\ell}\fkg]-1= \hp_{\fkg,2,\ell}-1$$ for $3\leq \ell \leq \theta_2(\fkg)-1$. 
Moreover, $\theta_2(\fkg)= \dim [C^2\fkg,C^2\fkg]+3.$ 

\end{prp}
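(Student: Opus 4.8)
The statement to prove is Proposition \ref{saltos-de-altura-1}: under the hypothesis of Proposition \ref{dim-de-C2-C2} (so $\fkg$ is a filiform Lie algebra associated with the triple $(z_1,n-2,n)$ with $z_1<n-2$), one has $\hp_{\fkg,2,\ell+1}=\dim[C^2\fkg,C^{\ell+1}\fkg]=\dim[C^2\fkg,C^\ell\fkg]-1=\hp_{\fkg,2,\ell}-1$ for $3\le\ell\le\theta_2(\fkg)-1$, and $\theta_2(\fkg)=\dim[C^2\fkg,C^2\fkg]+3$. The natural approach is to compute the ideals $[C^2\fkg,C^\ell\fkg]$ directly from the normalized law given in Corollary \ref{ley(z_1,n-2-n)normalizada} (equivalently, from the law (\ref{law(z_1,n-2,n)_propo3}) in the proof of Proposition \ref{dim-de-C2-C2}), using the description $C^\ell\fkg=\langle e_2,\ldots,e_{\ell^*}\rangle$ from (\ref{coroderivada}), and to exhibit explicitly a decreasing chain of subspaces whose dimensions drop by exactly one at each step.

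First I would fix an adapted basis realizing the normalized law. Then $C^2\fkg=\langle e_2,\ldots,e_{n-1}\rangle$ and $C^\ell\fkg=\langle e_2,\ldots,e_{\ell^*}\rangle$ with $\ell^*=n+1-\ell$. Computing $[C^2\fkg,C^\ell\fkg]$ amounts to taking all brackets $[e_a,e_b]$ with $2\le a,b\le n-1$ and $b\le\ell^*$ (say), and the only nonzero ones come from the rows $[e_{z_1+i},e_{n-1}]$ of the law, since $[e_{z_1+k},e_n]$ involves $e_n$ which only enters when $\ell=2$. Concretely, for $\ell\ge3$ one has $\ell^*\le n-2$, so the only surviving generators are those $[e_{z_1+i},e_{n-1}]$ with $z_1+i\le\ell^*$, i.e. $i\le\ell^*-z_1$. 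By Proposition \ref{dim-de-C2-C2}, $[e_{z_1+i},e_{n-1}]=\alpha_{p+1}e_{i-p+2}+\cdots+\alpha_{i+1}e_2$ for $p\le i\le n-2-z_1$, and these vectors are in "triangular" position: the leading term of $[e_{z_1+i},e_{n-1}]$ is (with $\alpha_{p+1}\ne0$, after the normalization one may take it $=1$) $e_{i-p+2}$, with the index strictly increasing in $i$. Hence the family $\{[e_{z_1+i},e_{n-1}]:p\le i\le m\}$ is linearly independent for any $m$ in range, and $\dim[C^2\fkg,C^{\ell}\fkg]$ counts exactly how many such generators have $z_1+i\le\ell^*$; passing from $\ell$ to $\ell+1$ decreases $\ell^*$ by one, which removes at most one generator (the one with $z_1+i=\ell^*$), and removes exactly one as long as that generator is nonzero and genuinely present, i.e. as long as $[C^2\fkg,C^{\ell+1}\fkg]\ne[C^2\fkg,C^\ell\fkg]$, which by definition of $\theta_2$ is precisely the range $3\le\ell\le\theta_2-1$. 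This gives the chain of equalities. For the last formula, set $\ell=2$: $\dim[C^2\fkg,C^2\fkg]$ is the count of all generators with $p\le i\le n-2-z_1$, while for $\ell=3$ we lose the generators with $z_1+i=n-1$ (there is essentially one "extra" index-range adjustment passing from $C^2$ to $C^3$ because $\ell^*$ jumps from $n-1$ to $n-2$ only and also the $e_n$-brackets disappear). Iterating the drop-by-one relation from $\ell=3$ up to $\ell=\theta_2-1$, where $[C^2\fkg,C^{\theta_2}\fkg]=\{0\}$ by definition of $\theta_2$, yields $0=\dim[C^2\fkg,C^3\fkg]-(\theta_2-3)$, and combining with $\dim[C^2\fkg,C^3\fkg]=\dim[C^2\fkg,C^2\fkg]-1$ (the step $\ell=2\to3$, which I would verify separately and carefully) gives $\theta_2=\dim[C^2\fkg,C^2\fkg]+3$.

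The main obstacle I anticipate is the bookkeeping at the boundary, i.e. handling the step from $\ell=2$ to $\ell=3$ correctly and uniformly with the generic step: when $\ell=2$ the bracket $[C^2\fkg,C^2\fkg]$ potentially also sees $[e_{z_1+i},e_n]$ and $[e_{n-1},e_n]$ (these involve $e_n\in C^2\fkg$), whereas for $\ell\ge3$ these disappear, so one must check that $[e_{z_1+i},e_n]$ and $[e_{n-1},e_n]$ do not contribute anything to $[C^2\fkg,C^2\fkg]$ beyond what the $[e_{z_1+i},e_{n-1}]$ already span — this uses the explicit form in (\ref{law(z_1,n-2,n)_propo3}) together with $\gamma_1\ne0$ forcing $e_2\in[C^2\fkg,C^2\fkg]$, and the recursion defining $[e_{z_1+k},e_n]$ shows its span is already contained in $\langle e_2,\ldots\rangle$ spanned by the other brackets. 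Once that boundary case is pinned down, the triangular-leading-term argument makes the uniform drop-by-one transparent, and the identification $\theta_2=\dim[C^2\fkg,C^2\fkg]+3$ follows by telescoping. I would also double-check consistency with Corollary \ref{sobre_theta2}, which gives the upper bound $\theta_2\le z_1^*-p+1$, and with the $\alpha_{p+1}\ne0$ case of Proposition \ref{dim-de-C2-C2} where $\dim[C^2\fkg,C^2\fkg]=z_1^*-p-2$, so that $\theta_2=z_1^*-p+1$ exactly — a useful sanity check that the telescoping count is right.
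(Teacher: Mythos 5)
Your core idea for the range $3\le\ell\le\theta_2-1$ is the right one (compute $[C^2\fkg,C^\ell\fkg]$ from the law as the span of the brackets $[e_{z_1+i},e_{n-1}]$ with $z_1+i\le \ell^*$ and watch how the span shrinks as $\ell^*$ drops), but the step you lean on is not justified. You assume $\alpha_{p+1}\neq 0$ "after normalization", and this is not available: Proposition \ref{cambiobase} only rescales the whole vector $(\underline{\alpha},\underline{\gamma},\underline{\beta})$ by a common $\lambda$ (and Corollary \ref{ley(z_1,n-2-n)normalizada} uses it to set $\gamma_1=1$), so the vanishing of $\alpha_{p+1}$ cannot be normalized away; indeed Proposition \ref{dim-de-C2-C2} shows $\dim[C^2\fkg,C^2\fkg]$ genuinely depends on which $\alpha_{p+j}$ is the first nonzero one. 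When $\alpha_{p+1}=0$ your generators are not linearly independent and the dimension is not the number of generators, and your fallback — "removes exactly one as long as the ideals differ, which by definition of $\theta_2$ is precisely the range $3\le\ell\le\theta_2-1$" — is circular: the definition of $\theta_2$ only says when the ideal first becomes $\{0\}$, not that the chain decreases strictly at every earlier step, and that strict decrease is exactly what the proposition asserts. The paper closes this by exploiting the Toeplitz-triangular structure: the matrix $M_\ell$ of generators is upper triangular with constant diagonals $(\alpha_{p+k},\ldots,\alpha_{p+k})$, $M_{\ell+1}$ is $M_\ell$ with the last row and column deleted, and such a matrix loses exactly one unit of rank under that deletion whenever its rank is positive (equivalently: if $\alpha_{p+s}$ is the first nonzero parameter, the nonzero generators are triangular with leading coefficient $\alpha_{p+s}$ and strictly increasing leading index, so the top one is never in the span of the rest).

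The "Moreover" part is also broken as you argue it. You treat $e_n$ as an element of $C^2\fkg$, but by (\ref{coroderivada}) one has $C^2\fkg=\langle e_2,\ldots,e_{n-1}\rangle$, so the brackets $[e_{z_1+k},e_n]$ and $[e_{n-1},e_n]$ never contribute to $[C^2\fkg,C^\ell\fkg]$ for any $\ell\ge 2$; your anticipated "boundary obstacle" does not exist, and your claim $\dim[C^2\fkg,C^3\fkg]=\dim[C^2\fkg,C^2\fkg]-1$ is false — Lemma \ref{arrowshape} gives $[C^2\fkg,C^2\fkg]=[C^2\fkg,C^3\fkg]$. With your claimed extra drop, the telescoping $0=\dim[C^2\fkg,C^3\fkg]-(\theta_2-3)$ would yield $\theta_2=\dim[C^2\fkg,C^2\fkg]+2$, contradicting the very formula you state (and your own sanity check against Corollary \ref{sobre_theta2}). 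The correct derivation telescopes the drop-by-one relation from $\ell=3$ to $\ell=\theta_2$ to get $\dim[C^2\fkg,C^3\fkg]=\theta_2-3$ and then uses the equality $[C^2\fkg,C^2\fkg]=[C^2\fkg,C^3\fkg]$, giving $\theta_2=\dim[C^2\fkg,C^2\fkg]+3$.
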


\begin{proof}  
The law of $\fkg$ is given by (\ref{law(z_1,n-2,n)_propo3}). Then, the following equalities hold:
\begin{align*}
[C^2 \fkg, C^{\ell} \fkg]&=[\langle e_2, \ldots, e_{n-1} \rangle,
\langle e_2, \ldots, e_{\ell^*} \rangle]\\
&=\langle \{[e_{z_1+i},e_{n-1}]\,\vert \,  p \leq i \leq z_1^{*}-\ell=\ell^*-z_1 \} \rangle \\
&=\langle \alpha_{p+1}\,e_2,\,
\alpha_{p+1}\,e_3+\alpha_{p+2}\,e_2, \ldots,\, 
\alpha_{p+1}\,e_{z_1^{*}-p-\ell+2} + \cdots + \alpha_{z_1^{*}-\ell+1}\,e_2 \rangle,
\end{align*}
for $3\leq \ell \leq  \theta_2(\fkg)-1$.


Let us write $m=z_1^*-\ell-p$. Let $M_\ell$ be the $m\times m$ matrix whose columns are the coordinates, in the basis $\{e_1,\ldots,e_n\}$,  of the vectors that generate $[C^2\fkg,C^\ell\fkg]$. Notice that the matrix $M_\ell$ is upper triangular and that its rank is precisely $\hp_{\fkg,2,\ell} = \dim \,[C^2\fkg, C^\ell \fkg].$
Moreover, the $k$-th diagonal of $M_{\ell}$ is precisely the vector $(\alpha_{p+k}, \ldots, \alpha_{p+k})$ with $m-k+1$ components, for $k=1,\ldots, m.$ 

The matrix $M_{\ell+1}$ is the principal submatrix of $M_\ell$ obtained by removing exactly the last row and column. Then $\rank\, M_{\ell} = \rank\, M_{\ell+1}+1$.

\end{proof}





Now, we show the main result of this section, where we compute the Hilbert polynomial of filiform Lie algebras under the condition stated in Section \ref{(z1,n-2,n)z1<n-2}, that is, $z_1(\fkg) < z_2(\fkg)=\dim \fkg -2 = n-2$.

\begin{thr}\label{Hp(z_1,n-2,n)}
Let $\fkg$ be an $n$-dimensional filiform Lie algebra associated with the triple $(z_1,n-2,n)$, where $z_1 < n-2$. Then, the Hilbert polynomial of $\fkg$ is given by
$$\HP_\fkg=\HP_\fkg^{(0)} +\dim\,[C^2\fkg,C^2\fkg]\,(t^2\, s^2+t^2 \,s^3+t^3 \,s^2) + \sum_{\ell=4}^{ z_1^{*}-p} \dim\,[C^2\fkg,C^\ell\fkg]\,(t^2 \,s^\ell+t^\ell \,s^2),$$ 
where $p=\lfloor \frac{n-z_1-1}{2} \rfloor$ and
$$\dim([C^2\fkg,C^{z_1^{*}-p-r}\fkg])=\begin{cases} 
r+1 & \text{if } \, \alpha_{p+1} \neq 0 \\
r & \text{if } \, \alpha_{p+1} =0, \alpha_{p+2} \neq 0 \\
\vdots & \vdots \\
1 & \text{if } \, \alpha_{p+1} =\alpha_{p+2}=\ldots =\alpha_{p+r}=0, \alpha_{p+r+1} \neq 0 \\
0 & \text{if } \, \alpha_{p+1} =\alpha_{p+2}=\ldots =\alpha_{p+r+1}=0
\end{cases}
$$
for $0 \leq r \leq z_1^{*}-p-4$.

\end{thr}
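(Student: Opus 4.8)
The approach is to isolate $\HP_\fkg^{(2)}=\HP_\fkg-\HP_\fkg^{(0)}$ and to show that it only involves the bracket ideals $[C^2\fkg,C^\ell\fkg]$. First I would note that $z_2(\fkg)=n-2$ forces $z_2^{*}=3$, so by Remark \ref{z2*contraz2*-1-and-C2-C(z1*+1)} the ideal $C^3\fkg$ is abelian; hence $[C^k\fkg,C^\ell\fkg]=\{0\}$ whenever $k\geq 3$ and $\ell\geq 3$, and the only monomials $t^ks^\ell$ with $k,\ell\geq 2$ that can occur in $\HP_\fkg$ are those with $\min(k,\ell)=2$. Since $\HP_\fkg$ is symmetric, this leaves exactly the coefficients $\hp_{\fkg,2,\ell}=\hp_{\fkg,\ell,2}=\dim[C^2\fkg,C^\ell\fkg]$ for $\ell\geq 2$, so that $\HP_\fkg^{(2)}=\hp_{\fkg,2,2}\,t^2s^2+\sum_{\ell\geq 3}\hp_{\fkg,2,\ell}\,(t^2s^\ell+t^\ell s^2)$.

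Next I would invoke Lemma \ref{arrowshape} to collapse $\hp_{\fkg,2,2}=\hp_{\fkg,2,3}=\hp_{\fkg,3,2}$; this gathers the $t^2s^2$, $t^2s^3$ and $t^3s^2$ terms into $\dim[C^2\fkg,C^2\fkg]\,(t^2s^2+t^2s^3+t^3s^2)$, whose coefficient is given explicitly by Proposition \ref{dim-de-C2-C2}, and reduces everything to the coefficients $\hp_{\fkg,2,\ell}=\dim[C^2\fkg,C^\ell\fkg]$ for $\ell\geq 4$. For these I would use the normalized law (\ref{law(z_1,n-2,n)_propo3}) furnished by Proposition \ref{caso(z_1,n-2,n)}. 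Since $C^\ell\fkg=\langle e_2,\dots,e_{\ell^{*}}\rangle$ with $\ell^{*}\leq n-2$ for $\ell\geq 3$, neither $e_{n-1}$ nor $e_n$ lies in $C^\ell\fkg$; the only brackets of basis vectors in $C^2\fkg\times C^\ell\fkg$ that may be nonzero are thus the $[e_{z_1+i},e_{n-1}]$ (the brackets with $e_n$ drop out, those with $e_1$ are not in $C^2\fkg$, and $[e_2,\cdot]=[e_3,\cdot]=0$), and the vanishing $\alpha_1=\dots=\alpha_p=0$ from Proposition \ref{caso(z_1,n-2,n)} kills those with $i<p$. Hence $[C^2\fkg,C^\ell\fkg]=\langle\,[e_{z_1+i},e_{n-1}]:p\leq i\leq z_1^{*}-\ell\,\rangle$, a family of $z_1^{*}-\ell-p+1$ vectors when $\ell\leq z_1^{*}-p$ and empty when $\ell>z_1^{*}-p$; this pins down the upper limit $z_1^{*}-p$ of the sum.

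Finally I would compute each $\dim[C^2\fkg,C^\ell\fkg]$ with $\ell\geq 4$ as the rank of the matrix whose columns are the coordinate vectors of these generators, exactly as in the proofs of Propositions \ref{dim-de-C2-C2} and \ref{saltos-de-altura-1}. Setting $\ell=z_1^{*}-p-r$ and using $\alpha_1=\dots=\alpha_p=0$, this matrix is upper triangular of size $r+1$, its $k$-th diagonal (the one with $r+2-k$ entries) being the constant vector all of whose entries equal $\alpha_{p+k}$, for $k=1,\dots,r+1$. Its rank is $(r+1)-j$, truncated below at $0$, where $j$ denotes the number of leading zero coefficients among $\alpha_{p+1},\alpha_{p+2},\dots$ (that is, $\alpha_{p+1}=\dots=\alpha_{p+j}=0$ and $\alpha_{p+j+1}\neq 0$): if the first $j$ diagonals vanish and the $(j+1)$-st does not, then rows $1,\dots,r+1-j$ carry pivots in columns $j+1,\dots,r+1$ while the remaining rows vanish. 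Reading off $j=0,1,\dots,r+1$ gives the stated piecewise value of $\dim[C^2\fkg,C^{z_1^{*}-p-r}\fkg]$; equivalently one could combine Lemma \ref{arrowshape} with Proposition \ref{saltos-de-altura-1} to see that this dimension decreases by $1$ at each step from $\dim[C^2\fkg,C^2\fkg]$ down to $0$. Substituting these values into $\HP_\fkg^{(2)}$ yields the announced formula for $\HP_\fkg$. I expect no genuine obstacle here; the one point that needs care is the index bookkeeping — in particular checking that the spanning family has exactly $r+1$ members, so that in the generic case $\alpha_{p+1}\neq 0$ the rank comes out as $r+1$ rather than $r$, and that the edge cases $\ell=2,3$ are correctly absorbed into the $(t^2s^2+t^2s^3+t^3s^2)$ block through Lemma \ref{arrowshape} — the substantive work having already been carried out in Propositions \ref{dim-de-C2-C2} and \ref{saltos-de-altura-1}.
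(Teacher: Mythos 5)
Your proposal is correct and follows essentially the same route as the paper: both rest on the normalized law from Proposition \ref{caso(z_1,n-2,n)}, the vanishing of $[C^k\fkg,C^\ell\fkg]$ for $k,\ell\geq 3$ (since $z_2^*=3$), Lemma \ref{arrowshape} for the collapse $\hp_{\fkg,2,2}=\hp_{\fkg,2,3}=\hp_{\fkg,3,2}$, and the explicit generators $[e_{z_1+i},e_{n-1}]$, $p\leq i\leq z_1^*-\ell$, of $[C^2\fkg,C^\ell\fkg]$. Your triangular-matrix rank count reproduces directly the content of Propositions \ref{dim-de-C2-C2} and \ref{saltos-de-altura-1}, which is exactly what the paper's proof invokes (together with the $\theta_2$ bookkeeping), so there is no substantive difference.
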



\begin{proof} According to Corollary \ref{sobre_theta2}, the definition of $\theta_2$ and Proposition \ref{dim-de-C2-C2}, $\dim \,[C^2 \fkg,C^{\theta_2-1} \fkg]=1$. Now, applying Proposition \ref{saltos-de-altura-1} and Lemma \ref{arrowshape}, we conclude that $\dim\,[C^2 \fkg, C^2 \fkg] = \dim\,[C^2 \fkg, C^3 \fkg]=\theta_2-3$. From Proposition  \ref{saltos-de-altura-1} we deduce that the difference between the dimension of two consecutive bracket ideals is one.

Moreover, the law of $\fkg$ is given by (\ref{law(z_1,n-2,n)_propo3}). Since $z_2^{*}=3$, then
$[C^k \fkg, C^{\ell} \fkg]=0$, for $k,\ell \geq 3$. According to Corollary \ref{sobre_theta2} and Remark \ref{nulidad_c2cz1*}, $[C^2 \fkg, C^{z_1^{*}-q} \fkg]=0$, for $0 \leq q \leq p-1$.
\begin{align*} 
[C^2 \fkg, C^{z_1^{*}-p} \fkg]&=[\langle e_2, \ldots, e_{n-1} \rangle,\langle e_2, \ldots, e_{z_1+p} \rangle]=[e_{z_1+p},e_{n-1}]=\langle \alpha_{p+1}e_2\rangle,\\
[C^2 \fkg, C^{z_1^{*}-p-1} \fkg]&=[\langle e_2, \ldots, e_{n-1} \rangle,\langle e_2, \ldots, e_{z_1+p+1} \rangle]=\langle [e_{z_1+p},e_{n-1}],[e_{z_1+p+1},e_{n-1}]  \rangle \\
&= \langle \alpha_{p+1}\,e_2,\alpha_{p+1}\,e_3+\alpha_{p+2}\,e_2 \rangle.
\end{align*}
More generally,
\begin{align*} 
[C^2 \fkg, C^{z_1^{*}-p-r} \fkg]&=[\langle e_2, \ldots, e_{n-1} \rangle,\langle e_2, \ldots, e_{z_1+p+r} \rangle]=\langle \{[e_{z_1+i},e_{n-1}]\,\vert \,  p \leq i \leq p+r  \} \rangle \\
&= \langle \alpha_{p+1}\,e_2,\alpha_{p+1}\,e_3+\alpha_{p+2}\,e_2,\ldots, \alpha_{p+1}e_{r+2} +\ldots + \alpha_{p+r+1}e_2 \rangle,\\
&{\rm for} \,\, 0 \leq r \leq z_1^{*}-p-4. 
\end{align*}
The previous expressions clarify the different cases written in the dimension of $[C^2 \fkg, C^{z_1^{*}-p-r} \fkg]$.





\end{proof}

\begin{crl}\label{theta2_r0}
Let $\fkg$ be a $n$-dimensional filiform Lie algebra associated with the triple $(z_1,n-2,n)$, where $z_1 < n-2$. Then, $\theta_2=z_1^{*}-p-r_0+1$, 
where $$r_0=\min\{r \,\, | \,\, [C^2 \fkg,C^{z_1^{*}-p-r} \fkg] \neq \{0\}, \,\, 0 \leq r \leq z_1^{*}-p-3 \}.$$ 
\end{crl}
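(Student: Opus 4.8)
The plan is to locate $\theta_2$ by reading off where the sequence $\ell\mapsto\dim[C^2\fkg,C^\ell\fkg]$ drops to zero, using the information already obtained in Proposition~\ref{dim-de-C2-C2}, Proposition~\ref{saltos-de-altura-1}, Corollary~\ref{sobre_theta2} and Theorem~\ref{Hp(z_1,n-2,n)}. Since the bracket bifiltration is decreasing (Subsection~\ref{hp_subsection_definition}), the set $\{\ell\geq 2 : [C^2\fkg,C^\ell\fkg]=\{0\}\}$ is upward closed, so by definition $\theta_2$ is its minimum and, for $\ell\geq 2$, one has $[C^2\fkg,C^\ell\fkg]\neq\{0\}$ precisely for $2\leq\ell\leq\theta_2-1$. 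Consequently it is enough to produce an index $\ell_0$ with $[C^2\fkg,C^{\ell_0}\fkg]\neq\{0\}$ and $[C^2\fkg,C^{\ell_0+1}\fkg]=\{0\}$, for then $\theta_2=\ell_0+1$; I will take $\ell_0=z_1^{*}-p-r_0$.

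Before doing so I would check that $r_0$ is well defined, i.e.\ that the set whose minimum defines it is nonempty. Taking $r=z_1^{*}-p-3$ gives the index $z_1^{*}-p-r=3$, and $[C^2\fkg,C^3\fkg]=[C^2\fkg,C^2\fkg]\neq\{0\}$ by Lemma~\ref{arrowshape} together with Proposition~\ref{dim-de-C2-C2}; hence that value of $r$ belongs to the set and $0\leq r_0\leq z_1^{*}-p-3$. Now $[C^2\fkg,C^{z_1^{*}-p-r_0}\fkg]\neq\{0\}$ holds by the very definition of $r_0$, so $\ell_0=z_1^{*}-p-r_0$ has the first required property. For the second property, that $[C^2\fkg,C^{z_1^{*}-p-r_0+1}\fkg]=\{0\}$, I distinguish two cases. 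If $r_0\geq 1$, then $r_0-1$ lies in $\{0,\dots,z_1^{*}-p-3\}$ and is strictly less than $r_0$, so minimality of $r_0$ forces $[C^2\fkg,C^{z_1^{*}-p-(r_0-1)}\fkg]=\{0\}$, which is exactly the claim. If $r_0=0$, then $z_1^{*}-p-r_0+1=z_1^{*}-p+1$, and $[C^2\fkg,C^{z_1^{*}-p+1}\fkg]=\{0\}$ is precisely the first assertion of Corollary~\ref{sobre_theta2} (note that $z_1<n-2$ implies $n-z_1-1\geq 2$, hence $p\geq 1$, so this boundary case is genuinely inside the stated range). In both cases $\theta_2=\ell_0+1=z_1^{*}-p-r_0+1$.

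The whole argument is bookkeeping layered on top of the results of this section, so I do not expect a real obstacle; the only delicate points are the index arithmetic relating $r$ to the exponent $z_1^{*}-p-r$ and the separate treatment of the boundary case $r_0=0$ (where one must invoke Corollary~\ref{sobre_theta2} rather than the minimality of $r_0$). As a consistency check, combining the identity just proved with $\theta_2=\dim[C^2\fkg,C^2\fkg]+3$ from Proposition~\ref{saltos-de-altura-1} gives $\dim[C^2\fkg,C^2\fkg]=z_1^{*}-p-r_0-2$, which agrees with the case analysis of Proposition~\ref{dim-de-C2-C2} once one observes that $r_0$ equals the number of leading vanishing coefficients among $\alpha_{p+1},\alpha_{p+2},\dots$.
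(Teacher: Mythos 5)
Your proof is correct and is essentially the deduction the paper intends (the corollary is left unproved there): you read off $\theta_2$ from the vanishing pattern of $[C^2\fkg,C^\ell\fkg]$ established in Proposition \ref{dim-de-C2-C2}, Corollary \ref{sobre_theta2} and Theorem \ref{Hp(z_1,n-2,n)}, using the decreasing property of the bifiltration, with the boundary case $r_0=0$ correctly handled via Corollary \ref{sobre_theta2}. No gaps.
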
 

\begin{crl}\label{n-z1-p-1-classes}
Hilbert polynomials $\HP$ distinguishes $n-z_1-p-1\geq 1$ isomorphism classes of filiform Lie algebras associated with $(z_1,n-2,n)$ when $z_1 < n-2$.
\end{crl}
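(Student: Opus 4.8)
The plan is to extract, from the explicit formula for $\HP_\fkg$ given in Theorem \ref{Hp(z_1,n-2,n)}, a single integer invariant whose possible values are exactly $\{1,2,\ldots,n-z_1-p-1\}$, and to produce for each such value a filiform Lie algebra with triple $(z_1,n-2,n)$ realizing it. First I would fix the adapted basis and the normalized law supplied by Corollary \ref{ley(z_1,n-2-n)normalizada}, so that the only surviving parameters are $\alpha_{p+1},\ldots,\alpha_{n-z_1-1}$ (subject to the open condition that they are not all zero) together with the $\gamma_1,\beta_{k,2}$ which, as Theorem \ref{Hp(z_1,n-2,n)} shows, do not affect $\HP_\fkg$. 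The key numerical observation is that, by Theorem \ref{Hp(z_1,n-2,n)} and Corollary \ref{theta2_r0}, the polynomial $\HP_\fkg$ is completely determined by $\dim[C^2\fkg,C^2\fkg]$ (equivalently by $\theta_2(\fkg)=z_1^*-p-r_0+1$), because all the lower coefficients $\hp_{\fkg,2,\ell}$ descend by steps of $1$ (Proposition \ref{saltos-de-altura-1}) down to $0$, and all coefficients $\hp_{\fkg,k,\ell}$ with $k,\ell\geq 3$ beyond the arrow vertex vanish since $z_2^*=3$. Thus two algebras in this family have the same Hilbert polynomial if and only if they have the same value of $r_0$, equivalently the same index of the first nonzero $\alpha$ among $\alpha_{p+1},\alpha_{p+2},\ldots$.

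Next I would count the range of this invariant. By Theorem \ref{Hp(z_1,n-2,n)}, writing $r_0$ for the least $r\ge 0$ with $\alpha_{p+r+1}\neq 0$, the admissible values are $r_0\in\{0,1,\ldots,z_1^*-p-3\}$, since the open restriction $(\alpha_{p+1},\ldots,\alpha_{n-z_1-1})\neq(0,\ldots,0)$ forces some $\alpha_{p+r+1}$ with $r\le n-z_1-1-(p+1)=z_1^*-p-3$ to be nonzero (recall $z_1^*=n+1-z_1$, so $z_1^*-p-3=n-z_1-p-2$). Hence $r_0$ takes exactly $z_1^*-p-2=n-z_1-p-1$ distinct values, and correspondingly $\dim[C^2\fkg,C^2\fkg]=z_1^*-p-2-r_0$ ranges over $\{1,2,\ldots,n-z_1-p-1\}$. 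Each value is realized: for a prescribed $r_0$, take the algebra whose law (in the normalized form of Corollary \ref{ley(z_1,n-2-n)normalizada}) has $\alpha_{p+1}=\cdots=\alpha_{p+r_0}=0$, $\alpha_{p+r_0+1}=1$, and all remaining parameters zero; one checks via Theorem \ref{leygeneral} (or directly) that the Jacobi identities hold and the triple is indeed $(z_1,n-2,n)$. These $n-z_1-p-1$ algebras have pairwise distinct Hilbert polynomials, and since the Hilbert polynomial is an isomorphism invariant (Subsection \ref{hp_subsection_definition}), they lie in distinct isomorphism classes. Finally, $n-z_1-p-1\ge 1$ follows from $p=\lfloor\frac{n-z_1-1}{2}\rfloor\le\frac{n-z_1-1}{2}$, which gives $n-z_1-p-1\ge\frac{n-z_1-1}{2}\ge\frac{3}{2}$ since $z_1\le n-2$ would give $\ge 1/2$; more carefully $z_1<n-2$ gives $n-z_1\ge 3$, hence $p\le 1$ when $n-z_1=3$ and in general $n-z_1-p-1\ge\lceil\frac{n-z_1-1}{2}\rceil\ge 1$.

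The main obstacle I anticipate is not the counting but making rigorous the claim that $\HP_\fkg$ depends \emph{only} on $r_0$ and on nothing else in the parameter list — in particular verifying that the $\gamma_1$ and $\beta_{k,2}$ genuinely leave every coefficient $\hp_{\fkg,k,\ell}$ untouched. This is essentially already contained in Theorem \ref{Hp(z_1,n-2,n)} and Proposition \ref{dim-de-C2-C2}, whose dimension formulas involve only the $\alpha_i$, together with the vanishing $[C^k\fkg,C^\ell\fkg]=0$ for $k,\ell\ge 3$ coming from $z_2^*=3$; so the argument is really a matter of assembling those statements and observing that the map $\fkg\mapsto r_0(\fkg)$ both separates the $\HP$-classes listed and takes all $n-z_1-p-1$ values. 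A secondary technical point is confirming that distinct values of $r_0$ force genuinely distinct polynomials: this follows because $\theta_2=z_1^*-p-r_0+1$ is strictly monotone in $r_0$, so the support $\supp(\HP_\fkg)$ (and hence $E^*_\fkg$) already differs, which also re-proves the non-isomorphism via the $E$-invariant of Subsection \ref{theta-vector}.
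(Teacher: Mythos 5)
Your proposal is correct and follows essentially the same route the paper intends: the corollary is stated without a separate proof precisely because it is read off from Theorem \ref{Hp(z_1,n-2,n)} (via Proposition \ref{dim-de-C2-C2} and Proposition \ref{saltos-de-altura-1}), where $\HP_\fkg$ is determined by the index of the first nonzero $\alpha_{p+r+1}$, which takes exactly $z_1^*-p-2=n-z_1-p-1$ values. Your explicit realization of each value of $r_0$ (using that, by Proposition \ref{caso(z_1,n-2,n)}, the only closed restrictions are $\alpha_1=\cdots=\alpha_p=0$, so any admissible parameter choice does define such an algebra) is a worthwhile elaboration of a point the paper leaves implicit, and the rest of your argument matches the paper's.
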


\section{Some sporadic examples of Hilbert polynomials}\label{sporadicHP}

In this section, we provide three examples of Hilbert polynomials that, in some cases,  allow us to distinguish between different isomorphism classes of filiform Lie algebras, or FLA,  associated with a fixed triple with $z_2(\fkg)=n-3$, for $n=8,9$, and $10$. Recall that we have denoted $\HP_\fkg^{(2)} = \HP_\fkg - \HP_\fkg^{(0)}$, see notation \ref{coeff-hp-and-k*}.

\subsection{$\HP$ of FLA associated with $(4,5,8)$} \label{(4,5,8)}

Let $\fkg$ be a filiform Lie algebra associated with the triple $(4,5,8)$. We have $z_1^*(\fkg)=5$, and $z_2^*(\fkg)=4$.
According to Theorem \ref{leygeneral}, the law of $\fkg$ is given, beside  by
\begin{align*} 
[e_1,e_h]&= e_{h-1} \mbox{ \rm{ for }} 3 \leq h \leq 8, \\
[e_{4+i},e_{6}]&=\alpha_1 e_{i+2}+\alpha_2 e_{i+1}+\cdots+\alpha_{i+1} e_2 \,\, \mbox{ \rm{ for }} 0 \leq i\leq 1, \\
[e_{4},e_{5+j}]&=\alpha_1 e_{j+1}+\gamma_1\, e_j+ \cdots+\gamma_{j-1}\,e_2 \,\,  \mbox{ \rm{ for }} 2 \leq j \leq 3,  \\
[e_{4+k},e_{5+\ell}]&= \sum_{h=2}^{k+\ell} P_h\left([e_{4+k-1},e_{5+\ell}] +[e_{4+k},e_{5+\ell-1}]\right) e_{h+1}+ \beta_{k \ell} \, e_2, \\ &\phantom{= }{\mbox{\rm{\ for }}} 2 \leq \ell \leq 3,\,\,  1 \leq k \leq \ell,
\end{align*} for certain vector of complex numbers $(\underline{\alpha},\underline{\gamma},\underline{\beta}) \in \C^2\times \C^2 \times \C^5=\C^9$.

After imposing the Jacobi identities $J(e_{q},e_r,e_u)=0$, for $4 \leq q < r < u \leq 8$, we obtain the following restrictions: $$\{\alpha_1=0,\alpha_2=-\gamma_1, \gamma_2=-\frac{5}{2}\beta_{12}, \gamma_1 \neq 0\}.$$ We denote $U$ the Zariski locally closed set in $\C^9$ defined by these restrictions and we identify each point of $U$ with its associated filiform Lie algebra. 
Since $z_2^{*}=4$, then $[C^4 \fkg,C^4 \fkg]=\{0\}$. Now, immediate computations show that 
$$ [C^3 \fkg,C^3 \fkg]=[C^3 \fkg,C^4 \fkg]=\langle e_2 \rangle, \quad    
[C^3 \fkg,C^5 \fkg] \subset C^8\fkg \,=\{0\}, 
$$ 
and so, $\theta_3(\fkg)=5.$ 

Moreover, 
\begin{align*}
[C^2 \fkg, C^2 \fkg] &= [C^2 \fkg, C^3 \fkg] = \langle e_2, \,\beta_{12} e_3\rangle, & 
[C^2 \fkg, C^4 \fkg]&=\langle e_2 \rangle, \\ [C^2 \fkg,C^5 \fkg]&=\langle e_2 \rangle, & [C^2 \fkg, C^6 \fkg]&=\{0\}.
\end{align*}
In particular, $\theta_2(\fkg)=6$. Moreover, $\hp_{\fkg,2,2}=\dim\,[C^2 \fkg,C^2 \fkg]$ depends on the vanishing of the parameter $\beta_{1,2}$.

Let us denote by $U'$ the Zariski open subset of $U$ defined by $\beta_{12}\neq 0$ and write $Z':=U\setminus U'.$ 

Then, $\hp_{\fkg,2,2}=2$ if $\fkg \in U'$ and $\hp_{\fkg,2,2} =1$ if $\fkg \in Z'$.   

So, if $\fkg\in U'$ then $$\HP_\fkg^{(2)}(t,s) = 2 (t^2s^2+t^2s^3+t^3s^2) + (t^2s^4+t^3s^3+t^4s^2) + (t^2s^5+t^3s^4+t^3s^4+t^5s^2),$$ while for $\fkg\in Z'$ one has    
$$\HP_\fkg^{(2)}(t,s) =  (t^2s^2+t^2s^3+t^3s^2) + (t^2s^4+t^3s^3+t^4s^2) + (t^2s^5+t^3s^4+t^3s^4+t^5s^2).$$

This means that the Hilbert polynomial identifies two different isomorphism classes of filiform Lie algebras associated with the triple $(4,5,8)$. These two classes cannot be distinguished by the $E$-invariant (see Section \ref{theta-vector}) or the numerical invariants $z_1$ and $z_2$.

\subsection{$\HP$ for FLA associated with $(5,6,9)$}\label{(5,6,9)}
Let $\fkg$ be a filiform Lie algebra associated with the triple $(5,6,9)$. We have $z_1^*(\fkg)=5$, and $z_2^*(\fkg)=4$.  According to Theorem \ref{leygeneral}, the law of $\fkg$ is given by
\begin{align*} [e_1,e_h]&= e_{h-1} \mbox{ \rm{ for }} 3 \leq h \leq 9, \\
	[e_{5+i},e_{7}]&=\alpha_1 e_{i+2}+\alpha_2 e_{i+1}+\cdots+\alpha_{i+1} e_2 \,\, \mbox{ \rm{ for }} 0 \leq i\leq 1, \\
	[e_{5},e_{6+j}]&=\alpha_1 e_{j+1}+\gamma_1\, e_j+ \cdots+\gamma_{j-1}\,e_2 \,\,  \mbox{ \rm{ for }} 2 \leq j \leq 3,  \\
	[e_{5+k},e_{6+\ell}]&= \sum_{h=2}^{k+\ell} P_h\left([e_{5+k-1},e_{6+\ell}] +[e_{5+k},e_{6+\ell-1}]\right) e_{h+1}+ \beta_{k \ell} \, e_2, \\ &\phantom{= }{\mbox{\rm{\ for }}} 2 \leq \ell \leq 3,\,\,  1 \leq k \leq \ell,
\end{align*}	for certain vector of complex numbers $(\underline{\alpha},\underline{\gamma},\underline{\beta}) \in \C^2\times \C^2 \times \C^5=\C^9.$
After imposing the Jacobi identities $J(e_{q},e_r,e_u)=0$, for $5 \leq q < r < u \leq 9$, we obtain the following family of restrictions:
$$U \equiv \{\alpha_1=0,\,(2\alpha_2+{3}\gamma_1)(\alpha_2-\gamma_1)=0, \,\gamma_1 \neq 0\}.$$ 
We notice that $U$ is a Zariski locally closed set in $\C^9$ and we identify each point of $U$ with its associated filiform Lie algebra. 
For $ \fkg \in U$, and since $z_2^*(\fkg)=4$, one has $[C^4\fkg, C^4\fkg]=\{0\}$. Immediate calculation shows 
$$[C^3 \fkg,C^3 \fkg]=[C^3 \fkg,C^4 \fkg]=\langle e_2 \rangle, \quad [C^3 \fkg,C^5 \fkg] =\{0\},$$ 
and then $\theta_3(\fkg)=5.$ 

We also have, 
$$[C^2 \fkg,C^6 \fkg]=\{0\},\quad [C^2 \fkg,C^5 \fkg]=\langle e_2\rangle, \quad [C^2 \fkg,C^4 \fkg]=\langle e_2,e_3 \rangle,$$  
and, in particular, $\theta_2(\fkg)=6$. 

Finally, we have 
$$[C^2 \fkg,C^2 \fkg] = [C^2 \fkg,C^3 \fkg] = \langle e_2, \,(\alpha_2+\gamma_1)e_3, \, \beta_{1,2}e_3+ (\alpha_2+\gamma_1)e_4\rangle =\langle e_2,e_3,e_4\rangle,$$ the last equality due to the fact that, for points in $U$, we have $\alpha_2+\gamma_1\neq 0$.


For each $\fkg\in U$ we have $$\HP_\fkg^{(2)} = 3(t^2s^2+t^2s^3+t^3s^2) + 2(t^2s^4+t^4s^2) + (t^2s^5+t^5s^2) + (t^3s^3+t^3s^4+t^4s^3).$$
The Hilbert polynomial does not distinguish non-isomorphic filiform Lie algebras associated with the triple $(5,6,9)$.

\subsection{$\HP$ for FLA associated with $(5,7,10)$}\label{(5,7,10)}

Let $\fkg$ be a filiform Lie algebra associated with the triple $(5,7,10)$. We have $z_1^*(\fkg)=6$, and $z_2^*(\fkg)=4$. 
According to Theorem \ref{leygeneral}, the law of $\fkg$ is given by
\begin{align*} [e_1,e_h]&= e_{h-1} \mbox{ \rm{ for }} 3 \leq h \leq 10, \\
	[e_{5+i},e_8]&=\alpha_1 e_{i+2}+\alpha_2 e_{i+1}+\cdots+\alpha_{i+1} e_2 \,\, \mbox{ \rm{ for }} 0 \leq i\leq 2, \\
	[e_{5},e_{7+j}]&=\alpha_1 e_{j+1}+\gamma_1\, e_j+ \cdots+\gamma_{j-1}\,e_2 \,\,  \mbox{ \rm{ for }} 2 \leq j \leq 3,  \\
	[e_{5+k},e_{7+\ell}]&= \sum_{h=2}^{k+\ell} P_h\left([e_{5+k-1},e_{7+\ell}] +[e_{5+k},e_{7+\ell-1}]\right) e_{h+1}+ \beta_{k \ell} \, e_2, \\ &\phantom{= }{\mbox{\rm{\ for }}} 2 \leq \ell \leq 3,\,\,  1 \leq k \leq  1+\ell,
\end{align*}	
for certain vector of complex numbers $(\underline{\alpha},\underline{\gamma},\underline{\beta}) \in \C^3\times \C^2 \times \C^7 = \C^{12}.$
After imposing the Jacobi identities $J(e_{q},e_r,e_u)$, for $5 \leq q < r < u \leq 10$, we obtain the following three families of restrict ions: 
$$U_1\equiv \{\alpha_1=\alpha_2=\gamma_1=0, \gamma_2 \neq 0, \alpha_3 \neq 0  \},$$
$$U_2\equiv \{ \alpha_1=\gamma_1=0,\alpha_3=-\frac{3}{7}\gamma_2+\frac{2}{7}\beta_{1,2}, \,\gamma_2\neq 0, \,(\alpha_2,\alpha_3) \neq (0,0) \},$$
$$U_3\equiv \{\alpha_1=0,\alpha_3=-\frac{3 \gamma_2}{5}-\beta_{1,2}, \gamma_1=-\alpha_2, \,(\alpha_2,\gamma_2)\neq(0,0), \,(\alpha_2,\alpha_3)\neq(0,0) \}.$$ Notice that each $U_i$ is a Zariski locally closed subset in $\C^{12}$. Write $U_i=Z_i\cap G_i$ where $Z_i$,  (resp. $G_i$) is the obvious Zariski closed set (resp. the obvious open set) in $\C^{12}.$ We can also write $U_i= Z_i \cap G$ where $G$ is the Zariski open set in $\C^{12}$ defined by the following two inequalities: $(\alpha_2,\alpha_3)  \neq (0,0)$, and $(\gamma_1,\gamma_2) \neq (0,0)$. In particular, $U:=U_1 \cup U_2 \cup U_3 = (Z_1 \cup Z_2 \cup Z_3)\cap G$ is also a locally closed set. 
Each point in $U$ is identified with its associated filiform Lie algebra. If $\Gamma \subset \{1,2,3\}$ is not the empty set, we define  $$U_\Gamma = \left(\bigcap_{i\in \Gamma} U_i\right)\setminus \left(\bigcup_{j\in \Gamma^c} U_j\right)$$ where $\Gamma^c$ is the complement of $\Gamma$ in $\{1,2,3\}.$ 

Notice that $U_{\Gamma}=\varnothing$ if $\Gamma$ has two elements. 

We now compute $\HP_\fkg^{(2)}(t,s)$ for $\fkg \in U_\Gamma$ for each $\Gamma$.   

Recall that for $\fkg \in U$ we have $z_2^*(\fkg)=4$. Then, by Lemma \ref{arrowshape}, one has $[C^4\fkg, C^4\fkg]=\{0\}$.  We also  have the following equalities $$[C^3\fkg, C^6\fkg] =\{0\},\, [C^3\fkg, C^5\fkg] = \langle \alpha_2 e_2\rangle, \,  [C^3\fkg, C^4\fkg] =[C^3\fkg, C^3\fkg] = \langle \alpha_2 e_2, \alpha_2e_3 + \alpha_3 e_2\rangle.$$ Moreover, we have  
\begin{align*}
	[C^2\fkg, C^7\fkg]&=\{0\}, \\ [C^2\fkg,C^6\fkg] &= \langle \gamma_1 e_2\rangle, \\ 
	[C^2\fkg,C^5\fkg] &=  \langle \gamma_1 e_2,\,  \alpha_2 e_2, \, (\alpha_2+\gamma_1)e_3 + \beta_{12}e_2\rangle, \\
	[C^2\fkg,C^4\fkg] &= [C^2\fkg,C^5\fkg] + \langle  \alpha_2e_3+\alpha_3e_2, \,(2\alpha_2+\gamma_1)e_4 + (\alpha_3+\beta_{12})e_3  + \beta_{22}e_2\rangle, \\
	[C^2\fkg,C^3\fkg] &= [C^2\fkg,C^2\fkg] = [C^2\fkg,C^4\fkg] + \langle (2\alpha_2+\gamma_1)e_5 + (\alpha_3+\beta_{12})e_4  + \beta_{22}e_3  + \beta_{32}e_2\rangle. 
\end{align*} 
In order to simplify the notation, we encode the coefficients $\hp_{\fkg,k,\ell}$ of the polynomial $\HP_\fkg^{(2)}$ into two vectors: $$\hp_{\fkg,3}:=(\hp_{\fkg,3,5}, \hp_{\fkg,3,4})\quad {\mbox{ and }} \quad \hp_{\fkg,2}:=(\hp_{\fkg,2,6}, \hp_{\fkg,2,5}, \hp_{\fkg,2,4}, \hp_{\fkg,2,3}),$$ having in mind that, by Lemma \ref{arrowshape}, one has $\hp_{\fkg,3,4}=\hp_{\fkg,3,3}$ and $\hp_{\fkg,2,3}=\hp_{\fkg,2,2}.$

For $\fkg \in U_{\{1,2,3\}}$ we have $\hp_{\fkg,3}=(0,1)$ and $\hp_{\fkg,2}=(0,1,2,3)$.

For $\fkg \in U_{\{1\}}$ we have $\hp_{\fkg,3}=(0,1)$ and the value of $\hp_{\fkg,2}$ is given by the following table:

\begin{align*}
	(0,0,2,3) & \quad {\mbox{ if }} \, \beta_{12}=0 \\
	(0,1,1,1) & \quad {\mbox{ if }} \, \beta_{12}\neq 0, \alpha_3+\beta_{12}=0, \beta_{2,2}=0  \\
	(0,1,1,2) & \quad {\mbox{ if }} \, \beta_{12}\neq 0, \alpha_3+\beta_{12}=0, \beta_{2,2} \neq 0  \\
	(0,1,2,3) & \quad {\mbox{ if }} \, \beta_{12}\neq 0, \alpha_3+\beta_{12} \neq 0.  \\
\end{align*}


For $\fkg \in U_{\{2\}}$ we have $\hp_{\fkg,3}=(1,2)$ and $\hp_{\fkg,2}=(0,2,3,4)$. 

For $\fkg \in U_{\{3\}}$ we have $\hp_{\fkg,3}=(1,2)$ and $\hp_{\fkg,2}=(1,1,3,4)$.

\begin{rmr}
	The Hilbert polynomial distinguishes 6 isomorphism classes in the family of filiform Lie algebras associated with the triple $(5,7,10)$.
	
\end{rmr}

\section*{Statements and Declarations}

Both authors have worked equally in every aspect: conceptualization, methodology, validation, formal analysis, investigation, preparation, funding acquisition, project administration and writing.

\noindent The authors declare that there are no data associated to this work.

\noindent The authors declare no conflicts of interests.

\section*{Acknowledgment}
This work has been partially supported by PID2020-117843GB-I00, PID2024-156912N, FEDER and FQM-326. The authors thank Mercedes Rosas for her useful comments and suggestions.


\end{document}